\renewcommand{\sharp}{\#}
\renewcommand{\emptyset}{\varnothing}
\DeclareMathOperator{\codim}{codim}
\DeclareMathOperator{\Gal}{Gal}
\newcommand{\cnbundle}{{\mathcal{O}_{N}}}
\newcommand{\cani}{{\mathcal{O}}}
\newcommand{\torsione}{\zeta}
\newcommand{\acca}{H}
\newcommand{\sotto}{B}
\newcommand{\cVv}{(h(V)+\deg V)}
\newcommand{\cV}{(h(V)+\deg V)[k_\mathrm{tor}(V):k_\mathrm{tor}]}
\newcommand{\cC}{(h(C)+\deg C)[k_\mathrm{tor}(C):k_\mathrm{tor}]}
\newcommand{\elle}{{\mathcal{L}}}
\newcommand{\cuno}{c_1}
\newcommand{\cdue}{c_2}
\newcommand{\cremond}{c_3}
\newcommand{\ctre}{c_4}
\newcommand{\cquattro}{c_5}
\newcommand{\ccinque}{c_6}
\newcommand{\csei}{c_7}
\newcommand{\cod}{\mathrm{cod} \,}
\newcommand{\qe}{\mathbb{Q}}
\renewcommand{\epsilon}{\varepsilon}
\newtheorem{thm}{Theorem}[section]
\newtheorem{con}[thm]{Conjecture}
\newtheorem{propo}[thm]{Proposition}
\newtheorem{lem}[thm]{Lemma}
\newtheorem{cor}[thm]{Corollary}
\newtheorem{D}[thm]{Definition}
\newtheorem*{ZP}{Conjecture 1.1'}
\newcommand{\sbgrpG}{F}
\newcommand{\sbgrpGG}{F}
\newcommand{\sbgrpH}{H_1 }
\title[On torsion anomalous intersections]{On torsion anomalous intersections}
\author{Sara Checcoli}
\author{Francesco Veneziano}
\author{Evelina Viada}
\thanks{All authors are supported by the SNSF}
\keywords{Effective Diophantine Approximation, Height, Anomalous Intersections}
\subjclass[2000]{11G50, 14G40}
\begin{document}

\begin{abstract}
A deep conjecture on torsion anomalous varieties states that if $V$ is a weak-transverse variety in an abelian variety, then the complement $V^{ta}$ of all $V$-torsion anomalous varieties is open and dense in $V$. We prove some cases of this conjecture. We show that the $V$-torsion anomalous varieties of relative codimension one are non-dense in any weak-transverse variety $V$ embedded in a product of elliptic curves with CM. We give explicit uniform bounds in the dependence on $V$. As an immediate consequence we prove the conjecture for $V$ of codimension two in a product of CM elliptic curves. We also point out some implications on the effective Mordell-Lang Conjecture. 

\vspace{1cm}

Una importante congettura sulle variet\`a torsione-anomale afferma che se $V$ \`e una variet\`a debolmente-trasversa in una variet\`a abeliana, allora il complementare $V^{ta}$ di tutte le variet\`a $V$-torsione-anomale  \`e aperto e denso in $V$. In questo articolo dimostriamo alcuni casi della  congettura. In par\-ti\-co\-la\-re, mostriamo che  le variet\`a $V$-torsione-anomale di codimensione relativa uno non sono dense in ogni variet\`a $V$ debolmente trasversa, immersa in un prodotto di curve ellittiche con CM. Inoltre diamo stime esplicite e uniformi nella dipendenza da $V$. Come immediata conseguenza otteniamo la suddetta congettura per $V$  di codimensione due in un prodotto di curve ellittiche CM. Infine, evidenziamo alcune implicazioni sulla Congettura di Mordell-Lang Effettiva.
\end{abstract}

\maketitle

\section{Introduction}
In this article, by \emph{variety} we mean an algebraic variety defined  over the algebraic numbers.
We denote by $G$ a semi-abelian variety defined over a number field $k$ and by $k_{\mathrm{tor}}$ the field of definition of the torsion points of $G$. Let $V$ be a  proper subvariety of $G$.  
The variety $V$  is a \emph{translate},  respectively  a \emph{torsion variety},  if it is a finite  union of translates of  proper algebraic subgroups by  points, respectively  by torsion points.

An irreducible variety $V$  is \emph{transverse}, respectively \emph{weak-transverse}, if  it is not contained in any translate, respectively in any torsion variety.

Of course a torsion variety is in particular a translate, and a transverse variety is weak-transverse. In addition transverse implies non-translate, and weak-transverse implies non-torsion.

\medskip

It is a natural problem to investigate  when a  geometric assumption on $V$ is equivalent to the non-density of some special subsets of $V$. Several classical  conjectures, nowadays theorems, are of this nature. For instance, the Manin-Mumford Conjecture, stating that the torsion points are non-dense in non-torsion subvarieties; the Mordell-Lang Conjecture, saying that the points in a subgroup of finite rank are non-dense in a transverse subvariety;  and the Bogomolov Conjecture, asserting that points of sufficiently small height are non-dense in non-torsion subvarieties.  

\medskip

More recently, new questions of similar type have been raised.  The ``Conjecture on the Intersection with Torsion Varieties" (in short CIT) formulated in the semi-abelian case by Zilber and in Shimura varieties by Pink, predicts that, for a weak-transverse  variety $V$, the intersection of $V$ with the union of all algebraic subgroups of codimension at least $\dim V+1$ is non-dense in $V$, thereby generalising the former Manin-Mumford Conjecture.  In this context one defines
$$S_{r}(V)=V \cap \bigcup_{\codim\, \acca \ge r}\acca,$$ where $r$ is a natural number, and $\acca$ runs over all algebraic subgroups of codimension at least $r$; with this definition the conjecture states: 
\begin{con}[CIT, Conjecture on the Intersection with Torsion Varieties]\label{CIT}
Let $V$ be a weak-transverse variety in a (semi-)abelian variety. Then $S_{\dim V+1}(V)$ is non-dense  in $V$.
\end{con}

One of the remarkable features of the CIT is that it also implies another deep result of modern diophantine geometry: the former Mordell-Lang Conjecture. Indeed, it turns out to be equivalent to the assertion that, for a transverse variety $V$, the intersection of $V$ with the union of all algebraic subgroups of codimension at least $\dim V+1$, translated by points in a subgroup $\Gamma$ of finite rank, is non-dense in $V$.
Mirroring the above definition, one sets
$$S_{r}(V,\Gamma)=V \cap \bigcup_{\codim\, \acca \ge r}\acca+\Gamma,$$ where $r$ is a natural number, $\acca$ runs over all algebraic subgroups of codimension at least $r$ and $H+\Gamma=\{x+\gamma \mid x\in \acca, \gamma \in \Gamma\}$. Then, we have the following reformulation also known as Zilber-Pink Conjecture.
\begin{ZP}[CIT, second version] If $V$ is transverse and $\Gamma$  has finite rank then $S_{\dim V+1}(V, \Gamma)$ is non-dense in $V$.\end{ZP}

This conjecture has been tackled from several points of view, but it has only been  partially answered. For instance it is known for curves in some semi-abelian varieties. A first approach, introduced in 1999 by E.~Bombieri, D.~Masser and U.~Zannier \cite{BMZ_tori} in the multiplicative case for a transverse curve $C$ and trivial $\Gamma$, consists in a two-step argument where one proves that:
\begin{enumerate}
\item The height  of $S_2(C)$ is bounded from above;
\item Bounded height implies bounded degree, hence finiteness by Northcott's theorem.
 \end{enumerate}
 This kind of approach extends to abelian varieties with CM, as recalled later.

\medskip
In higher dimension, several deep problems arise. 
Bombieri, Masser and Zannier \cite{BMZ1} give a new approach to treat this general case. They introduce the notions of anomalous and torsion anomalous subvarieties. In their definitions they always avoid points. For us points can be torsion anomalous, but not anomalous; in particular, $V$-torsion anomalous points are contained in $S_{\dim V+1}(V)$. Our different definition is justified  by the fact that we obtain a  perfect match with the CIT conjecture, as clarified below. 

We have the following definitions.

\begin{D}
An irreducible subvariety $Y$ of $V$ is a \emph{$V$-torsion anomalous} variety if 
\begin{itemize}
\item[-] $Y$ is an irreducible component of $V\cap (B+\zeta)$ with $B+\zeta$ an irreducible  torsion variety;
\item[-]  the dimension of $Y$ is larger than expected, i.e.  $$\codim Y < \codim V +  \codim  B.$$
\end{itemize}

The variety $B+\zeta$ is \emph{minimal} for $Y$ if it satisfies the above conditions and has minimal dimension.
The \emph{relative codimension} of $Y$ is the codimension of $Y$ in its minimal $B+\zeta$.

 We say that $Y$ is a \emph{maximal $V$-torsion anomalous} variety if it is $V$-torsion anomalous and it is not contained in any $V$-torsion anomalous variety of strictly larger dimension.

The complement in $V$ of the union of all $V$-torsion anomalous varieties is denoted by $V^{ta}$.   Clearly $V^{ta}$ is obtained removing from $V$ all maximal $V$-torsion anomalous varieties.  

\end{D}

 Again,  Bombieri, Masser and Zannier use the same notation to indicate the complement of  the union of all $V$-torsion anomalous varieties of positive dimension. 

\medskip

Furthermore, an irreducible variety $Y$ of positive dimension is  \emph{$V$-anomalous} if it is a component of $V\cap (B+p)$ with  $B+p$ an irreducible  translate and in addition $Y$ has dimension larger than expected. The complement in $V$ of the union of all $V$-anomalous varieties is denoted by $V^{oa}.$

 Clearly  points should be excluded from the definition of  $V$-anomalous varieties because otherwise all points would be anomalous, making the notion uninteresting. On the other hand,  as previously remarked, we allow points to be torsion anomalous varieties: they are exactly the torsion anomalous varieties which are not anomalous.

  It may be possible that only some components of $V\cap (B+\zeta)$ are anomalous, so each component has to be treated separately. This justifies the assumption of $Y$ being irreducible. 
  
\medskip

 In the Torsion Openness Conjecture \cite{BMZ1}, Bombieri, Masser and Zannier    conjecture that the complement  of the set of the torsion anomalous varieties of positive dimension is open.  In addition, in the Torsion Finiteness Conjecture, they claim that there are only finitely many maximal  torsion anomalous points. Here we state a slightly stronger conjecture, which includes both their conjectures. In addition, it specifies that $V^{ta}$ is empty exactly when $V$ is not weak-transverse. 
 \begin{con}[Bombieri-Masser-Zannier]
\label{TORAN}
Let $V$ be a weak-transverse variety in a (semi-)abelian variety. Then $V^{ta}$ is a dense open subset of  $V$.
\end{con}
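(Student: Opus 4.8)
The plan is to prove the cases of Conjecture~\ref{TORAN} announced in the abstract by reducing each of them to a \emph{uniform bound on the height and degree} of the maximal $V$-torsion anomalous subvarieties. Once such a bound is available, these varieties lie in a single family of bounded degree (a Chow variety with finitely many components of bounded dimension) and of bounded height, hence form a finite set by Northcott; since $V$ is weak-transverse it is not itself contained in any proper torsion variety, so none of these finitely many proper subvarieties equals $V$, and their complement $V^{ta}$ is a dense open subset of $V$.

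\emph{The translate case is soft.} If $V=H+p$ with $H$ an abelian subvariety, then for any torsion variety $B+\zeta$ and any $x\in V\cap(B+\zeta)$ one has, up to connected components, $V\cap(B+\zeta)=(H\cap B)+x$, so a component $Y$ has $\dim Y=\dim(H\cap B)^{0}$. But non-emptiness forces the image of $p$ in $A/(H+B)^{0}$ to be torsion, and weak-transversality of $V$ then forces $(H+B)^{0}=A$; hence $\dim Y=\dim H+\dim B-\dim A$ is exactly the expected dimension. So a weak-transverse translate has no $V$-torsion anomalous subvarieties at all and $V^{ta}=V$.

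\emph{Relative codimension one in a product of CM elliptic curves.} Let $Y$ be a maximal $V$-torsion anomalous variety of relative codimension one, with minimal torsion variety $B+\zeta$; thus $Y$ is a hypersurface in $B+\zeta$ and the torsion-anomalous inequality forces $\mathrm{codim}\,V\ge 2$. I would first move $Y$ to a hypersurface $\widetilde Y$ of $E^{n}$ ($n=\dim B$) by translating by $\zeta$ and pulling back along an isogeny $E^{n}\to B$ --- available because $E$ has CM --- with effective control of the isogeny degree in terms of $\deg B$; minimality of $B+\zeta$ makes $\widetilde Y$ weak-transverse. If $\widetilde Y$ lies in a proper translate it must equal it, so $Y$ is, up to torsion, a translate and hence lies in the union of the $V$-anomalous varieties, whose non-density (density of $V^{oa}$) is the structure theorem of Bombieri--Masser--Zannier; otherwise $\widetilde Y$ is transverse, possibly, after a further quotient, of positive codimension in some $E^{m}$. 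For the upper bound I would apply Philippon's arithmetic Bézout theorem (the appendix) to the component $Y$ of $V\cap(B+\zeta)$: since $B+\zeta$ is a torsion variety, $h(B+\zeta)=0$ and $\deg(B+\zeta)=\deg B$, so $\deg Y\le\deg V\cdot\deg B$ and $h(Y)\ll\deg B\,(h(V)+\deg V)$. For the lower bound I would use Zhang's inequality to pass from $h(Y)$ and $\deg Y$ to the essential minimum $\widehat\mu(Y)$ (invariant under translation by torsion via the $[m]$-trick, and controlled under the isogeny), together with the effective Bogomolov-type lower bound for the essential minimum of a transverse subvariety of a CM abelian variety (Galateau, David--Philippon type) and a Lehmer-type lower bound for the height of a well-chosen point of $Y$ cut out by an auxiliary algebraic subgroup whose codimension is arranged so that its intersection with $V$ has exactly the excess dimension of $Y$.

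\textbf{The main obstacle} is the balancing of these two bounds: the relative-codimension-one hypothesis keeps the codimension, hence the exponent in the Bogomolov and Lehmer-type bounds, small enough that for $\deg B$ large the lower bound on the relevant heights overtakes the (essentially linear in $\deg B$) upper bound from arithmetic Bézout, forcing $\deg B$, and hence $\deg Y$ and $h(Y)$, to be bounded in terms of $h(V)$, $\deg V$, $N$ and the arithmetic of $E$. Making this \emph{totally effective} requires keeping all the implied constants explicit --- degrees of auxiliary isogenies, the constants in Zhang's and Galateau's inequalities, the error terms in arithmetic Bézout, and the Lehmer-type exponent --- which is the bulk of the work. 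Finally, for $\mathrm{codim}\,V=2$ any $V$-torsion anomalous variety has relative codimension $0$ or $1$: the relative-codimension-one ones are covered above, the relative-codimension-zero ones are torsion cosets contained in $V$, of which there are finitely many maximal ones by the Manin--Mumford theorem; hence the conjecture for such $V$. The same scheme should go through over an arbitrary abelian variety once an effective Bogomolov-type bound of the required shape is available there.
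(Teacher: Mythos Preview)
Your argument for the translate case is essentially the paper's proof of Theorem~\ref{wtrans}, just contraposed: rather than assuming an anomalous $Y$ and deriving a proper torsion coset containing $H+p$, you argue directly that weak-transversality forces $H+B=A$, hence the intersection has the expected dimension. Fine.

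There is, however, a genuine gap in your relative-codimension-one argument. You apply arithmetic B\'ezout to $V\cap(B+\zeta)$ and get $h(Y)\ll(\deg B)(h(V)+\deg V)$, and you plan to balance this against a Bogomolov/Lehmer lower bound. But for relative codimension one the functorial Bogomolov bound (the paper's Theorem~\ref{due}) gives $\mu(Y)\gg (\deg B)^{1-\eta}/(\deg Y)^{1+\eta}$, and Zhang turns this into $(\deg B)^{1-\eta}\ll h(Y)(\deg Y)^{\eta}$. Combining with your linear-in-$\deg B$ upper bound yields $(\deg B)^{1-\eta}\ll(\deg B)(h(V)+\deg V)(\deg Y)^{\eta}$, which gives no bound on $\deg B$ at all. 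The paper's key extra step is Lemma~\ref{cruciale}: using only the first $r_1=\dim V-\dim Y$ of the $r=\codim B$ defining forms of $B+\zeta$, one builds a larger torsion coset $A_0\supseteq B+\zeta$ with $\codim A_0=r_1$ and $\deg A_0\ll(\deg B)^{r_1/r}$; the lemma shows $Y$ is still a \emph{component} of $V\cap A_0$, so arithmetic B\'ezout now gives $h(Y)\ll(h(V)+\deg V)(\deg B)^{r_1/r}$. Since $Y$ is anomalous, $r_1<r$, and in fact $r-r_1=\codim V-1$, so the upper bound is genuinely sublinear in $\deg B$ and the balancing goes through. Your proposal misses this auxiliary-subgroup trick, and without it the inequalities do not close.

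A second gap: for $Y$ a translate you invoke the Bombieri--Masser--Zannier structure theorem on $V^{oa}$, but that is proved in \cite{BMZ1} for tori, not for abelian varieties, and in any case would not give the effective bounds claimed. The paper does something quite different here: when $Y$ is a point it uses Carrizosa's relative Lehmer bound (Theorem~\ref{carri2}) in place of Bogomolov, and when $Y=H+Y_0$ is a positive-dimensional translate it reduces to the point $Y_0\in H^\perp$ via Lemma~\ref{minimoess}, which equates $\mu(H+Y_0)$ with $\hat h(Y_0)$; this is the content of Philippon's appendix (an orthogonality statement for the N\'eron--Tate pairing on $E^N$, not arithmetic B\'ezout as you wrote). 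This trichotomy---non-translate via Bogomolov, point via Lehmer, translate via reduction to a point---is the actual structure of the proof and is not visible in your sketch.
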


In other words we say that there are only finitely many maximal torsion anomalous varieties of any dimension, as proved below. This immediately shows that Conjecture~\ref{TORAN} implies the Torsion Openness and the Torsion
Finiteness Conjectures. We give a proof of this implication in Proposition~\ref{EquivalenzaCongetture}.

For a hypersurface, Conjecture~\ref{TORAN} is clearly true. Indeed, the intersection of an irreducible torsion variety $\sotto+\zeta$ with a hypersurface is either the variety $\sotto+\zeta$ itself or it has the right dimension $\dim \sotto-1$. So the only $V$-torsion anomalous varieties are torsion varieties contained in $V$; but we know by the Manin-Mumford Conjecture that the  maximal torsion varieties contained in $V$ are finitely many. 

\medskip

Among other results, Bombieri, Masser and Zannier in \cite{BMZ1}, Theorem 1.7,  prove  the openness of $V^{ta}$ for an irreducible variety $V$ of codimension $2$ in $\mathbb{G}^n_m$. 
{One of the}  main ingredients in their proof
is a result of Ax consisting in the analogue of Schanuel's Conjecture in fields
of complex power series in several variables.

\bigskip

The conjecture is easily proven for weak-transverse translates in abelian varieties. In this case there are no torsion anomalous varieties, see Proposition \ref{wtrans}. 
 
 \bigskip
 
In this article we split the set of all maximal $V$-torsion anomalous subvarieties $Y$ of $V$ according to their relative codimension. We study in particular those of relative codimension one: these are indeed the most anomalous non-torsion subvarieties of $V$, hence the most likely to be non-dense in $V$. From this point of view, the article is a natural starting point in the study of torsion anomalous subvarieties. We emphasize that, for any relative codimension, an effective generalisation of our main theorem seems to be completely out of reach at the moment. Indeed, this would imply the effective Mordell-Lang Conjecture, which is well known to be a very hard problem. For higher relative codimension, the method generalises only to weaker results, due to the use of a weaker, but essentially optimal, lower bound for the height in the context of the Bogomolov and Lehmer problems. Thus, a generalisation of our method requires stronger conditions on the dimension of the minimal abelian variety for $Y$ and it 
leads, therefore, only to partial results which we are going to study in a work in progress \cite{CVV_progress}.

 The main result of  this paper is the following effective  theorem: 
\begin{thm}\label{codimensionerelativauno}
Let $V$ be a weak-transverse variety in a  product $E^N$ of elliptic curves with CM  defined over a number field $k$. Then the maximal $V$-torsion anomalous varieties $Y$ of relative codimension one are finitely many. In addition  the degrees and normalized heights are bounded as follows. For any positive real $\eta$, there exist  constants depending only on $E^N$ and $\eta$ such that:
\begin{enumerate}
\item\label{mainprima} if $Y$ is not a translate then
\begin{align*} h(Y) &\ll_\eta   (h(V)+ \deg V)^{\frac{N-1}{N-\dim V-1}+\eta},\\
\deg Y &\ll_\eta\deg V (h(V)+ \deg V)^{\frac{\dim V}{N-\dim V-1}+\eta};
\end{align*}
\item\label{mainseconda} if $Y$ is a translate of positive dimension then
\begin{align*} h(Y)&\ll_\eta {(h(V)+\deg V)}^{\frac{N-2}{N-\dim V-1}+\eta}{[k_\mathrm{tor}(V):k_\mathrm{tor}]}^{\frac{\dim V-1}{N-\dim V-1}+\eta},\\
\deg Y&\ll_\eta (\deg V){(\cV)}^{\frac{\dim V-1}{N-\dim V-1}+\eta}.\end{align*}
\item\label{mainterza} if $Y$ is a point then {its N\'eron-Tate height and its degree are bounded as}
\begin{align*}\hat{h}(Y)  &\ll_\eta (h(V)+\deg V)^{\frac{N-1}{N-\dim V-1}+\eta}[k_{\mathrm{tor}}(V):k_{\mathrm{tor}}]^{\frac{\dim V}{N-\dim V-1}+\eta},\\
[\qe(Y):\mathbb{Q}]
&\ll_\eta ({\deg V[k(V):k]})^{\frac{N-1}{N-\dim V -1}+\eta}{(\cV)}^{\frac{\dim V(N-1)}{(N-\dim V-1)^2}+\eta}.
\end{align*}
\end{enumerate}
\end{thm}
 The notations will be made precise in the next sections. {The proof of Theorem~\ref{codimensionerelativauno} is split in 3 sections: part~(1) is proved in  Section~\ref{seztanontraslati}, part~(2) in  Section~\ref{seztadimposi}, part~(3) in  Section~\ref{seztadimzero}.
 
An interesting aspect of the bounds in our result is that the dependence on $V$ is completely explicit and the constants depend only on the dimension, degree and height of the ambient variety and on $\eta$; this kind of uniformity was also implicit but not explicitly stated in \cite{BMZOnUnlikely}.

Those in Theorem \ref{codimensionerelativauno} are only some of the bounds we obtain. 
In addition we bound the degree of the torsion varieties  $B+\zeta$ minimal for the  maximal $V$-torsion anomalous that we consider; also in these bounds the implied constants are uniform in the variety $V$.

Our  method is completely effective. As mentioned above, it is well known that an effective proof of Theorem \ref{codimensionerelativauno} for any relative codimension implies the effective Mordell-Lang Conjecture, a problem outside the reach of the current methods and theories; so the restriction to  relative codimension one is strong, but natural.
In  Section \ref{sezML} we give some implications of our theorem on the effective and quantitative Mordell-Lang Conjecture.\\

Unlike in \cite{BMZ1}, the proof of our main theorem avoids  the use of Ax's result and  is based on the following main ingredients: the deep Zhang inequality; the strong explicit Arithmetic B\'ezout  Theorem by Philippon; a  sharper variant of an  effective result by Galateau on the Bogomolov  Conjecture for non-translates;  the relative Lehmer  bound in CM abelian varieties by Carrizosa for points; an inductive method for translates of positive dimension. The use of a Lehmer-type bound {forces} the assumption of CM. In Section \ref{ingredienti} we recall these mentioned results.

\medskip 

We now present some consequences and applications of our main theorem.  An immediate corollary is  Conjecture \ref{TORAN}  for weak-transverse varieties of codimension $2$, proving the  analogue of Theorem 1.7 of \cite{BMZ1} in a  product of CM elliptic curves. 
\begin{cor}\label{mainuno}
Let $V$ be a weak-transverse variety of codimension $2$ in a  product of elliptic curves with CM.
Then $V^{ta}$ is a dense open subset of $V$. 
\end{cor}
\begin{proof}
 It is sufficient to consider maximal $V$-torsion anomalous varieties. Let $Y$ be a  maximal $V$-torsion anomalous component of $V\cap(B+\zeta)$.

Then, by definition of $V$-torsion anomalous variety, 
$$ \codim Y< \codim V+ \codim B.$$
Equivalently 
$$\dim B-\dim Y<\codim V=2.$$ If  $Y$ has relative codimension zero, then $Y= B+\zeta$ and $Y$ is a component of the closure of the  torsion contained in $V$, which is a proper closed  subset  by the Manin-Mumford Conjecture. If $Y$ has relative codimension one, we apply our main theorem.
\end{proof}
Note that, in this particular case, Theorem~\ref{codimensionerelativauno} provides a bound for the normalized height and degree of the components of $V\setminus V^{ta}$ which are not torsion varieties. Bounds for the torsion components were already known in the literature.
\medskip

\medskip

\bigskip

Conjecture \ref{TORAN} is well known to be related to the CIT (Conjecture \ref{CIT}). We now discuss the implication of our result on the CIT.
By definition, for any  torsion variety  $B+\zeta$, the intersection $V^{ta}\cap (B+\zeta)$ has the right dimension. In particular,  if a point of $V$ lies in some algebraic subgroup of codimension $\geq \dim V +1$, then that point is contained in a $V$-torsion anomalous variety (we would expect empty intersection), and so it does not belong to $V^{ta}$.  Then,  as a further consequence  of our theorem, we obtain an effective version of the following:

\begin{cor}
\label{maindue}
Let $V$ be a  weak-transverse variety of codimension $2$ or a  weak-transverse translate in a   product of elliptic curves with CM. 
 Then $S_{\dim V+1}(V)$ is non-dense in $V$ and its closure is $V \setminus V^{ta}$.

\end{cor}
\begin{proof}
If $\codim (B+\zeta) \ge \dim V+1$, then all components of $V\cap (B+\zeta)$ are torsion anomalous so they do not intersect $V^{ta}$. Therefore  
\[
V^{ta }\cap  \bigcup_{\codim\, \acca \ge \dim V+1}\acca=\emptyset
\]
and $S_{\dim V+1}(V) \subseteq V\setminus V^{ta}.$ By  Proposition \ref{wtrans},  for weak-transverse translates $V^{ta}=V$ and by  Corollary \ref{mainuno}, $V^{ta}$ is an open dense set for $V$ of codimension $2$. 
  That  the closure of $S_{\dim V+1}(V)$ is $V \setminus V^{ta}$ is proven exactly as in \cite{BMZ1}, page 25, for tori. Recall that,  with their definition, points are not torsion anomalous varieties.

\end{proof}
\medskip
Clearly the bounds for the {normalized height and the degree of the} components of $V \setminus V^{ta}$ are the  same as in Theorem \ref{codimensionerelativauno}. As an immediate corollary we obtain:
\begin{cor}
Let $C$ be a weak-transverse curve  in $E^3$ where $E$ is an elliptic curve with CM defined over a number field $k$.   Then
$$S_2(C)=C\setminus C^{ta}$$ is a finite set of cardinality and N\'eron-Tate height effectively bounded. 
 In particular every non-torsion point $Y_0\in S_2(C)$ satisfies
\begin{align*}
 \hat{h}(Y_0) &\ll_\eta (h(C)+\deg C)^{2+\eta}[k_{\mathrm{tor}}(C):k_{\mathrm{tor}}]^{1+\eta},\\
 [k (Y_0):\qe] &\ll_\eta (\deg C (h(C)+\deg C)[k_{\mathrm{tor}}(C):k_{\mathrm{tor}}][k(C):k])^{2+\eta}.
\end{align*}
\end{cor}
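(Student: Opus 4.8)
The plan is to deduce everything from Theorem \ref{maindue} applied to the special case $V=C$ a weak-transverse curve in $E^3$, together with the explicit height and degree bounds of Theorem \ref{codimensionerelativauno}. First I would observe that a curve $C$ in $E^3$ has codimension $2$, so Theorem \ref{maindue} applies directly: $S_2(C)=S_{\dim C+1}(C)$ is non-dense in $C$ and its closure equals $C\setminus C^{ta}$. Since $C$ is an irreducible curve, any proper closed subset is finite, hence $S_2(C)=C\setminus C^{ta}$ is a finite set, and it is already closed. This gives the qualitative part of the statement with no further work.

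For the effective bounds I would specialise the inequalities of Theorem \ref{codimensionerelativauno} to $N=3$ and $\dim V=1$. The maximal $C$-torsion anomalous varieties of relative codimension one are, in this case, precisely the points of $S_2(C)$ that are not torsion (the torsion points contributing to $S_2(C)$ come from the relative codimension zero case and are bounded via Manin--Mumford). Plugging $N=3$, $\dim V=1$ into the exponents $\frac{N-1}{N-1-\dim V}$ and $\frac{\dim V}{N-1-\dim V}$ of Theorem \ref{codimensionerelativauno} yields $\frac{2}{1}=2$ and $\frac{1}{1}=1$ respectively, so for such a point $Y_0$ one gets
\[
h(Y_0)\ll_\eta (h(C)+\deg C)^{2+\eta}[k_{\mathrm{tor}}(C):k_{\mathrm{tor}}]^{1+\eta}.
\]
Since for a point the normalised height $h(Y_0)$ is the N\'eron--Tate height $\hat h(Y_0)$, this is the first displayed bound. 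The cardinality of $S_2(C)$ is finite by the first paragraph; to make it effective one uses the bound on $\deg Y$ (here $Y$ ranges over the minimal torsion subvarieties $B+\zeta$ attached to the anomalous points, whose degrees are bounded in Theorems \ref{tadimzero} and \ref{tadimzero2} as announced in the introduction), combined with B\'ezout, to bound the number of intersection points.

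For the degree bound $[k(Y_0):\qe]$ I would argue as follows: a torsion anomalous point $Y_0$ lies in some minimal torsion coset $B+\zeta$ of bounded degree; writing $Y_0$ as an isolated component of $C\cap(B+\zeta)$, its field of definition is controlled by $[k(C):k]$, $[k(C):k]$ and the degree of $B+\zeta$, together with the field of definition $k_{\mathrm{tor}}(C)$ of the relevant torsion. Multiplying the contributions and reading off the exponent from the effective version of Theorem \ref{maindue} (equivalently, tracking the dependence in the proof of Theorem \ref{codimensionerelativauno}) gives
\[
[k(Y_0):\qe]\ll_\eta \bigl(\deg C\,(h(C)+\deg C)\,[k_{\mathrm{tor}}(C):k_{\mathrm{tor}}]\,[k(C):k]\bigr)^{2+\eta}.
\]
The main obstacle is purely bookkeeping: one must verify that the exponents in Theorem \ref{codimensionerelativauno}, which are stated for general $N$ and $\dim V$, specialise correctly to $N=3$, $\dim V=1$, and that the auxiliary degree bounds on the minimal torsion cosets (Theorems \ref{tadimzero}, \ref{tadimzero2}) feed into a B\'ezout estimate for the cardinality without introducing worse exponents than claimed. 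There is no genuinely new idea needed beyond the theorems already established; the corollary is essentially a clean specialisation, and the only care required is in not losing track of the powers of $[k(C):k]$ and $\deg C$ coming from the field-of-definition estimate for the intersection points.
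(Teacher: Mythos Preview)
Your approach is correct and is exactly what the paper intends: the corollary is stated without proof because it is a direct specialisation of the preceding theorems to $N=3$, $\dim V=1$. Your derivation of the height bound from Theorem~\ref{codimensionerelativauno} with exponents $\frac{N-1}{N-1-\dim V}=2$ and $\frac{\dim V}{N-1-\dim V}=1$ is spot on.

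One clarification worth making: the bound on $[k(Y_0):\qe]$ does not appear in Theorem~\ref{codimensionerelativauno} itself (which only bounds $h(Y)$ and $\deg Y$); it is stated explicitly in Theorem~\ref{tadimzero2}. Plugging $N=3$, $d=1$ into the formula there gives exponents $\frac{d(N-1)}{(N-1-d)^2}=2$ and $\frac{N-1}{N-1-d}=2$, which yields exactly the claimed bound. So rather than ``tracking the dependence in the proof of Theorem~\ref{codimensionerelativauno}'', you can (and should) simply cite Theorem~\ref{tadimzero2} directly for this part. Your paragraph on the field of definition is otherwise a bit loose (and contains a repeated ``$[k(C):k]$, $[k(C):k]$''), but the substance is right.
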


This result points out the novelty of our main theorem. 
It is known since 2003 that $S_2(C)$ is a finite set for weak-transverse curves in powers of CM elliptic curves (see Theorem~1.7 in \cite{RemondViada}). Later on, it was shown that the CM assumption can be removed through the use of a Bogomolov-type bound  instead of a Lehmer-type bound (see \cite{ioant}). However, these results are non-effective as both rely on a non-effective use of the Voj\-ta height inequality to show that the height is bounded from above. This approach was also adapted to the case of weak-transverse curves in multiplicative tori to prove similar results (see \cite{Maurin}). With an independent argument, based on the use of an effective Mordell-Lang Theorem in tori, Bombieri, Habegger, Masser and Zannier \cite{BHMZ} give an
 effective bound for the height of $S_2(C)$ and consequently for its cardinality, for weak-transverse curves in tori.  The first effective bound for the height of $S_2(C)$ is given for transverse curves in a product of elliptic curves in \cite{ioannali}.  So this  corollary is a first example of an effective bound  for the height for  a weak-transverse curve  in abelian varieties.

{In higher  dimensions,  the Bounded Height Conjecture  proved by Habegger (\cite{philipp}, theorem on page~407), together with the Effective Bogomolov Bound by Galateau (\cite{galateau}, Theorem~1.1) and  the {Non-density Theorem} by Viada (\cite{ijnt}, Theorem~1.6)  imply the CIT for varieties  with $V^{oa}$ non-empty embedded in  certain abelian varieties which include all CM abelian varieties.}

An even stronger result involving a subgroup of finite rank $\Gamma$ of $A(\bar{\qe})$ was proved by R{\'e}mond in his series of articles \cite{RemondI}, \cite{RemondII} and \cite{RemondIII}, using a different approach based on the theory of Vojta height inequalities (see Theorem~1.3 of the last article for the non-density statement).

The original formulation of the Bounded Height Theorem in \cite{hab} did not
contain an explicit height bound and the author did not discuss the effectivity of the result (however, in a forthcoming paper, Habegger provides an explicit version of the Bounded Height Theorem for tori). Even if the method  is made effective, his assumption  $V^{oa}\not=\emptyset$ is  stronger than transversality. In this respect, ours is a new effective method in the context of the CIT for weak-transverse varieties. 

 Many are the contributions on the CIT of several authors. For a more extensive list, we refer to the references given in the papers mentioned above. \medskip

\section {Applications to the   Mordell-Lang Conjecture}\label{sezML}\label{effML}
The CIT is well known to have implications on the Mordell-Lang Conjecture.  
 The toric case of this last conjecture has been extensively studied, also in its effective form, by many authors.  An effective version for the toric case can be found in \cite{BiGi}, Theorem 5.4.5, and generalisations in \cite{BGEP}. However a general effective result in abelian varieties is not known.

From our Theorem \ref{codimensionerelativauno}, we obtain a bound for the height of the set of points belonging to a curve $C$ and a group $\Gamma$ of rank one as $\mathrm{End}(E)$-module. In addition we bound the cardinality of $C\cap \Gamma$, improving in some cases the completely general bound by R\'emond 
 (\cite{remond}, Theorem 1.2).
Our bounds are explicit in the  dependence on $C$ and $\Gamma$. 
Our method is  different from the two classical effective methods known in abelian varieties: the method by Chabauty and Coleman, surveyed, for example, in \cite{MP} and the method by Manin and Demjanenko, described in Chapter~5.2 of \cite{serreMWThm}, which are more general but of difficult application.

\medskip

Let $E$ be an elliptic curve defined over the algebraic numbers.

We let $\hat{h}$ be the standard N\'eron-Tate height on $E^N$; if $V$ is a subvariety of $E^N$, we shall denote by $h(V)$ the normalized height of $V$, as defined in \cite{patriceI}.  The height of a set is as usual the supremum of the heights of its points. If $E$ is defined over a field $k$, we  denote by $k_\mathrm{tor}$ the field of definition of all torsion points of $E$. All the constants in the following theorems become explicit if the constant for the Lehmer  type bound of Carrizosa in \cite{carrizosaIMRN} is made explicit.

\begin{cor}
\label{ML1}
Let $C$ be a weak-transverse curve in  $E^N$, with $E$ a CM elliptic curve and $N>2$. Let $k$ be a field of definition for $E$. Let $\Gamma$ be a subgroup of $E^N$ such that the group  generated by its coordinates  is an ${\mathrm{End}}(E)$-module of   rank one. Then, for any positive $\eta$, there {exist constants} $\cuno,\cdue$, depending only on $E^N$ and $\eta$, such that the N\'eron-Tate height of $C\cap \Gamma $ is bounded by
\begin{equation*}
\cuno (h(C)+\deg C)^{\frac{N-1}{N-2}+\eta}[k_{\mathrm{tor}}(C):k_{\mathrm{tor}}]^{\frac{1}{N-2}+\eta}
\end{equation*}
and the number of non-torsion points in $C\cap \Gamma $  is bounded  by
\begin{align*}
 &\cdue  {(\cC)}^{\frac{(N-1)(4N^2-N-4)}{2(N-2)^2}+\eta}\\
&\cdot  (\deg C)^{\frac{2N^3-N^2+N-4}{2(N-2)}+\eta}[k(C):k]^{\frac{N(N-1)(2N+1)}{2(N-2)}+\eta}.
\end{align*}

We also recall that by a result of R{\'e}mond in \cite{remond}, the number of torsion points in $$C\cap \Gamma $$ is bounded by 
\begin{equation*}
 \cremond (\deg C)^{2N^2},
\end{equation*}
where $\cremond$ is a constant depending only on $E^N$ and the fixed embedding.

\end{cor}

\begin{proof}
Let $g$ be an element of infinite order in $\overline\Gamma$, the group generated by all coordinates of any element in $\Gamma$.  Because $\overline\Gamma$ has rank 1, if a point $x=(x_1,\dots , x_N)$ is in $\Gamma$ then there exist $0\neq a_i,b_i \in{\mathrm{End}}(E)$ and  torsion points $\zeta_i\in E$ such that $$a_ix_i=b_i g+\zeta_i, \qquad i=1,\dotsc,N.$$ If all $b_i=0$ then $x$ is a torsion point, thus it has height zero. Otherwise, we can eliminate $g$ {from the $N$ equations above, leaving us with $N-1$ equations} in the $x_i$ which define  a torsion variety $H$ of codimension $N-1$ in $E^N$. Thus, $$ (C\cap \Gamma)\subseteq (C\cap \cup_{\dim H=1}H) \cup (C\cap {\mathrm{Tor}}_{E^N})=S_{N-1}(C)$$ for $H$ ranging over all algebraic subgroup of dimension one. However, if $N>2$ and $C$ is weak-transverse,  any point $x$ in the intersection 
$C\cap H$ is a maximal $C$-torsion anomalous point and, if  $x$ is not torsion it has relative codimension one in $H$. Applying {Theorem \ref{codimensionerelativauno}} we deduce the estimate on the height. The estimate on the cardinality for the points which are not torsion now follows from the bounds in Theorem \ref{tadimzero2}, while the number of torsion points is bounded by Theorem 2.1 in \cite{remond}. 
\end{proof}

If $N=2$, the intersection $C \cap H$ is not torsion anomalous, so we must follow another line. For  this reason, we need the assumption of $C$ being transverse.

\begin{cor}\label{MLdue}
Let $C$ be a transverse curve in  $E^2$ with $E$ a CM  elliptic curve  defined over a number field $k$. Let $\Gamma$ be a subgroup of $E^2$ such that the group $\overline\Gamma$ of its coordinates  is an ${\mathrm{End}}(E)$-module of   rank one,  and let $g\in\overline\Gamma $ be an element of infinite order. Then, for any positive $\eta$ there {exist constants $\ctre,\cquattro$,} depending only on $E^N$ and $\eta$, such that {the N\'eron-Tate height and the cardinality of the set $$C\cap \Gamma $$  are bounded  as}
\begin{align*}
\hat{h}(C\cap \Gamma)  &\leq \ctre[k_\mathrm{tor}(C\times g):k_\mathrm{tor}]^{1+\eta}(h(C)+(\hat{h}(g)+1)\deg C)^{2+\eta}\\
 \sharp(C\cap \Gamma) &\leq \cquattro {([k_\mathrm{tor}(C\times g):k_\mathrm{tor}](h(C)+(\hat{h}(g)+1)\deg C))}^{29+\eta}(\deg C)^{22+\eta}\\
 &\cdot [k(C\times g):k]^{21+\eta}.
\end{align*}
\end{cor}
\begin{proof}
Consider the curve $C'=C\times g$ in $E^3$. Then $C'$ is weak-transverse and  
  $C\cap \Gamma$ is embedded  in $C'\cap \cup_{\dim H=1}H$ with   $\hat{h}(C\cap \Gamma)\leq \hat{h}(C'\cap \cup_{\dim H=1}H)$, just as in the proof of Corollary \ref{ML1}.
However any point $x$ in the intersection $C'\cap H$ is a maximal $C'$-torsion anomalous point of relative codimension   one in $H$.
The bound on the height is obtained by Theorem \ref{codimensionerelativauno} applied to $C'\subseteq E^3$, with $\deg C'=\deg C$ and $$h(C')\leq 2\mu(C')\deg C'=2\mu(C)\deg C+2\hat{h}(g)\deg C\leq 2( h(C)+\hat{h}(g)\deg C),$$ which follows applying both sides of Zhang's inequality, recalled in Section \ref{sezzhang} below. The bound on the cardinality follows from the bounds in Theorem \ref{tadimzero2}.
\end{proof}

The assumptions on the curve are necessary in both corollaries. Indeed for a weak-transverse curve $C$ in $E^2$ the above theorem is not true. Consider the weak-transverse curve $C=E\times g$ with $g$ non-torsion;  for any positive integer $m$, let $\Gamma_m$ be generated by the point $\gamma_m=(mg,g)$. For any $m$, the point $\gamma_m$  belongs to $C$ and, for $m$ which goes to infinity, the height of $\gamma_m$ tends to infinity. Thus there cannot be general bounds independent of $\Gamma$.
This does not happen in higher codimension. The analogue would be $C=E\times g\times g'$ where $g$ and $g'$ are linearly independent to ensure the weak-transversality. But  $C\cap \Gamma$ is empty if $\overline\Gamma$ has rank one.

We also remark that for $C$ weak-transverse, $C\times g$ is not necessarily weak-transverse: let $C=E\times g$ with $g$ non-torsion, then $E\times g\times g$ is contained in the abelian subvariety $x_2=x_3$.

\section{Torsion anomalous varieties: preliminary results }

 In this section we denote by $G$ an abelian variety or a torus and by $\mathrm{Tor}_G$ the set of all torsion points in $G$. In the next statements we consider subvarieties $V$ of $G$.

\subsection{Maximality  and minimality}

To show that the union of all $V$-torsion anomalous varieties is non-dense, we only need to consider maximal ones.
 We recall from the introduction the following definition.
\begin{D} We say that a  $V$-torsion anomalous  variety $Y$ is \emph{maximal}  if it is not contained in any $V$-torsion anomalous  variety of strictly larger dimension. \end{D}
On the other hand, a variety $Y$ can be a component of the intersection of $V$ with different  torsion varieties. We want to choose a  minimal torsion variety which makes $Y$  anomalous. 

 \begin{D} Let $Y$ be a  $V$-torsion anomalous variety.  We say that the irreducible  torsion variety $B+\zeta$ is \emph{minimal} for $Y$ if   $Y$ is an irreducible component of $V\cap (B+\zeta)$, $$\codim Y<\codim V+\codim B,$$ and $B+\zeta$ has minimal dimension among the irreducible  torsion varieties with such properties. \end{D}
 Note that  there is a unique minimal torsion variety for $Y$, and it is contained in every torsion variety containing $Y$.
Indeed if $B+\zeta$ is minimal for $Y$, and $B'+\zeta'$ is any other torsion variety containing $Y$, then also $B''+\zeta''=(B+\zeta)\cap (B'+\zeta')$ is a torsion variety such that $Y$ is a component of $V\cap (B''+\zeta'')$, and $\codim Y < \codim V +  \codim  B''$. Therefore by minimality $\dim(B+\zeta)= \dim(B''+\zeta'')$. But $(B+\zeta)$ is irreducible, thus $B+\zeta=B''+\zeta''$, and therefore $B+\zeta\subseteq B'+\zeta'$.

\subsection{Conjecture~\ref{TORAN} implies the Torsion Openness Conjecture and the Torsion Finiteness Conjecture} 

Here we prove, as stated in the introduction, that Conjecture~\ref{TORAN} implies the Torsion Openness Conjecture and the Torsion Finiteness Conjecture, by showing that it is equivalent to the finiteness of the maximal $V$-torsion
anomalous  varieties.

\begin{propo}\label{EquivalenzaCongetture}
Conjecture~\ref{TORAN} implies that there are finitely many maximal $V$-torsion
anomalous  varieties of any dimension.
\end{propo}

\begin{proof}

Clearly, it is sufficient to prove that every component of $V\setminus V^{ta}$ is $V$-torsion anomalous.

We remark that if $Y \subseteq V$ is a $V$-torsion anomalous subvariety, and $Y\subseteq X \subseteq V$ for some subvariety $X$ of $V$, then $Y$ is $X$-torsion anomalous, because $\cod X\geq\cod V$.

Let now $X$ be a component of $V\setminus V^{ta}$, and let $Y_i$ be the family of maximal $V$-torsion anomalous subvarieties contained in $X$; let us write $B_i+\zeta_i$ for the torsion variety which is minimal for $Y_i$. We shall prove that $X$ is $V$-torsion anomalous.

Let $X_1,\dotsc,X_r$ be the other components of $V\setminus V^{ta}$. Because the $Y_i$ cover $X\setminus\left(\bigcup_{j=1}^r X_j \right)$, if follows that $X^{ta}$ is not dense, and therefore Conjecture~\ref{TORAN} implies that $X$ is contained in a torsion variety; let $H+\xi$ be the minimal such variety.
Notice that all $B_i+\zeta_i$ are contained inside $H+\xi$, otherwise $(B_i+\zeta_i)\cap (H+\xi)\subseteq B_i+\zeta_i$ would be minimal for $Y_i$.

Consider now the abelian variety $H$, and the weak transverse subvariety $X-\xi$. By Conjecture~\ref{TORAN} applied to this new setting, not all the $Y_i-\xi$ can be $(X-\xi)$-torsion anomalous in $H$. This means that for a certain index $i_0$, 
\begin{equation}\label{equivalenza1}
\cod_H (X-\xi)=\dim(B_{i_0}+\zeta_{i_0}-\xi)-\dim(Y_{i_0}-\xi).
\end{equation}
But $Y_{i_0}$ is $V$-torsion anomalous, so
\begin{equation}\label{equivalenza2}
\dim (B_{i_0}+\zeta_{i_0})-\dim Y_{i_0}<\cod V
\end{equation}
Combining \eqref{equivalenza1} and \eqref{equivalenza2} we easily obtain 
\[
\dim(H+\xi)-\dim X < \cod V.
\]
Let now $Z$ be a component of $V\cap (H+\xi)$ containing $X$. Clearly \[\dim(H+\xi)-\dim Z\leq \dim(H+\xi)-\dim X < \cod V,\]
therefore $Z$ is $V$-torsion anomalous and thus contained in $V\setminus V^{ta}$. This implies that $X=Z$, completing the proof.
\end{proof}

\subsection{Relative position of the torsion anomalous varieties}\label{relposTAvar} 

Without loss of generality we can work with a maximal $V$-torsion anomalous $Y$ and its minimal torsion variety $B+\zeta$.
The maximality for $Y$ avoids redundancy and the minimality assures the weak-transversality of $Y$ in $B+\zeta$, as defined below.
The relative position of a $V$-torsion anomalous variety $Y$ in $B+\zeta$  is  determinant and leads to the following natural definition.

\begin{D} An irreducible variety $Y$  is \emph{weak-transverse}  in a torsion variety $B+\zeta$ if  $Y\subseteq (B+\zeta)$  and $Y$ is not contained in any proper torsion subvariety of $B+\zeta$. Similarly $Y$  is \emph{transverse}  in a translate $B+p$ if $Y\subseteq (B+p)$  is not contained in any translate strictly contained in $B+p$. The codimension of $Y$ in $B+\zeta$ is called the \emph{relative codimension of $Y$ in $B+\zeta$}; we simply  say the \emph{relative codimension of $Y$} if $Y$ is $V$-torsion anomalous and $B+\zeta$ is minimal for $Y$. 
\end{D}

Then, we have the following lemma.
 
\begin{lem} 
\label{wt}
Let $Y$ be a maximal $V$-torsion anomalous variety and  let $B+\zeta$  be minimal for $Y$. Then $Y$ is weak-transverse in $B+\zeta$.\end{lem}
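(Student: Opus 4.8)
The plan is to argue by contradiction using the minimality of $B+\zeta$. Suppose $Y$ is \emph{not} weak-transverse in $B+\zeta$. Then by definition there is a proper irreducible torsion subvariety $B'+\zeta' \subsetneq B+\zeta$ containing $Y$; we may take $B'+\zeta'$ to be the smallest torsion subvariety of $B+\zeta$ containing $Y$, so that $\dim B' < \dim B$. The goal is to show that $B'+\zeta'$ still makes $Y$ torsion anomalous, i.e. that it satisfies the inequality $\codim Y < \codim V + \codim B'$, which would contradict the minimality of $B+\zeta$ (since $B'+\zeta'$ has strictly smaller dimension).

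First I would record the dimensional inequality we are given: since $Y$ is $V$-torsion anomalous with minimal torsion variety $B+\zeta$, we have $\codim Y < \codim V + \codim B$, equivalently $\dim B - \dim Y < \codim V$, i.e. $\dim B < \dim Y + \codim V$. Next I would check that $Y$ is in fact a component of $V \cap (B'+\zeta')$: since $Y \subseteq V$ and $Y \subseteq B'+\zeta' \subseteq B+\zeta$, the variety $Y$ is contained in $V \cap (B'+\zeta')$, and because $Y$ is already a (maximal, by irreducibility) component of the larger intersection $V \cap (B+\zeta)$, it must also be a component of the smaller $V \cap (B'+\zeta')$ — any irreducible subvariety of $V\cap(B'+\zeta')$ strictly containing $Y$ would lie in $V\cap(B+\zeta)$ and contradict $Y$ being a component there.

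The heart of the matter is then the inequality $\codim Y < \codim V + \codim B'$. Here I would use that $B'+\zeta'$ is the \emph{smallest} torsion subvariety of $B+\zeta$ containing $Y$: this forces $Y$ to be weak-transverse in $B'+\zeta'$, and in particular $Y$ is not contained in any proper torsion subvariety of $B'+\zeta'$. The key geometric input is an Ax-type / dimension estimate: if $Y$ is weak-transverse in the torsion variety $B'+\zeta'$ of dimension $\dim B'$, then the stabiliser considerations force $\dim B'$ to be controlled by $\dim Y$ — more precisely, one should not be able to fit $Y$ weak-transversely into $B'+\zeta'$ unless $B'+\zeta'$ is not much bigger than needed. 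I would combine this with the inequality $\dim B < \dim Y + \codim V$ from the minimality of $B+\zeta$: since $\dim B' < \dim B < \dim Y + \codim V$, we get $\dim B' < \dim Y + \codim V$, i.e. exactly $\dim B' - \dim Y < \codim V$, which rearranges to $\codim Y < \codim V + \codim B'$. This is precisely the torsion-anomalous inequality for the pair $(Y, B'+\zeta')$.

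Putting it together: $B'+\zeta'$ is an irreducible torsion variety, $Y$ is a component of $V\cap (B'+\zeta')$, the anomalous dimension inequality holds, and $\dim B' < \dim B$ — contradicting the minimality of $B+\zeta$ for $Y$. Hence no such proper torsion subvariety $B'+\zeta'$ exists, so $Y$ is weak-transverse in $B+\zeta$, as claimed. The step I expect to be the main obstacle is making rigorous that passing to the minimal torsion subvariety $B'+\zeta'$ containing $Y$ does not destroy the anomalous inequality; the clean way, as outlined, is to note that we only ever shrink $\dim B$, and the inequality $\dim B - \dim Y < \codim V$ is monotone in the right direction — so in fact the argument is almost purely bookkeeping once one observes that $Y$ remains a component of the intersection with the smaller torsion variety.
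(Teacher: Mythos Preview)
Your proof is correct and follows essentially the same route as the paper: assume a proper torsion subvariety $B'+\zeta'\subsetneq B+\zeta$ contains $Y$, observe $Y$ is still a component of $V\cap(B'+\zeta')$, and note that $\codim B' > \codim B$ immediately gives $\codim Y < \codim V + \codim B < \codim V + \codim B'$, contradicting minimality. The digression about an ``Ax-type / dimension estimate'' and stabiliser considerations is a red herring you never actually use (and is not needed): the anomalous inequality is preserved simply because $\dim B' < \dim B$, as you yourself observe in the next sentence; you can excise that paragraph and the choice of the \emph{smallest} $B'+\zeta'$ without loss.
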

\begin{proof}
 Assume that  $Y$ is not weak-transverse in $B+\zeta$, then it is contained in an irreducible  torsion subvariety $B'+\zeta'$  of $B+\zeta$ with $\codim (B'+\zeta')>\codim (B+\zeta)$. So $Y$ is a component of  $ V \cap (B'+\zeta')$. In addition
 $$\codim Y <\codim V+\codim (B+\zeta)<\codim V+\codim (B'+\zeta'),$$
which contradicts the minimality of  $ (B+\zeta)$. 
 \end{proof}

\subsection{Torsion anomalous varieties as components of different intersections}
In the next lemma we prove that every $V$-torsion anomalous variety which is a component of $V\cap (B+\zeta)$, is also a component of any intersection $V\cap (A+\zeta')$  with $B+\zeta\subseteq A+\zeta'$ {and $\dim A$ not too big}.  We remark that we can choose $\zeta=\zeta'$.
\begin{lem} 
\label{cruciale}
Let $Y$ be a maximal $V$-torsion anomalous variety, and let $B+\zeta$ be minimal for $Y$. Then $Y$ is a component of $V\cap (A+\zeta)$ for every algebraic subgroup $A\supseteq B$, with $\codim A\ge \dim V-\dim Y$. 
\end{lem}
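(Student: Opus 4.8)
The statement to be proved (Lemma \ref{cruciale}) asserts: if $Y$ is a maximal $V$-torsion anomalous variety with minimal torsion variety $B+\zeta$, and $A\supseteq B$ is an algebraic subgroup with $\codim A \ge \dim V - \dim Y$, then $Y$ is a component of $V\cap(A+\zeta)$. The first reduction I would carry out is to note that $Y \subseteq V$ and $Y\subseteq B+\zeta \subseteq A+\zeta$, so certainly $Y \subseteq V\cap(A+\zeta)$; what must be shown is that $Y$ is a maximal irreducible component, i.e. that $Y$ is not strictly contained in an irreducible component $Y'$ of $V\cap(A+\zeta)$.

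\textbf{Main argument.} Suppose, for contradiction, that $Y \subsetneq Y'$ where $Y'$ is an irreducible component of $V\cap(A+\zeta)$. The key is to check that $Y'$ is itself $V$-torsion anomalous, which would contradict the maximality of $Y$. Since $Y'$ is a component of $V\cap(A+\zeta)$ and $A+\zeta$ is an irreducible torsion variety, the first defining condition of $V$-torsion anomalous is satisfied (once we know $A+\zeta$ is irreducible — if not, we replace it by the irreducible component containing $Y$, which is a translate of $A$ by a torsion point and still contains $B+\zeta$ by the observation just before the lemma that a translate contained in another $A+p'$ forces $A+p=A+p'$). It remains to verify the dimensional inequality $\codim Y' < \codim V + \codim A$. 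For this I would use the dimension bound for components of intersections: any component of $V\cap(A+\zeta)$ has $\codim Y' \le \codim V + \codim A$. Combining this with the hypothesis $\codim A \ge \dim V - \dim Y$, i.e. $\codim A \ge \codim Y - \codim V$ (using $\dim V - \dim Y = \codim Y - \codim V$), we get $\codim Y' \le \codim V + \codim A$ and we want the strict inequality. The point is that $Y \subsetneq Y'$ gives $\dim Y' \ge \dim Y + 1$, hence $\codim Y' \le \codim Y - 1 < \codim Y \le \codim V + \codim A$, where the last inequality is exactly the rearrangement of the hypothesis on $\codim A$. So $Y'$ satisfies both conditions and is $V$-torsion anomalous of dimension strictly larger than $Y$, contradicting maximality of $Y$. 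Therefore no such $Y'$ exists and $Y$ is a component of $V\cap(A+\zeta)$.

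\textbf{Expected obstacle.} The one delicate point is the reducibility of $A+\zeta$: the lemma is phrased for an algebraic subgroup $A$, and $A+\zeta$ need not be irreducible, whereas the definition of $V$-torsion anomalous requires an \emph{irreducible} torsion variety. I would handle this by passing to the irreducible component $A'+\zeta'$ of $A+\zeta$ that contains $Y$ (equivalently contains $B+\zeta$), noting $A'$ is the connected component of identity of $A$ up to translation so $\codim A' = \codim A$ and $B \subseteq A'$; then the argument above applies verbatim with $A'+\zeta'$ in place of $A+\zeta$, and a component of $V\cap(A'+\zeta')$ through $Y$ is automatically a component of $V\cap(A+\zeta)$ through $Y$. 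A secondary routine point is simply bookkeeping the identity $\dim V - \dim Y = \codim Y - \codim V$ (valid because all codimensions are taken in the same ambient $G$) so that the hypothesis $\codim A \ge \dim V - \dim Y$ reads as $\codim V + \codim A \ge \codim Y$, which is what feeds the contradiction. No deep input is needed — only the elementary dimension theory of intersections and the definitions of maximality and of $V$-torsion anomalous.
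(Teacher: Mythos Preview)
Your proof is correct and follows essentially the same approach as the paper's: both argue that any irreducible component $X$ of $V\cap(A+\zeta)$ strictly containing $Y$ would be $V$-torsion anomalous, contradicting the maximality of $Y$. Your organisation by contradiction is marginally cleaner than the paper's case split (strict inequality versus equality in $\codim X \le \codim V + \codim A$), and you are right to flag the irreducibility of $A+\zeta$, which the paper leaves implicit.
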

\begin{proof}
Clearly $Y\subseteq V\cap (A+\zeta)$. Let $X$ be  an irreducible component of $V\cap (A+\zeta)$ which contains $Y$, then
  \begin{equation*}
  \codim X \le \codim V+\codim A.
  \end{equation*}

  If the inequality is strict then  $X$ is anomalous and by the maximality of $Y$ we get $Y=X$.
  
  Otherwise, since by assumption $\codim A\ge \dim V-\dim Y$, we have
  $$\codim X = \codim V+\codim A\ge \codim V +\dim V-\dim Y=\codim Y,$$
but $Y\subseteq X$, so the opposite inequality  also holds.
This implies $\codim X=\codim Y$, so $Y=X$ and $Y$ is a component of $V\cap (A+\zeta)$.
 \end{proof}

\subsection{No torsion anomalous varieties on a weak-transverse translate}
\label{23}
The simple choice of maximal and minimal varieties and the group structure allow us to prove Conjecture \ref{TORAN} for weak-transverse translates. 
\begin{propo}\label{wtrans} Let $H+p$ be a weak-transverse translate in an abelian variety. Then the set of $(H+p)$-torsion anomalous varieties is empty.
\end{propo}
\begin{proof}
Let $Y$ be an $(H+p)$-torsion anomalous variety and let  $B+\zeta$ be minimal for $Y$. Then $Y$ is a component of $(H+p)\cap (B+\zeta)$ and $$\dim H+\dim B-\dim Y<N.$$
Remark that whenever $(H+p)\cap (B+\zeta)\neq \emptyset$, then $p=h+b+\zeta$ for some $h\in H$ and $b\in B$. Thus $$(H+p)\cap (B+\zeta)=(H+b+\zeta)\cap (B+\zeta)=(H+b+\zeta)\cap (B+b+\zeta)=(H\cap B)+b+\zeta$$ and $$\dim(H\cap B)=\dim((H+p) \cap (B+\zeta)).$$

In our case  $Y\subseteq (H+p)\cap (B+\zeta)$, then $$\dim(H\cap B)\geq \dim Y.$$ Thus
$$\dim(H+B)=\dim H +\dim B -\dim(H\cap B)\le \dim H +\dim B -\dim Y   <N,$$
where the last inequality is obtained from the fact that $Y$ is $V$-torsion anomalous. 
Therefore $H+B$ is a proper algebraic subgroup of the ambient variety. Since $p\in H+B+\zeta$, then $H+p\subseteq H+B+\zeta$, against the weak-transversality of $H+p$.
\end{proof}
\subsection{Finitely many maximal $V$-torsion anomalous varieties in $B+\mathrm{Tor}_G$}
 Let $V$ be a weak-transverse variety in $G$.
  Let us fix an irreducible torsion subvariety $B$ of $G$; we end this section by showing the
 finiteness of the maximal $V$-torsion anomalous varieties in $ V\cap (\sotto+{\mathrm{Tor}}_{G})$ for which $B+\zeta$ is minimal, for some torsion point $\zeta$. 
 This result implies the finiteness of the maximal $V$-torsion anomalous varieties, if one can uniformly bound the degree of the  corresponding minimal torsion variety $B+\zeta$.

We first prove that the maximal $V$-torsion anomalous components of intersections of the form $B+\zeta$, for $\zeta\in\mathrm{Tor}_{G}$, are  non-dense in $V$. 
\begin{lem}\label{sottogruppo1} 
Let $V$ be a weak-transverse variety in $G$.
Let $\sotto$ be an abelian subvariety of $G$.  Then the  union of all $V$-torsion anomalous varieties which are components of an intersection of the form $V\cap(\sotto+\zeta)$, for $\zeta\in\mathrm{Tor}_{G}$, is non-dense in $V$.

\end{lem}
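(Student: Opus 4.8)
The plan is to show that any $V$-torsion anomalous variety $Y$ contained in $V\cap(B+\mathrm{Tor}_G)$ is contained in a proper closed subset of $V$ which does not depend on the particular torsion translate $B+\zeta$. First I would reduce to the case where $Y$ is maximal and $B+\zeta$ is minimal for $Y$; by Lemma \ref{wt} this means $Y$ is weak-transverse in $B+\zeta$, and in particular $Y$ is not itself a torsion subvariety of $B+\zeta$ unless $Y=B+\zeta$. The key observation is that the anomalous inequality $\codim Y<\codim V+\codim B$ forces $\dim Y>\dim V+\dim B-N$, so $Y$ has dimension strictly larger than the generic fibre dimension of the projection of $V\cap(B+\zeta)$; the point is to exploit this uniformly in $\zeta$.

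The main idea is to pass to the quotient $\pi\colon G\to G/B$. Since $B+\zeta$ is a fibre of $\pi$ (up to translation), an irreducible component $Y$ of $V\cap(B+\zeta)$ with $\dim Y>\dim V-\dim(G/B)=\dim V+\dim B-N$ is precisely an irreducible component of a fibre of $\pi|_V\colon V\to G/B$ of larger than expected dimension. By upper semicontinuity of fibre dimension (applied to $\pi|_V$, or rather to the Stein factorisation / the flattening stratification), the union of all such "fat" fibre components is contained in a proper closed subset $Z_B\subsetneq V$, and crucially $Z_B$ depends only on $B$, not on which torsion coset $\zeta$ we pick. So every $V$-torsion anomalous variety $Y$ in $V\cap(B+\mathrm{Tor}_G)$ for which $B+\zeta$ is minimal is contained in $Z_B$. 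Here one must also handle the relative-codimension-zero case, i.e. $Y=B+\zeta$ itself: these are the torsion subvarieties of $G$ of the form $B+\zeta$ contained in $V$, and by the Manin--Mumford Conjecture the closure of all torsion subvarieties contained in a weak-transverse $V$ is a proper closed subset of $V$ — in fact for fixed $B$ there are only finitely many cosets $B+\zeta\subseteq V$, again a consequence of Manin--Mumford applied in $G/B$ (the torsion points of $G/B$ lying in the image of $V$ are non-dense since $V$ is weak-transverse, hence its image is not a torsion variety of $G/B$... one checks weak-transversality is inherited appropriately, or uses that $\pi(V)$ non-torsion implies finitely many torsion points over which the fibre could be a full coset inside $V$).

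I would phrase this cleanly as follows: set $W=\pi(V)\subseteq G/B$ and let $Z_B$ be the union of $\pi|_V^{-1}(w)$ over the (finitely many) $w$ in the non-maximal-dimensional locus of $\pi|_V$ together with the finitely many torsion $w$ such that $\pi|_V^{-1}(w)\subseteq V$ is a full coset; then $Z_B$ is closed, proper in $V$ because a generic fibre of $\pi|_V$ has the expected dimension and $V$ is irreducible, and every $V$-torsion anomalous $Y$ in $V\cap(B+\mathrm{Tor}_G)$ with minimal torsion variety of the shape $B+\zeta$ lies in $Z_B$. This gives non-density.

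The main obstacle I anticipate is the bookkeeping around the relative-codimension-zero components, i.e. ensuring that the cosets $B+\zeta$ that happen to lie entirely inside $V$ are also non-dense: this is exactly where weak-transversality of $V$ and the Manin--Mumford Conjecture enter, and one has to be a little careful that a component $Y$ of $V\cap(B+\zeta)$ with $Y=B+\zeta$ still counts as torsion anomalous (it does, with relative codimension zero) and is absorbed into the closed set. A secondary technical point is verifying that the "fat fibre" locus is genuinely closed and proper — this is standard (Chevalley's theorem on fibre dimension plus irreducibility of $V$), but it must be invoked correctly for the non-flat morphism $\pi|_V$, for which one uses that the locus $\{v\in V:\dim_v \pi|_V^{-1}(\pi(v))\ge \dim V-\dim W+1\}$ is Zariski closed and, since over a dense open of $W$ the fibre dimension equals $\dim V-\dim W$, is a proper subset of the irreducible $V$.
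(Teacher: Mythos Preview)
Your overall strategy via the quotient $\pi\colon G\to G/B$ matches the paper's, but there is a genuine gap. You identify the anomalous threshold $\dim Y>\dim V+\dim B-N=\dim V-\dim(G/B)$ with the fat-fibre threshold $\dim Y>\dim V-\dim W$, where $W=\pi(V)$; these coincide only when $\pi|_V$ is surjective. When $\dim W<\dim(G/B)$, the generic fibre of $\pi|_V$ already has dimension $\dim V-\dim W>\dim V+\dim B-N$, so \emph{every} component of \emph{every} nonempty $V\cap(B+\zeta)$ is anomalous --- not only the fat-fibre components or the full cosets. Your set $Z_B$ (fat fibres together with finitely many full cosets) is still proper and closed, but it no longer contains all the anomalous components, and the argument does not close. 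The special treatment you give to the relative-codimension-zero case via Manin--Mumford is not enough: in the non-surjective regime it is the ordinary, generic-dimension fibres over torsion points that must be controlled.

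The paper avoids this case split by arguing by contradiction. Assuming the anomalous components are dense, one passes by pigeonhole to an equidimensional dense family $Y_i$ of common dimension $d$; density forces $\dim V=d+\dim W$, while the anomalous inequality gives $d>\dim V+\dim B-N$, whence $\dim W<\dim(G/B)$. Thus $W=\pi(V)$ is a \emph{proper} weak-transverse subvariety of $G/B$, every $Y_i$ projects to a torsion point of $W$, and Manin--Mumford applied to $W$ yields the contradiction. The point is that Manin--Mumford carries the whole weight, applied to \emph{all} torsion of $\pi(V)$ and not merely to the isolated full cosets. Your direct approach can be repaired by treating the surjective and non-surjective cases separately (semicontinuity for the former, Manin--Mumford on $\pi(V)$ for the latter), but as written it handles only the first.
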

\begin{proof}
Suppose that there exists a dense  union of maximal $V$-torsion anomalous varieties which are components of $V\cap(B+\zeta)$, for $\zeta$ ranging over all torsion points of $G$. By the  box principle, there exists a dense subunion of $d$-dimensional $V$-torsion anomalous varieties  $Y_i\subseteq V\cap (\sotto+\zeta_i)$, for $\zeta_i$ torsion points, where $d$ is the maximal integer having this property. 

Consider the natural projection $\pi_\sotto: G\to G/\sotto$. As the $Y_i$ are dense and project to  points, the fibre dimension theorem tells us that 
$$\dim V = \dim Y_i + \dim \pi_\sotto(V).$$
Note that $V$ is not $V$-torsion anomalous, because $V$ is weak-transverse, thus $\dim Y_i<\dim V$ and $\dim \pi_\sotto(V)$ is at least one.
However the $Y_i$ are anomalous therefore
$$\codim Y_i <\codim V + \codim\sotto.$$
We deduce that
$$\dim \pi_\sotto(V) < \codim\sotto=\dim G/\sotto.$$
This shows that ${\pi_\sotto}_{|V}$ is not surjective on $G/\sotto$. Since $V$ is weak-transverse,  also $\pi_\sotto(V)$ is weak-transverse  in $G/\sotto$. Notice  that the image  via $\pi$ of any $Y_i$ is a torsion point. By  the Manin-Mumford Conjecture the closure of the torsion of $\pi_\sotto(V)$ is non-dense, thus also its preimage is non-dense in $V$. This contradicts the density of the $Y_i$. \end{proof}

An application of this result actually shows the finiteness of the maximal $V$-torsion anomalous varieties in $ V\cap (\sotto+{\mathrm{Tor}}_{G})$  for which $B+\zeta$ is minimal, for some torsion point $\zeta$.

\begin{lem}\label{sottogruppo}
Let $V$ be weak-transverse  in $G$.
 Let $\sotto$ be an abelian subvariety of $G$. Then there exist only finitely many torsion points $\zeta$ such that $V\cap(\sotto+\zeta)$ has a maximal $V$-torsion anomalous component for which $B+\zeta$ is minimal.

\end{lem}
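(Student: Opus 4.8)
The plan is to deduce Lemma \ref{sottogruppo} from Lemma \ref{sottogruppo1} together with a counting argument on the projection $\pi_\sotto\colon G\to G/\sotto$. First I would fix the abelian subvariety $\sotto$ and suppose, for contradiction, that there are infinitely many torsion points $\zeta$ for which $V\cap(\sotto+\zeta)$ has a maximal $V$-torsion anomalous component $Y_\zeta$ with $\sotto+\zeta$ minimal for it. Since each such $Y_\zeta$ is contained in $V\cap(\sotto+\mathrm{Tor}_G)$, Lemma \ref{sottogruppo1} tells us that the union of the $Y_\zeta$ is contained in a proper closed subset $W\subsetneq V$; hence infinitely many $Y_\zeta$ lie inside a single irreducible component $W_0$ of $W$, and these $Y_\zeta$ are pairwise distinct (distinct $\zeta$ give distinct, because minimal, cosets $\sotto+\zeta$, so the $Y_\zeta$ cannot coincide).

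Next I would push everything down by $\pi_\sotto$. Each $Y_\zeta$ maps to the single torsion point $\pi_\sotto(\zeta)$ of $G/\sotto$, and distinct minimal cosets $\sotto+\zeta$ map to distinct torsion points; so we obtain infinitely many distinct torsion points $\pi_\sotto(\zeta)\in\pi_\sotto(W_0)$. On the other hand, since $V$ is weak-transverse, $\pi_\sotto(V)$ is weak-transverse in $G/\sotto$, and in particular it is non-torsion; by the Manin--Mumford Conjecture the Zariski closure of the torsion points lying in $\pi_\sotto(V)$ is a proper closed subset of $\pi_\sotto(V)$. The key point is to combine this with the fact that the fibres of $\pi_\sotto$ restricted to $V$ through the anomalous locus have the ``excess'' dimension forced by the anomalous inequality, exactly as in the proof of Lemma \ref{sottogruppo1}: the dimensional formula $\dim V=\dim Y_\zeta+\dim\pi_\sotto(V)$ holds for the dense (in $W_0$) family, forcing $\dim\pi_\sotto(W_0)<\codim\sotto$ and in fact that $\pi_\sotto(W_0)$ consists of torsion points together with bounded-degree fibres.

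The heart of the argument is then a degree/finiteness estimate: the infinitely many torsion points $\pi_\sotto(\zeta)$ would have to accumulate on an irreducible component of the torsion closure of $\pi_\sotto(V)$, and pulling back through $\pi_\sotto$ this component has preimage of dimension $\dim(\text{torsion component})+\dim\sotto$, which, combined with the anomalous dimension inequality satisfied by the $Y_\zeta$, contradicts the maximality (and minimality) assumption. Concretely, either the $Y_\zeta$ become non-maximal because they are swept out by a positive-dimensional torsion family in the fibre direction, or $\sotto+\zeta$ fails to be minimal because a smaller torsion coset already contains $Y_\zeta$; both options contradict our standing hypotheses. This reduces the statement to Lemma \ref{sottogruppo1} plus the Manin--Mumford Conjecture applied to $\pi_\sotto(V)$.

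I expect the main obstacle to be the bookkeeping that turns ``the $Y_\zeta$ are non-dense in $V$'' into ``only finitely many $\zeta$ occur'': non-density alone allows infinitely many distinct subvarieties inside a single lower-dimensional $W_0$, so one genuinely needs the projection argument to convert infinitely many anomalous components into infinitely many torsion points of $\pi_\sotto(V)$, and then to invoke Manin--Mumford. Care is needed to check that distinct minimal cosets really do give distinct images under $\pi_\sotto$ — this uses minimality of $\sotto+\zeta$ together with the remark before Lemma \ref{cruciale} that a translate is determined by its underlying subgroup together with any coset representative modulo that subgroup — and that the dimensional formula of Lemma \ref{sottogruppo1} applies uniformly to the infinite subfamily, which is where the choice of a maximal-dimensional dense subfamily via the box principle re-enters.
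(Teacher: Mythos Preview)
Your overall strategy---project by $\pi_\sotto$, observe that the images of the $Y_\zeta$ are torsion points dense in $\pi_\sotto(W_0)$, so $\pi_\sotto(W_0)$ is a torsion variety, and then seek a contradiction---is a viable route, but the proposal does not actually close the argument, and a couple of the intermediate claims are wrong as written.

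First, the formula ``$\dim V=\dim Y_\zeta+\dim\pi_\sotto(V)$'' is misplaced: that identity appears in Lemma~\ref{sottogruppo1} because there the $Y_i$ are dense in $V$. Here they are only dense in $W_0$, so the relevant identity is $\dim W_0=\dim\pi_\sotto(W_0)+(\text{generic fibre dimension of }\pi_\sotto|_{W_0})$, and you must argue separately that this generic fibre dimension is at least $d=\dim Y_\zeta$ (which does follow, since the torsion images are Zariski dense and hence meet the open locus where fibres have the generic dimension). Second, and more seriously, your proposed contradiction---``either the $Y_\zeta$ become non-maximal \dots\ or $\sotto+\zeta$ fails to be minimal''---is not substantiated. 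Nothing in your outline exhibits a smaller torsion coset containing some $Y_\zeta$, so the ``minimality'' horn is empty; and Manin--Mumford on $\pi_\sotto(V)$, while true, is not what is doing the work (you need that $\pi_\sotto(W_0)$ itself is a torsion variety, which follows from density, not from Manin--Mumford on the larger $\pi_\sotto(V)$). What is missing is: write $W_0\subseteq H+\zeta_0$ with $\sotto\subsetneq H$, take a component $W_0''$ of $V\cap(H+\zeta_0)$ containing $W_0$, and then split on whether $W_0''$ is $V$-torsion anomalous. If it is, maximality of $Y_\zeta$ is violated since $Y_\zeta\subsetneq W_0\subseteq W_0''$. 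If not, then $\dim W_0''=\dim V+\dim H-N$, while the fibre estimate gives $\dim W_0\ge d+\dim H-\dim\sotto$; combining yields $d\le\dim V+\dim\sotto-N$, contradicting the anomalous inequality $d>\dim V+\dim\sotto-N$.

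For comparison, the paper does \emph{not} project by $\pi_\sotto$ in this lemma. It writes the closure of the relevant $Y_i$'s as $X_1\cup\cdots\cup X_R$ and shows directly that each $X_j$ must equal one of the $Y_i$. If some $X_1$ were strictly larger, the paper takes the \emph{smallest} torsion variety $H+\zeta$ containing $X_1$ (so $X_1-\zeta$ is weak-transverse in $H$), checks via the non-anomalousness of $X_1$ that the $Y_i-\zeta$ are $(X_1-\zeta)$-torsion anomalous inside $H$, and then applies Lemma~\ref{sottogruppo1} to $X_1-\zeta\subseteq H$. So the paper's contradiction is a recursive appeal to Lemma~\ref{sottogruppo1} in a smaller ambient, whereas the (completed) version of your argument would be a direct dimension count against maximality; both work, but yours as written stops short of either.
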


\begin{proof}
 Let $\dim G=N$. From Lemma \ref{sottogruppo1} the closure of all maximal $V$-torsion anomalous $Y_i$ which are components of  an intersection of the form $V\cap(B+\zeta)$, for $\zeta\in\mathrm{Tor}_{G}$, is a proper closed subset of $V$. In particular, if we consider only those for which $B+\zeta$ is minimal, their closure is still a proper closed subset.  We write it as the finite union of its irreducible components $X_1 \cup \dots \cup X_R$. We now show that the $X_j$ 
are exactly   the maximal $V$-torsion anomalous varieties such that  $B+\zeta$ is minimal, for some $\zeta\in \mathrm{Tor}_{G}$.  Suppose  that  $X_1$ is not  a maximal $V$-torsion anomalous component for which $B+\zeta$ is minimal, with $\zeta \in \mathrm{Tor}_{G}$. Then the $Y_i$ contained in $X_1$ are dense in $X_1$, $\dim X_1> \dim Y_i$ and $X_1$ is not $V$-torsion anomalous due to the maximality of the $Y_i$. 

 By assumption,  $B+\zeta_i$ is minimal for $Y_i$ for some $\zeta_i\in {\mathrm{Tor}_{G}}$.
Then $X_1$ is not contained in any $B+\zeta$, otherwise $Y_i\subseteq X_1\subseteq V\cap (B+\zeta)$ and $X_1$ would be a $V$-torsion anomalous variety; this gives a contradiction.

 Let $H+\zeta$ be the torsion variety (not necessarily proper) of smallest dimension containing $X_1$. Then $X_1$ is weak-transverse in $H+\zeta$, and since $X_1$ is not $V$-torsion anomalous 
\begin{equation}
\label{finB}N-\dim X_1=N-\dim V+N-\dim H.\end{equation}
 Recall that, the $Y_i$ are $V$-torsion anomalous varieties and  use \eqref{finB}  to obtain
$$N-\dim Y_i<N-\dim V+N-\dim B=\dim H-\dim X_1+N-\dim B.$$
whence 
$$ \dim H-\dim Y_i<\dim H-\dim X_1+\dim H-\dim B.$$
 We now claim that $(B+\zeta_i)\subseteq(H+\zeta)$. Indeed if it were not so, the intersection $(B+\zeta_i)\cap(H+\zeta)$ would contradict the minimality of $B+\zeta_i$ for $Y_i$. Therefore $(B+\zeta_i-\zeta)\subseteq H$; translating  every variety by $\zeta$, we obtain that $X_1-\zeta$ is weak-transverse in $H$ and the  $Y_i-\zeta$ are dense in $X_1-\zeta$ and 
$(X_1-\zeta)$-torsion anomalous. This contradicts Lemma \ref{sottogruppo1} applied to $X_1-\zeta$ in $H$.\end{proof}

\section{Main ingredients}\label{ingredienti}

\subsection{Notation}\label{seznotaz}
 Recall that all varieties are assumed to be defined over the field of algebraic numbers.  
Let $A$ be an abelian variety; to  a symmetric  ample line bundle $\elle$ on $A$ we  attach an embedding $i_\elle: A\hookrightarrow \mathbb{P}^m$ defined by the minimal power of $\elle$ which is very ample. Heights and degrees corresponding to $\elle$ are computed via such an embedding. More precisely, the degree of a subvariety of $A$ is the degree of its image under $i_\elle$;  $\hat{h}=\hat{h}_\elle$  is the $\elle$-canonical N\'eron-Tate height of a point in $A$, and $h$ is the normalized height of a subvariety of $A$ as defined, for instance, in \cite{patriceI}.

 Most often we consider products of an elliptic curve $E$. Then, we denote by $\cani_1$ the line bundle on $E$ defined by the neutral element, and by $\cnbundle$ the bundle on $E^N$ obtained as the tensor product of the pull-backs of $\cani_1$ through the $N$ natural projections. 

Unless otherwise specified, we compute degrees and heights on $E^N$ with respect to $\cnbundle$. 

\medskip

We note  that by a result of Masser and W\"ustholz in \cite{Masserwustholz} Lemma 2.2, every abelian subvariety of $E^N$ is defined over a finite extension of $k$ of degree bounded by $3^{16 N^4}$. For this reason we always assume that all abelian subvarieties are defined over $k$. Up to a field extension of degree two, we also assume that every endomorphism of $E$ is defined over $k$.

\medskip

By $\ll$ we {always denote} an inequality up to a multiplicative constant depending only  on $E$ and $N$.

\subsection{Subgroups and torsion varieties}
\label{torsione}

Let $B+\zeta$ be an irreducible torsion variety of $E^N$ with $\codim\sotto=r$.
We  associate  $\sotto$ with a morphism $\varphi_\sotto:E^N \to E^{r}$ such that $\ker \varphi_\sotto=\sotto+\tau$ with $\tau$ a torsion set of absolutely bounded cardinality (by \cite{Masserwustholz} Lemma 1.3).  
In turn $\varphi_\sotto$ is identified with a matrix in $\mathrm{Mat}_{r\times N}(\mathrm{End}(E))$ of rank $r$ such that the degree of   $\sotto$ is essentially (up to constants depending only on $N$) the sum of  the squares of the determinants of the minors of $\varphi_\sotto$. By {Minkowski}'s theorem,  such a sum is essentially the product of the  squares $d_i$ of the norms  of the  rows of the matrix representing $ \varphi_\sotto$ (see for instance \cite{FuntorialeAV} for more details).

In short  $B+\zeta$ is a component of the torsion variety given as the  zero set of forms $h_1,   \dots , h_r$,  which are the rows of $\varphi_\sotto$,  of degree    $d_i$. In addition  $$d_1  \cdots d_r \ll \deg (B+\zeta)\ll d_1  \cdots d_r  .$$ 
We  {assume} to have ordered the $h_i$  by increasing degree. 

\medskip

We also recall that, as is well known, we can use Siegel's lemma to complete the matrix defining $B$ to a square invertible matrix; this gives a construction for the orthogonal complement $B^\perp$ and shows that $\sharp(B\cap B^\perp)\ll (\deg B)^2$.

\medskip

As remarked in \cite{ioant}, in a product of different elliptic curves an algebraic subgroup is associated with a matrix where the entries corresponding to the non-isogenous factors are all zero. 

For this reason our theorems, which we prove in $E^N$ for simplicity,   hold in products of different elliptic curves as well.

\subsection{The Zhang Estimate}\label{sezzhang}
We recall the following definition.
\begin{D}\label{defiessmin}
For a variety $V\subseteq A$, the \emph{essential minimum $\mu(V)$} is the supremum of the reals $\theta$ such that the
set  $ \{ x \in V(\overline{\qe}) \mid \hat{h} (x)\le \theta\}$
is non-dense in $V$.
\end{D}
The Bogomolov Conjecture, proved by Ullmo and Zhang in 1998, asserts that 
the essential minimum $\mu(Y)$ is strictly positive if and only if $Y$ is non-torsion.

From the crucial result in Zhang's proof of the Bogomolov Conjecture (see \cite{ZhangEquidistribution}) and from the definition of normalized height, we have that for an irreducible subvariety $X$ of an abelian variety:
\begin{equation}\label{zhang}
\mu(X) \le \frac{h(X)}{\deg X} \le (1+\dim X) \mu(X).
\end{equation}
\subsection{The Arithmetic B\'ezout theorem}
The following version of the Arithmetic B\'ezout theorem is due to Philippon \cite{patrice}.

\begin{thm}[Philippon]\label{bezout}
Let $X$ and $Y$ be irreducible subvarieties of the  projective space $\mathbb{P}^n$ defined over $\overline{\mathbb{Q}}$. Let $Z_1, \dots , Z_g$ be the irreducible components of $X\cap Y$. Then 
$$ \sum_{i=1}^g h(Z_i)\le \deg X h(Y) +\deg Y h(X) +c(n) \deg X \deg Y,$$
where $c(n)$ is a constant depending only on $n$.
\end{thm}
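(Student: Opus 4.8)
The plan is to recover this from the standard machinery of heights of projective varieties, following Philippon \cite{patrice}. I would work with the description of the normalized height $h(Z)$ of an irreducible $Z\subseteq\mathbb{P}^n$ as the sum over all places of $\overline{\mathbb{Q}}$, suitably normalized, of the logarithmic Mahler measures of the Chow form $f_Z$ of $Z$, and then reduce everything to two building blocks. The first is a product estimate: the Chow form of $X\times Y$ is, after an explicit linear change of the auxiliary variables, a product of specializations of $f_X$ and $f_Y$, which should give $h(X\times Y)\le\deg X\,h(Y)+\deg Y\,h(X)$ up to a term bounded by $\deg X\deg Y$ times a constant depending only on $n$. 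The second is a one-hypersurface inequality: if $g$ is a form of degree $\delta$ and $W_1,\dots,W_s$ are the components of $Z\cap\{g=0\}$, then each $f_{W_j}$ divides a specialization of $f_Z\cdot g$, and Gauss's lemma at the finite places together with Jensen-type integral estimates at the archimedean ones should yield $\sum_j h(W_j)\le\deg Z\,\hat h(g)+\delta\,h(Z)+c(n)\,\delta\deg Z$, with $\hat h(g)$ the height of the coefficient vector of $g$.

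First I would pass from a general intersection to these two inputs by the diagonal trick: $X\cap Y$ is isomorphic to $(X\times Y)\cap\Delta$, with $\Delta\subseteq\mathbb{P}^n\times\mathbb{P}^n$ the diagonal, which is cut out set-theoretically by bilinear forms whose coefficients lie in $\{0,\pm1\}$ and hence have height zero. Working with multiprojective Chow forms, or after a Segre-type embedding into one projective space, I would intersect $X\times Y$ successively with $n$ such forms. Since each has height zero and, by the geometric B\'ezout theorem, every intermediate degree is at most $\deg X\deg Y$ times a constant depending only on $n$, each of the $n$ applications of the one-hypersurface inequality should cost only a term of the shape $c(n)\deg X\deg Y$; summing the $n$ steps and adding the product estimate would then give the stated bound. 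One point that needs care is that the intermediate sections need not be proper intersections; I would handle this either by replacing the equations of $\Delta$ with generic translates, via Chow's moving lemma, or by carrying the excess components along and discarding them at the end, which only lowers the left-hand side.

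The hard part will be the archimedean half of the one-hypersurface inequality, that is, bounding the Mahler measure of the eliminant form of $Z\cap\{g=0\}$ in terms of those of $f_Z$ and $g$: the unavoidable correction there, polynomial in the degrees, is precisely what forces the term $c(n)\deg X\deg Y$. I would prove it with Cauchy and Jensen integral estimates combined with the invariance of the Mahler measure under the unitary group; the finite places are comparatively harmless, since there the Mahler measure is just the Gauss norm and Gauss's lemma is an exact identity. The other, more bookkeeping-type, difficulty is keeping precise track of the multiplicities and excess components produced in the diagonal construction.
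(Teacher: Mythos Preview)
The paper does not prove this statement at all: Theorem~\ref{bezout} is simply quoted from the literature as a result of Philippon~\cite{patrice}, with no accompanying argument. So there is nothing in the paper to compare your proposal against.

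That said, your sketch is a reasonable outline of how the arithmetic B\'ezout theorem is actually established in Philippon's work (and in the related approach of Bost--Gillet--Soul\'e): reduce to the diagonal in $\mathbb{P}^n\times\mathbb{P}^n$, control the height of a product via the Chow form, and iterate a height inequality for intersecting with a single hypersurface, absorbing the archimedean error terms into the $c(n)\deg X\deg Y$ contribution. The points you flag as delicate---the archimedean Mahler-measure estimate and the bookkeeping of non-proper intersections---are indeed where the work lies. For the purposes of this paper, however, you should simply cite the result rather than reprove it.
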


\subsection{An effective Bogomolov Estimate  for relative transverse varieties}

The following theorem is a sharp effective version of the Bogomolov Conjecture for weak-transverse varieties.  It is an elliptic analogue, up to a lower order term, of a toric conjecture of Amoroso and David in \cite{fra}.

\begin{thm}[Checcoli-Veneziano-Viada \cite{FuntorialeAV}] \label{due}
 Let $E^N$ be a  product of elliptic curves, and let $Y$ be an irreducible  subvariety of $E^N$  transverse in a translate $\sotto +p$. Then, for any $\eta>0$, there exists a positive constant $\ccinque$  depending on $E^N$ and $\eta$, such that
$$\mu (Y) \ge \ccinque  \frac{
(\deg \sotto)^{\frac{1}{\dim B-\dim Y}-\eta}
}{
(\deg Y)^{{\frac{1}{\dim B-\dim Y}}+ \eta}
}.$$
  \end{thm}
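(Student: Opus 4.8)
The plan is to reduce the weak-transverse estimate for $Y$ in a translate $B+p$ to the transverse case, which is the content of a toric-type analogue, and then to transport the bound through the quotient isogeny. First I would observe that since $Y$ is transverse in $B+p$, after translating we may assume $p$ lies in $B$ (or work with $Y-p$), and replace $B$ by an isogenous product of elliptic curves: by the material in Section \ref{torsione}, there is an isogeny $\psi\colon E^N\to E^{\dim B}\times E^{N-\dim B}$ under which $B$ (up to torsion) maps onto a factor $E^{\dim B}$, with the degree of $B$ controlling the degree of $\psi$ restricted to $B$. The essential minimum, the degree of $Y$, and the degree of $B$ all change under $\psi$ only by factors that are bounded in terms of $\deg B$ and constants depending on $E^N$, which can be absorbed into the $\eta$-exponent. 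So it suffices to prove the estimate when the ambient abelian variety is $E^{\dim B}$ itself and $Y$ is transverse (in the usual sense) there.

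Next, for $Y$ transverse in $E^{g}$ (writing $g=\dim B$), I would invoke the sharp effective Bogomolov bound for transverse subvarieties of products of elliptic curves---the elliptic analogue, up to a lower-order term, of the Amoroso--David toric conjecture---which gives
\begin{equation*}
\mu(Y)\gg_\eta \frac{1}{(\deg Y)^{\frac{1}{g-\dim Y}+\eta}}.
\end{equation*}
This is exactly the shape required, with $\deg B$ playing, in the original $E^N$, the role of the normalizing degree of the ambient factor; tracking the isogeny $\psi$ converts the bare $1$ in the numerator into the factor $(\deg B)^{\frac{1}{\dim B-\dim Y}-\eta}$ claimed in the statement, since the fibres of $\psi|_B$ have cardinality comparable to a power of $\deg B$ and pushing $Y$ forward multiplies its degree by a controlled amount. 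I would carry out this bookkeeping carefully, keeping every exponent of $\deg B$ explicit so that the final constant $c_1$ depends only on $E^N$ and $\eta$.

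The main obstacle I expect is precisely this passage from the transverse Bogomolov estimate on the isogenous factor back to the weak-transverse statement on $E^N$: one must check that the isogeny used to split off $B$ does not contribute an uncontrolled power of $\deg B$ to $\mu(Y)$ or to $\deg Y$, i.e.\ that the comparison $\mu(\psi(Y))\asymp \mu(Y)$ and $\deg\psi(Y)\asymp \deg Y$ holds with the constants depending only on $E$, $N$ and $\deg B$ in the right way, and that the exponent on $\deg B$ that emerges is exactly $\frac{1}{\dim B-\dim Y}$ up to $\eta$. A secondary point requiring care is the case $\dim Y=0$ or $Y$ a point, and more generally handling the translation point $p$: one should use that $\mu(Y)$ is a translation-invariant-up-to-height quantity only after absorbing $\hat h(p)$, but since $Y$ is transverse in $B+p$ we are free to first apply a suitable multiplication isogeny to send $B+p$ to $B+\zeta'$ with $\zeta'$ torsion when $p$ is torsion, or otherwise to reduce directly to the genuine transverse case inside $B$; the details of this reduction are routine once the degree comparisons above are in place, so the real work is the quantitative control of the splitting isogeny.
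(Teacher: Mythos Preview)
The paper does not prove this here; it is quoted from \cite{FuntorialeAV}, whose argument is summarised in one line as ``an equivalence between two line bundles'' together with Galateau's transverse Bogomolov bound with respect to the standard bundle $\cnbundle$. Your isogeny-and-pullback plan is exactly this in geometric language---comparing $\cnbundle|_B$ with the pullback of the standard bundle on $E^{\dim B}$ along an isogeny $E^{\dim B}\to B$ is precisely what manufactures the numerator $(\deg B)^{1/(\dim B-\dim Y)}$---so you have correctly identified both the input (Galateau) and the mechanism, and the ``main obstacle'' you flag is indeed the whole content of the cited paper.

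Your handling of the translation point $p$, however, is muddled. A multiplication isogeny will not send a non-torsion $p$ to torsion, and if $p$ is already torsion no isogeny is needed since translating by $-p$ preserves $\mu$ exactly. For arbitrary $p$ the clean reduction is to decompose $p=p_B+p'$ with $p'$ in an orthogonal complement $B^\perp$, so that $B+p=B+p'$; the orthogonality lemma of Appendix~\ref{AppendiceB} then gives $\hat h(y)=\hat h(y-p')+\hat h(p')$ for every $y\in Y$, whence $\mu(Y)\ge\mu(Y-p')$ with $Y-p'$ transverse in $B$ itself, and one is reduced to $p=0$. In all of the paper's applications $p=\zeta$ is torsion anyway, so this subtlety never bites, but since the theorem is stated for arbitrary translates this is the step to fill in.
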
 

The dependence on the  degree of $B$ is crucial in our applications. This  theorem holds for $E$ with or without CM and it is a special case of the main theorem of \cite{FuntorialeAV}.
Its proof is based on an equivalence between  line bundles, and on  the  lower bound  for the essential minimum of a transverse variety with respect to the standard bundle $\cnbundle$. Such a bound is given by Galateau in \cite{galateau}.    The constant $\ccinque$ is effective and becomes explicit if the constants in  \cite{galateau} are made explicit\footnote{In a personal communication Galateau provided us explicit computations.}.

\subsection{A relative Lehmer Estimate for points}\label{sezcarri}

In \cite{carrizosaIMRN}, Theorem 1.15, Carrizosa proves the so called relative Lehmer problem for CM abelian varieties.
Her theorem  generalises to CM abelian varieties a result of Ratazzi in \cite{ratazziIMRN} for one CM elliptic curve.  The proof of Ratazzi is inspired  to the  theorem of Amoroso and Zannier in \cite{AZ} for algebraic numbers.
As a straightforward corollary of her theorem we have the following:

\begin{thm}[Carrizzosa]\label{carri2}
Let $E$ be an elliptic curve with CM defined over a field $k$. Let $P$ be a point of infinite order in $E^N$, and let $B+\zeta$ be the torsion variety of minimal dimension containing $P$, with $B$ an abelian subvariety and $\zeta$ a torsion point. Then for every $\eta>0$ there exists a positive constant $\csei$ depending on $E^N$ and $\eta$, such that
\[
\hat{h}(P)\geq \csei  \frac{(\deg B)^{\frac{1}{\dim B}-\eta}}{[k_\mathrm{tor}(P):k_\mathrm{tor}]^{\frac{1}{\dim B}+ \eta}}.
\]
\end{thm}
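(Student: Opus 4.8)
The plan is to deduce Theorem \ref{carri2} from Carrizosa's relative Lehmer estimate (Theorem \ref{carristatement}) applied to the abelian variety $A = E^N$ with $\mathcal{L}$ the fourth power of the standard symmetric ample bundle $\mathcal{O}_N$, the abelian subvariety $H = B$, and the point $P' = P - \zeta$. First I would observe that $P'$ lies in $B$ and, by the minimality of $B+\zeta$ as the smallest torsion variety containing $P$, the point $P'$ is not torsion modulo any proper abelian subvariety of $B$: indeed if $P'$ were torsion modulo some proper $B_0 \subsetneq B$, then $P$ would lie in a proper torsion subvariety of $B+\zeta$, contradicting minimality. Since $\hat h$ is translation-invariant on torsion points, $\hat h(P) = \hat h(P')$ up to the bounded comparison between $\hat h_{\mathcal{L}}$ and $\hat h_{\mathcal{O}_N}$ (a constant depending only on $E^N$, absorbed into $\ll$), so Carrizosa's inequality applies to give a lower bound for $\hat h(P)$ in terms of $\omega_{k_{\mathrm{tor}}}(P',B)$, $\deg B$ and the constant $\kappa(\dim B)$.

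The next step is to translate the quantity $\omega_{k_{\mathrm{tor}}}(P',B)$ into the more usable quantity $[k_{\mathrm{tor}}(P):k_{\mathrm{tor}}]$. By definition, $\omega_{k_{\mathrm{tor}}}(P',B) = \min_V (\deg_{\mathcal{L}} V / \deg_{\mathcal{L}} B)^{1/\codim_B V}$ over proper subvarieties $V$ of $B$ defined over $k_{\mathrm{tor}}$ and containing $P'$. I would bound this from above by taking $V$ to be (a component of) the Galois orbit of $P'$ over $k_{\mathrm{tor}}$, which is a zero-dimensional subvariety defined over $k_{\mathrm{tor}}$ of degree at most $[k_{\mathrm{tor}}(P'):k_{\mathrm{tor}}] = [k_{\mathrm{tor}}(P):k_{\mathrm{tor}}]$; then $\codim_B V = \dim B$ and $\deg_{\mathcal{L}} V \ll [k_{\mathrm{tor}}(P):k_{\mathrm{tor}}]$, so
$$\omega_{k_{\mathrm{tor}}}(P',B) \ll \left(\frac{[k_{\mathrm{tor}}(P):k_{\mathrm{tor}}]}{\deg B}\right)^{1/\dim B}.$$
Substituting this upper bound into the denominator of Carrizosa's estimate (the function $t \mapsto t^{-1}(\log\log(\cdots)/\log(\cdots))^{\kappa}$ being decreasing, so an upper bound on $\omega$ yields a lower bound on the right-hand side), and noting $(\deg_{\mathcal{L}} B)^2 \ll (\deg B)^2$, produces a lower bound of the shape
$$\hat h(P) \gg \frac{(\deg B)^{1/\dim B}}{[k_{\mathrm{tor}}(P):k_{\mathrm{tor}}]^{1/\dim B}} \cdot \left(\frac{\log\log M}{\log M}\right)^{\kappa(\dim B)}$$
where $M \ll [k_{\mathrm{tor}}(P):k_{\mathrm{tor}}]^{1/\dim B}(\deg B)^{2-1/\dim B}$ is polynomially bounded in the two relevant quantities.

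Finally, the logarithmic factor must be absorbed into the $\eta$-loss. Since $\dim B \le N$ is bounded and hence $\kappa(\dim B) \le \kappa(N)$ is an absolute constant, and since $M$ is bounded by a fixed power of $[k_{\mathrm{tor}}(P):k_{\mathrm{tor}}]$ and $\deg B$, the factor $(\log\log M/\log M)^{\kappa(\dim B)}$ is, for any $\eta > 0$, bounded below by a constant times $(\deg B)^{-\eta\dim B'} [k_{\mathrm{tor}}(P):k_{\mathrm{tor}}]^{-\eta\dim B''}$ for suitable bounded exponents — more simply, it dominates any negative power of $M$, so we can absorb it by decreasing the exponent $1/\dim B$ to $1/\dim B - \eta$ in the numerator and increasing it to $1/\dim B + \eta$ in the denominator. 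This yields exactly the claimed inequality with a constant $c_2$ depending only on $E^N$ and $\eta$, which is effective whenever Carrizosa's constant is. I expect the only genuinely delicate point to be the verification that $P - \zeta$ is not torsion modulo every proper abelian subvariety of $B$, i.e. that minimality of the torsion variety $B+\zeta$ is exactly the hypothesis Carrizosa needs; the rest is the standard bookkeeping of converting $\omega_{k_{\mathrm{tor}}}$ to a field degree and swallowing the slowly-varying logarithmic term into $\eta$.
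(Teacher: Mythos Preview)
Your proposal is correct and follows essentially the same approach as the paper: translate $P$ by $-\zeta$ into $B$, use minimality of $B+\zeta$ to verify Carrizosa's hypothesis, bound $\omega_{k_{\mathrm{tor}}}(P-\zeta,B)$ by testing with the $k_{\mathrm{tor}}$-Galois orbit of $P-\zeta$ (so $\codim_B V=\dim B$ and $\deg V=[k_{\mathrm{tor}}(P):k_{\mathrm{tor}}]$), and absorb the $(\log\log/\log)^{\kappa(g_0)}$ factor into the $\eta$-loss using that $\dim B\le N$. The paper presents exactly these steps in a slightly different order and with less detail on the monotonicity and absorption points you spell out.
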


The effectivity in the relative Lehmer is not explicitly stated in the theorem of Carrizosa.\footnote{In a short personal communication she claims that her constants are effective.} Using also the effectivity of other results, as for instance the result of Amoroso and Zannier in \cite{AZ}\footnote{Though in \cite{AZ} the authors are not concerned with effectivity, their result can be made effective (see \cite{ADelsinne}).} and of David and Hindry \cite{davidhindry}, one may check that Carrizosa's constant can be made effective. In addition, the complicated descent in her article can be replaced by the simple induction argument presented for tori in \cite{amoviadacomm}. An analogous effective method for the relative Lehmer in tori is given by Delsinne, see \cite{Delsinne}.

\section{Torsion anomalous varieties which are  not translates}\label{seztanontraslati}
Let $V$ be a weak-transverse variety in a   power of elliptic curves.
In this section we prove the finiteness of the maximal $V$-torsion anomalous varieties which are not translates and have relative codimension one, thus establishing part (\ref{mainprima}) of Theorem \ref{codimensionerelativauno}. Note that this  part of our theorem holds in any   power of elliptic curves, independently if  it has or not CM. 

\begin{thm}
\label{weakstrict}
Let $V\subseteq E^N$ be a weak-transverse variety. Then the   maximal   $V$-torsion anomalous varieties of relative codimension one which are  not translates  are finitely many. More precisely, let $Y$ be a maximal $V$-torsion anomalous variety which is not a translate. Assume that $Y$ has  relative codimension one in its minimal $B+\zeta$. Then for any $\eta>0$ there exist  constants depending only on $E^N$ and $\eta$ such that: 
\begin{itemize}
\item[] $$ \deg B \ll_\eta   (h(V)+ \deg V)^{\frac{\codim B}{\codim V -1}+\eta}, $$
\item[] $$h(Y)  \ll_\eta   (h(V)+ \deg V)^{\frac{\codim B}{\codim V-1}+\eta} $$ and
\item[]  $$ \deg Y \ll_\eta \deg V (h(V)+ \deg V)^{\frac{\codim B}{\codim V-1} -1+\eta}.$$
 \end{itemize}
 In addition  the torsion points $\zeta$ belong to a  finite set.
 
\end{thm}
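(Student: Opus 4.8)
The plan is to exploit the relative codimension one hypothesis to force a very rigid geometry: since $Y$ is maximal $V$-torsion anomalous with minimal $B+\zeta$ and $\codim_{B+\zeta} Y = 1$, by Lemma \ref{wt} the variety $Y$ is weak-transverse in $B+\zeta$, and $Y$ is a hypersurface section of the torsion variety $B+\zeta$. By Lemma \ref{cruciale} we may also view $Y$ as a component of $V\cap (A+\zeta)$ for suitable $A\supseteq B$. First I would record the defining data of $B+\zeta$ from Section \ref{torsione}: $B+\zeta$ is cut out by forms $h_1,\dots,h_r$ (rows of $\varphi_B$) of degrees $d_1\le\cdots\le d_r$, with $\deg B \asymp d_1\cdots d_r$. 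The idea is that, because $Y$ has codimension only one inside $B+\zeta$ while having codimension $\codim V + \codim B - 1$ in the ambient space (from the anomalous inequality being tight enough), $Y$ is essentially obtained by intersecting $V$ with $\codim B$ of these forms, and one of them — the smallest, by the ordering — can be "saved": intersecting $V$ with only $h_1,\dots,h_{r-1}$ (or a suitable sub-collection) already produces $Y$ as a component, by Lemma \ref{cruciale} applied to the subgroup cut out by those forms.

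**The two arithmetic inputs.** Next I would run the Bézout–Zhang machinery. Iterating the Arithmetic Bézout theorem (Theorem \ref{bezout}) to intersect $V$ successively with the hypersurfaces $h_i=0$, and using $\mu(\cdot)\le h(\cdot)/\deg(\cdot)$ together with the fact that each $h_i$ has height $\ll \deg h_i\log(\cdots)$ negligible compared to its degree (the hypersurfaces $h_i=0$ are torsion, hence of height essentially zero up to the embedding constants), I get
$$h(Y)\ll (h(V)+\deg V)\, d_1\cdots d_{r-1}\quad\text{and}\quad \deg Y \ll \deg V\, d_1\cdots d_{r-1},$$
after absorbing constants depending on $E^N$. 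Here the crucial point is that we intersect with $r-1 = \codim B - 1$ hypersurfaces, not $\codim B$ of them, because the last intersection with $h_r$ does not cut the dimension (it is the one making $Y$ anomalous, cf. the relative codimension one condition); the $h_r$-step is absorbed by the anomalous inequality rather than by a Bézout bound. Then I would invoke the sharp effective Bogomolov bound (Theorem \ref{due}), applicable precisely because $Y$ is \emph{not} a translate and is weak-transverse in $B+\zeta$ (so, after a translation, transverse in a translate of $B$): this gives
$$\mu(Y)\gg_\eta \frac{(\deg B)^{\frac{1}{\dim B-\dim Y}-\eta}}{(\deg Y)^{\frac{1}{\dim B-\dim Y}+\eta}} = \frac{(\deg B)^{1-\eta}}{(\deg Y)^{1+\eta}},$$
using $\dim B - \dim Y = 1$. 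Combining this lower bound with the upper bound $\mu(Y)\le h(Y)/\deg Y$ and the Bézout estimates, and writing $\deg B \asymp d_1\cdots d_r$, yields an inequality relating $d_r$ (equivalently $\deg B / (d_1\cdots d_{r-1})$) to $(h(V)+\deg V)$ and the $d_i$'s; solving it bounds $\deg B$, then feeds back to bound $h(Y)$ and $\deg Y$.

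**Extracting the exponents and finiteness.** The bookkeeping to get the stated exponent $\frac{\codim B}{\codim V - 1}$ is where care is needed. From the anomalous dimension inequality with relative codimension one, $\codim Y = \codim V + \codim B - 1$, while $Y$ being weak-transverse in $B+\zeta$ of dimension $\dim B = N - \codim B$ forces $\dim Y = \dim B - 1$. I would combine $\mu(Y)\gg_\eta (\deg B)^{1-\eta}(\deg Y)^{-1-\eta}$ with $\mu(Y)\le h(Y)/\deg Y$ to get $\deg B \ll_\eta h(Y)^{1+\eta}(\deg Y)^{-\eta}\ll_\eta h(Y)^{1+\eta}$, and then substitute the Bézout bound $h(Y)\ll (h(V)+\deg V)\,(\deg B)/d_r$; since $d_r$ is at least a constant, a first pass gives $\deg B \ll_\eta (h(V)+\deg V)^{1+\eta}(\deg B)^{1+\eta}/d_r^{1+\eta}$, which is not yet conclusive — one must instead track the \emph{optimal} choice of which hypersurface to omit and use that $d_r\ge (\deg B)^{1/r}=(\deg B)^{1/\codim B}$ (the largest degree dominates the $r$-th root of the product). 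That replaces $d_r$ by $(\deg B)^{1/\codim B}$ and turns the self-referential inequality into $\deg B \ll_\eta (h(V)+\deg V)^{?}$ with the exponent sorting out to $\frac{\codim B}{\codim V-1}$; the degree and height bounds for $Y$ then follow by back-substitution into the Bézout estimates, giving the $+\eta$-perturbed exponents in the statement. Finiteness of $Y$: once $\deg B$ is bounded there are finitely many abelian subvarieties $B$; for each, the torsion points $\zeta$ for which $B+\zeta$ is minimal for some maximal $V$-torsion anomalous component form a finite set by Lemma \ref{sottogruppo}; and for each pair $(B,\zeta)$ the variety $V\cap(B+\zeta)$ has finitely many components. \textbf{The main obstacle} I anticipate is making the "omit the largest hypersurface" step rigorous — i.e.\ showing that $Y$ really is a component of $V$ intersected with only $r-1$ of the defining forms (this is what Lemma \ref{cruciale} is engineered for, applied to the subgroup $A$ cut out by $h_1,\dots,h_{r-1}$, checking the codimension hypothesis $\codim A \ge \dim V - \dim Y$), and controlling the resulting self-referential inequality in $\deg B$ so that the exponent comes out exactly as claimed rather than something weaker.
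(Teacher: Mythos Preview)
Your overall strategy --- combine the effective Bogomolov lower bound on $\mu(Y)$ with an Arithmetic B\'ezout upper bound via Zhang's inequality --- is exactly the paper's, and the invocation of Lemma \ref{wt}, Lemma \ref{cruciale}, and Lemma \ref{sottogruppo} is correct. But there is a genuine gap in the choice of the auxiliary subgroup $A$, and it stems from a dimensional error.

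You write ``$\codim Y = \codim V + \codim B - 1$'', but this is false in general: relative codimension one means $\dim Y = \dim B - 1$, hence $\codim Y = \codim B + 1$, independently of $\codim V$. The anomalous inequality $\codim Y < \codim V + \codim B$ then reads $1 < \codim V$, so the ``defect'' is $\codim V - 1$, not $1$. Consequently it is not that \emph{one} form $h_r$ fails to cut the dimension; rather, $\codim V - 1$ of the $r$ forms are redundant dimensionally. Your plan to omit only $h_r$ and keep $h_1,\dots,h_{r-1}$ gives $\deg A_0 \ll (\deg B)^{(r-1)/r}$, and running your inequalities through yields $(\deg B)^{1/r - O(\eta)} \ll (h(V)+\deg V)$, i.e.\ $\deg B \ll_\eta (h(V)+\deg V)^{r+\eta}$. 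This is strictly weaker than the claimed exponent $r/(\codim V - 1)$ whenever $\codim V > 2$, so the argument as written does not prove the theorem.

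The fix, and what the paper does, is to keep only the \emph{first} $r_1 := \dim V - \dim Y$ forms --- the minimum number for which Lemma \ref{cruciale} applies (the hypothesis $\codim A \ge \dim V - \dim Y$ is then an equality). Since the $d_i$ are increasing, $d_1\cdots d_{r_1} \ll (\deg B)^{r_1/r}$, and the B\'ezout bound becomes $h(Y)\ll (h(V)+\deg V)(\deg B)^{r_1/r}$. Combining with Bogomolov and Zhang, and using $\deg Y \le \deg V \deg B$ from the geometric B\'ezout on $V\cap(B+\zeta)$, one gets $(\deg B)^{(r-r_1)/r - O(\eta)} \ll (h(V)+\deg V)$. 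Now $r - r_1 = \codim B - (\dim V - \dim B + 1) = N - \dim V - 1 = \codim V - 1$, and the stated exponent $r/(\codim V - 1)$ drops out. The bounds on $h(Y)$ and $\deg Y$ then follow by back-substitution exactly as you outline.
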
 

\begin{proof}
Let $Y$ be a maximal  $V$-torsion anomalous  variety which is  not a translate. Let $B+\zeta$ be minimal for $Y$. Then $Y$ is a component  of $V\cap (B+\zeta)$ with $B$ an abelian variety and $\zeta$ a torsion point. In addition $\codim Y<\codim V+\codim B$.

We prove that $\deg B$ is bounded only in terms of  $V$ and $E^N$; we then deduce the bounds  for ${h}(Y)$ and $\deg Y$.
 By assumption $Y$ is not a translate, and it has codimension one in $B+\zeta$; therefore $Y$ is transverse in $B+\zeta$. As points are translates and $V$ is not contained in a torsion variety, we have $1\leq \dim Y<\dim V.$ 
Applying the Bogomolov estimate Theorem \ref{due} to $Y$ in $B+\zeta$ we get
\begin{equation}\label{sotto}\frac{(\deg B)^{1-\eta}}{(\deg Y)^{{1}+\eta}}\ll_\eta \mu(Y).\end{equation}

We set $r=\codim B$. Let $h_1, \dots , h_r$ be the forms of increasing degrees $d_i$ such that   $B+\zeta$ is a component of their zero set,  as recalled  in Section \ref{torsione}. Then 
\begin{equation}\label{gradi}
 d_1 \cdots d_r \ll \deg (B+\zeta)=\deg B  \ll  d_1 \cdots d_r.
\end{equation}

 We denote $$r_1=\dim V-\dim Y.$$ Note that $r_1< r$, because $Y$ is $V$-torsion anomalous.
Let $A$ be the algebraic subgroup given by the first $h_1\cdots h_{r_1}$ forms. 
Then $\deg A \ll d_1 \cdots d_{r_1} .$
 Let $A_0$ be an irreducible component of $A$ containing $B+\zeta$.
 We remind that we have ordered the $h_i$ in such a way that the sequence $d_1,\dotsc,d_r$ is increasing, thus the successive geometric means are increasing too, which gives, together with \eqref{gradi}, the bound
 $$\deg A_0  \ll d_1 \cdots d_{r_1}\ll (d_1\dotsm d_r)^\frac{r_1}{r}\ll (\deg B)^\frac{r_1}{r}.$$ 
We also have $\codim A_0=r_1=\dim V-\dim Y$. 
By Lemma \ref{cruciale},  $Y$ is a component of $V\cap A_0$. We apply the Arithmetic B\'ezout theorem to $V\cap A_0$ and recall that $h(A_0)=0$, because $A_0$  is a torsion variety. Then 
\begin{equation}\label{sopra}h(Y) \ll (h(V)+ \deg V) \deg A_0\ll \cVv (\deg B)^{\frac{r_1}{r}}.\end{equation} 
For the irreducible variety $Y$ of positive dimension, 
Zhang's inequality \eqref{zhang} says
$$\mu(Y)\leq \frac{h(Y)}{\deg Y}.$$
Combining this with \eqref{sopra} and \eqref{sotto} we obtain
$$\frac{{(\deg B)}^{1-\eta}}{(\deg Y)^{1+\eta}}\ll_\eta\mu(Y)\ll  (h(V)+ \deg V)\frac{ (\deg B)^\frac{r_1}{r}}{\deg Y}.$$

Recall that $Y$ is a component of $ V\cap (\sotto+\zeta)$. By B\'ezout's theorem $\deg Y \le \deg \sotto  \deg V$.  Thus 
\[{(\deg B)}^{1-\eta}\ll_\eta  (h(V)+\deg V)(\deg B)^\frac{r_1}{r}{(\deg \sotto \deg V)}^{\eta}\] and therefore \[{(\deg B)^{\frac{r-r_1}{r}-2\eta}}\ll_\eta  (h(V)+\deg V){ (\deg V)}^{\eta}.\]

 Since $r-r_1=\codim V -1$, for $\eta$ small enough  we get
 \begin{equation}
 \label{forte}
 \deg B \ll_\eta   (h(V)+ \deg V)^{\frac{r}{\codim V -1}+\eta}(\deg V)^{\eta} .
 \end{equation}

So we  have proved that the degree of $B$ is bounded only in terms of $V$ and $E^N$. 
\\ Since the  abelian subvarieties of bounded degree are finitely many, applying Lemma \ref{sottogruppo}  we conclude that $\zeta$ belongs to a finite set. \\Finally, the bound on the  height of $Y$ is given by  (\ref{sopra}) and (\ref{forte}) 
$$h(Y)\ll_\eta \cVv^{\frac{r}{\codim V -1}+\eta}(\deg V)^{\eta}.$$
The bound on the degree is given by  B\'ezout's theorem for the component $Y$ of $V\cap A_0$ and (\ref{forte}) \[\deg Y\ll_\eta \cVv^{\frac{r}{\codim V -1}-1+\eta}(\deg V)^{1+\eta}.\qedhere\]
 \end{proof}

\section{Torsion anomalous points}\label{seztadimzero}   In this and the following section we prove  that, if $V$ is a weak-transverse variety in a   power of elliptic curves  with CM, then the maximal $V$-torsion anomalous varieties {of relative codimension one} which are  translates are non-dense in $V$. {We  now prove this statement in the case of $V$-torsion anomalous varieties of dimension zero, thus establishing part (\ref{mainterza}) of Theorem \ref{codimensionerelativauno}}. The proof relies on the Arithmetic B\'ezout theorem, the Zhang's inequality and on the relative Lehmer, Theorem \ref{carri2}. {Recall that, as} the last bound is proved only for CM elliptic curves we need this assumption.

\begin{thm}
\label{tadimzero}
Let $V\subseteq E^N$ be a  weak-transverse variety, where $E$ has CM. Then, the set of maximal  $V$-torsion anomalous points of relative codimension one is a finite set of explicitly bounded {N\'eron-Tate} height and relative degree. 

More precisely, let $k$ be a field of definition for $E$ and let $k_\mathrm{tor}$ be the field of definition of all torsion points of $E^N$.  Let $d$ be the dimension of $V$. Let $Y_0$ be a maximal $V$-torsion anomalous point and let $B+\zeta$ be minimal for $Y_0$, with $\dim B=1$. Then
\begin{align*}
 \deg B &\ll_\eta (\cV)^{\frac{N-1}{N-1-d}+\eta} ,\\
\hat{h}(Y_0)  &\ll_\eta (h(V)+\deg V)^{\frac{N-1}{N-1-d}+\eta}[k_{\mathrm{tor}}(V):k_{\mathrm{tor}}]^{\frac{d}{N-1-d}+\eta},\\
[k_{\mathrm{tor}}(Y_0):k_{\mathrm{tor}}] &\ll_\eta \deg V[k_\mathrm{tor}(V):k_\mathrm{tor}]^{\frac{N-1}{N-1-d}+\eta}{(h(V)+\deg V)}^{\frac{d}{N-1-d}+\eta}.
\end{align*}
In addition the torsion points $\zeta$  have order effectively bounded in Theorem \ref{tadimzero2}.
\end{thm}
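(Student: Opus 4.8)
The plan is to follow the same blueprint as the proof of Theorem \ref{weakstrict}, but replacing the Bogomolov-type input (Theorem \ref{due}), which only applies to positive-dimensional varieties, with the relative Lehmer bound of Theorem \ref{carri2}, which is the right tool for points. Let $Y_0$ be a maximal $V$-torsion anomalous point with minimal torsion variety $B+\zeta$; since $Y_0$ has relative codimension one, $\dim B=1$, so $B$ is an elliptic curve isogenous to $E$ sitting in $E^N$ and $Y_0-\zeta$ is a non-torsion point of $B$ (minimality of $B+\zeta$ forces this). First I would set $r=\codim B=N-1$ and $r_1=\dim V-\dim Y_0=d$; note $r_1<r$ because $Y_0$ is $V$-torsion anomalous, i.e. $d<N-1$, which is exactly the hypothesis making the exponents positive. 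As in Section \ref{torsione}, write $B+\zeta$ as a component of the zero set of forms $h_1,\dots,h_r$ of increasing degrees $d_i$ with $d_1\cdots d_r\ll\deg B\ll d_1\cdots d_r$, let $A_0$ be an irreducible component containing $B+\zeta$ of the subgroup cut out by the first $h_1\cdots h_{r_1}$ forms, so that $\codim A_0=r_1=\dim V-\dim Y_0$ and $\deg A_0\ll d_1\cdots d_{r_1}\ll(\deg B)^{r_1/r}$.

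Next I would run the Arithmetic B\'ezout theorem (Theorem \ref{bezout}) on $V\cap A_0$: by Lemma \ref{cruciale}, $Y_0$ is a component of $V\cap A_0$, and since $h(A_0)=0$ we get $\hat h(Y_0)=h(Y_0)\ll(h(V)+\deg V)\deg A_0\ll(h(V)+\deg V)(\deg B)^{r_1/r}$. On the other hand, the relative Lehmer bound of Theorem \ref{carri2} applied to $Y_0-\zeta$ in $B$ gives
\[
\hat h(Y_0)\gg_\eta\frac{(\deg B)^{\frac{1}{\dim B}-\eta}}{[k_\mathrm{tor}(Y_0):k_\mathrm{tor}]^{\frac{1}{\dim B}+\eta}}=\frac{(\deg B)^{1-\eta}}{[k_\mathrm{tor}(Y_0):k_\mathrm{tor}]^{1+\eta}},
\]
using $\dim B=1$. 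I also need to control the relative degree $[k_\mathrm{tor}(Y_0):k_\mathrm{tor}]$: since $Y_0$ is a point of $V\cap A_0$ and $A_0$ is defined over $k_\mathrm{tor}$ (all subgroups are, by our conventions), a conjugate-counting / B\'ezout argument over $k_\mathrm{tor}$ bounds $[k_\mathrm{tor}(Y_0):k_\mathrm{tor}]$ by $\deg A_0\cdot[k_\mathrm{tor}(V):k_\mathrm{tor}]\cdot\deg V\ll(\deg B)^{r_1/r}[k_\mathrm{tor}(V):k_\mathrm{tor}]\deg V$, absorbing the $\deg V$ into an $\eta$-power via $\deg Y_0\le\deg B\deg V$.

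Combining the upper bound from B\'ezout with the lower bound from Lehmer yields
\[
\frac{(\deg B)^{1-\eta}}{[k_\mathrm{tor}(Y_0):k_\mathrm{tor}]^{1+\eta}}\ll_\eta(h(V)+\deg V)(\deg B)^{r_1/r},
\]
and substituting the bound for $[k_\mathrm{tor}(Y_0):k_\mathrm{tor}]$ and using $r-r_1=\codim V-1\geq 1$ (here $\codim V=N-d$, so $r-r_1=N-1-d$), one isolates $\deg B$: after collecting the $\eta$-contributions and shrinking $\eta$, one gets $\deg B\ll_\eta(\cV)^{\frac{N-1}{N-1-d}+\eta}$. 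Feeding this back into the B\'ezout bound for $\hat h(Y_0)$ and into the bound for $[k_\mathrm{tor}(Y_0):k_\mathrm{tor}]$ gives the stated height and relative-degree estimates, with the bookkeeping $\frac{r_1}{r}\cdot\frac{r}{r-r_1}=\frac{d}{N-1-d}$ producing the exponents on $[k_\mathrm{tor}(V):k_\mathrm{tor}]$. Finiteness of the set of such $Y_0$, and of the admissible $\zeta$, then follows: $\deg B$ bounded means finitely many $B$, and Lemma \ref{sottogruppo} (applied with the abelian subvariety $B$) gives finitely many $\zeta$; the explicit bound on the order of $\zeta$ is deferred to Theorem \ref{tadimzero2}. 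The main obstacle I anticipate is the careful tracking of the relative degree $[k_\mathrm{tor}(Y_0):k_\mathrm{tor}]$ through the B\'ezout step — it appears on both sides of the key inequality (inside the Lehmer denominator and as the quantity to be bounded), so one must make sure the exponent $1+\eta$ on it does not destroy the final exponent, which is why the argument only closes when $N-1-d>0$ and forces the precise shape of the exponents.
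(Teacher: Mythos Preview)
Your overall architecture is exactly that of the paper --- the same auxiliary torsion variety $A_0$, Lemma \ref{cruciale}, Arithmetic B\'ezout, and Theorem \ref{carri2} in place of Theorem \ref{due} --- but there is a genuine gap at the step you yourself flag as delicate, and as written the argument does not close.

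The issue is your combination of the B\'ezout upper bound $\hat h(Y_0)\ll(h(V)+\deg V)(\deg B)^{r_1/r}$ with the Lehmer lower bound $\hat h(Y_0)\gg_\eta(\deg B)^{1-\eta}/[k_\mathrm{tor}(Y_0):k_\mathrm{tor}]^{1+\eta}$. When you then substitute your B\'ezout bound $[k_\mathrm{tor}(Y_0):k_\mathrm{tor}]\ll(\deg B)^{r_1/r}[k_\mathrm{tor}(V):k_\mathrm{tor}]\deg V$, the denominator contributes a \emph{full} power $(1+\eta)$ of $(\deg B)^{r_1/r}$, so the exponent left on $\deg B$ is $1-2r_1/r$ (up to $\eta$), i.e.\ $(N-1-2d)/(N-1)$. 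This is only positive when $d<(N-1)/2$, and even then yields $\deg B\ll_\eta(\cV)^{\frac{N-1}{N-1-2d}+\eta}$, which does not match the statement. Your claim that the argument closes whenever $N-1-d>0$ is therefore not justified.

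The paper avoids this loss by applying the Arithmetic B\'ezout theorem not to $Y_0$ alone but to the \emph{sum over all conjugates of $Y_0$ over $k_\mathrm{tor}(V)$}, each of which is a separate component of $V\cap A_0$. This bounds the product $[k_\mathrm{tor}(Y_0):k_\mathrm{tor}]\,\hat h(Y_0)\ll\cV(\deg B)^{r_1/r}$ directly. Combined with Lehmer this gives $(\deg B)^{1-\eta}/[k_\mathrm{tor}(Y_0):k_\mathrm{tor}]^{\eta}\ll\cV(\deg B)^{r_1/r}$: the exponent on $[k_\mathrm{tor}(Y_0):k_\mathrm{tor}]$ has dropped from $1+\eta$ to $\eta$, and now your geometric B\'ezout bound on $[k_\mathrm{tor}(Y_0):k_\mathrm{tor}]$ costs only an $\eta$-power of $\deg B$, leaving the correct exponent $(r-r_1)/r=(N-1-d)/(N-1)$. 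The rest of your plan (feeding back, Lemma \ref{sottogruppo}) then goes through as you describe.
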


\begin{proof}
Let $Y_0$ be a  maximal  $V$-torsion anomalous point, with $\sotto+\zeta$ minimal for $Y_0$.

By assumption $\dim B=\codim_{B+\zeta}Y_0=1$. We proceed to bound $\deg B$ and, in turn, the height of $Y_0$ and its degree over $k_\mathrm{tor}$.
 To this aim we shall use Theorem \ref{carri2} and the Arithmetic B\'ezout theorem.

By Section \ref{torsione}, the variety  $\sotto+\torsione$ is a component of the  torsion variety defined as the zero set of  forms $h_1,\dots,h_{N-1}$ of increasing degrees $d_i$ and $$d_1 \cdots d_{N-1}\ll\deg \sotto=\deg (\sotto+\torsione)\ll d_1 \cdots d_{N-1}.$$
Consider the torsion variety defined as the zero set of the first  $d$ forms $h_1, \dots, h_d$, and take  a connected component $A_0$ containing $\sotto+\torsione$.  Then 
\begin{equation}\label{degA0}
\deg A_0\ll d_1 \cdots d_d\ll{(\deg B)}^{\frac{d}{N-1}}
\end{equation} and $$\codim A_0=d=\dim V-\dim Y_0.$$

By Lemma \ref{cruciale},  each component of $V\cap (B+\zeta)$  is a component of $V\cap A_0$.     
All conjugates of $Y_0$ over $k_\mathrm{tor}(V)$ are in $V\cap (B+\zeta)$, so the number of components of $V\cap A_0$ of height $\hat{h}(Y_0)$ is at least $$[k_\mathrm{tor}(V,Y_0):k_\mathrm{tor}(V)]\geq \frac{[k_\mathrm{tor}(Y_0):k_\mathrm{tor}]}{[k_\mathrm{tor}(V):k_\mathrm{tor}]}.$$

We then apply the Arithmetic B\'ezout theorem to $V\cap A_0$ obtaining
\begin{equation}\label{arbez61}
[k_\mathrm{tor}(Y_0):k_\mathrm{tor}]\hat{h}(Y_0)\ll {\cV}(\deg B)^{\frac{d}{N-1}}.
\end{equation}

 Applying Theorem \ref{carri2} to $Y_0$ in $B+\zeta$, we obtain that for every positive real $\eta$

\begin{equation}\label{car61}
\hat{h}(Y_0)\gg_\eta \frac{(\deg B)^{1-\eta}}{[k_{\mathrm{tor}}(Y_0):k_{\mathrm{tor}}]^{1+\eta}}.
\end{equation}

Combining \eqref{car61} and \eqref{arbez61}
we have
\begin{align*}
\frac{(\deg B)^{1-\eta}}{[k_{\mathrm{tor}}(Y_0):k_{\mathrm{tor}}]^{\eta}}&\ll_\eta[k_{\mathrm{tor}}(Y_0):k_{\mathrm{tor}}]\hat{h}(Y_0)\ll\\
&\ll {\cV}(\deg B)^{\frac{d}{N-1}}.
\end{align*}
For $\eta$ small enough we obtain
\begin{equation}\label{degB61}
\deg B\ll_\eta {(\cV)}^{\frac{N-1}{N-1-d}+\eta}[k_{\mathrm{tor}}(Y_0):k_{\mathrm{tor}}]^\eta.
\end{equation}
 Apply now B\'ezout's theorem to $V\cap A_0$. All the conjugates of $Y_0$ over $k_\mathrm{tor}(V)$ are components of the intersection, so\begin{equation}\label{boundktor61}
\frac{[k_{\mathrm{tor}}(Y_0):k_{\mathrm{tor}}]}{[k_{\mathrm{tor}}(V):k_{\mathrm{tor}}]}\ll_\eta {(\cV)}^{\frac{d}{N-1-d}+\eta} (\deg V)^{1+\eta},
\end{equation}
which gives the last bound in the statement. Substituting \eqref{boundktor61} back into \eqref{degB61} we have the bound on $\deg B$.

Finally apply  the Arithmetic B\'ezout theorem to $V\cap A_0$ to get
\begin{equation}\label{hY061}
\hat{h}(Y_0)\ll (h(V)+\deg V)(\deg B)^{\frac{d}{N-1}}\ll_\eta(h(V)+\deg V)^{\frac{N-1}{N-1-d}+\eta}[k_{\mathrm{tor}}(V):k_{\mathrm{tor}}]^{\frac{d}{N-1-d}+\eta}.
\end{equation}

 Having bounded $\deg B$, in view of Lemma \ref{sottogruppo}, the points $\zeta$ belong to a finite set.
\end{proof} 
Notice that in Theorem \ref{tadimzero} we have effectively bounded the degree of the abelian variety $B$, and we  applied Lemma \ref{sottogruppo} to prove the finiteness of the points $Y_0$ in a non effective way. In the following theorem we explicitly bound the degree $[k(Y_0):\qe]$ for $Y_0$ not a torsion point. This, together with  the bound for $\hat{h}(Y_0)$ in Theorem \ref{tadimzero}, allows to effectively find all $V$-torsion anomalous points of relative codimension one. 

\begin{thm}\label{tadimzero2}
Let $V$ be a  weak-transverse variety  in $E^N$, where $E$ has CM. Let $k$ be a field of definition for $E$. Let $d$ be the dimension of $V$. Let $Y_0$ be a maximal $V$-torsion anomalous point and let $B+\zeta$ be minimal for $Y_0$ with $\dim B=1$. Then
$$[k(Y_0):\mathbb{Q}]\ll_\eta {(\cV)}^{\frac{d(N-1)}{(N-1-d)^2}+\eta}{(\deg V[k(V):k])}^{\frac{N-1}{N-1-d}+\eta}.$$

In addition  the torsion points $\zeta$ can be chosen with  $$[k(\zeta):\mathbb{Q}]\ll [k(Y_0):\mathbb{Q}]$$ and order  bounded by 
$$\mathrm{ord}(\zeta)\ll_\eta [k(Y_0):\mathbb{Q}]^{\frac{N}{2}+\eta}.$$ 

Finally let $S$ be the number of maximal $V$-torsion anomalous points of relative codimension one. Then
$$S\ll_\eta {(\cV)}^{A_1+\eta}{(\deg V)}^{A_2+1+\eta}[k(V):k]^{A_2+\eta}$$
where
\begin{align*}
A_1&= \frac{(N-1)(2(N+1)(N-d-1)+dN(2N+1))}{2(N-d-1)^2} \leq  (N+1)^4,\\
A_2&= \frac{N(N-1)(2N+1)}{2(N-d-1)} \leq N^3.
 \end{align*}
\end{thm}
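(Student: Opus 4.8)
The plan is to prove Theorem \ref{tadimzero2} by refining the argument of Theorem \ref{tadimzero}, replacing the non-effective appeal to Lemma \ref{sottogruppo} with explicit degree bounds. First I would bound $[k(Y_0):\qe]$: since $Y_0$ is a point, all its conjugates over $k_\mathrm{tor}(V)$ lie in $V\cap(B+\zeta)$, hence by Lemma \ref{cruciale} are components of $V\cap A_0$, where $A_0$ is the component (containing $B+\zeta$) of the subgroup cut out by the first $d$ forms $h_1,\dots,h_d$ defining $B$. B\'ezout applied to $V\cap A_0$ gives $[k_\mathrm{tor}(Y_0):k_\mathrm{tor}]\ll \deg V\cdot\deg A_0\ll \deg V\,(\deg B)^{d/(N-1)}$, and substituting the bound $\deg B\ll_\eta(\cV)^{(N-1)/(N-1-d)+\eta}$ from \eqref{degB61}--\eqref{boundktor61} of Theorem \ref{tadimzero} yields
\begin{equation*}
[k_\mathrm{tor}(Y_0):k_\mathrm{tor}]\ll_\eta (\cV)^{\frac{d}{N-1-d}+\eta}(\deg V)^{1+\eta}.
\end{equation*}
One then passes from $k_\mathrm{tor}$ to $\qe$ via $[k(Y_0):\qe]\leq[k(Y_0):k][k:\qe]$ and $[k(Y_0):k]\leq[k(Y_0):k_\mathrm{tor}(Y_0)][k_\mathrm{tor}(Y_0):k_\mathrm{tor}][k_\mathrm{tor}:k]$; the first factor is absorbed into $[k(V):k]$ up to bounded constants (all abelian subvarieties and endomorphisms being defined over $k$ by Section \ref{seznotaz}), so the stated exponents $\frac{d(N-1)}{(N-1-d)^2}$ and $\frac{N-1}{N-1-d}$ emerge after rewriting the $k_\mathrm{tor}$-bound in terms of $[k(V):k]$.

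Next I would bound the torsion point $\zeta$. Since $B+\zeta$ is minimal for $Y_0$ and $Y_0$ is a point, $\zeta$ can be taken in the (finite) fibre of $Y_0$ under $B+\mathrm{Tor}$, so $k(\zeta)\subseteq k(Y_0)$ up to bounded index and $[k(\zeta):\qe]\ll[k(Y_0):\qe]$. For the order: a torsion point of order $m$ in $E^N$ generates over $k_\mathrm{tor}$ — hence over $\qe$ up to the fixed factor $[k_\mathrm{tor}:k][k:\qe]$, but one must be careful since $k_\mathrm{tor}$ is infinite — a subfield whose degree grows like a fixed power of $m$; concretely, using that $E$ has CM and the standard estimate $[\qe(E[m]):\qe]\gg m^{2-\epsilon}$ (or the cruder $[k(\zeta):\qe]\gg \mathrm{ord}(\zeta)^{2/N-\epsilon}$ for a coordinate of order comparable to $\mathrm{ord}(\zeta)$), one inverts to get $\mathrm{ord}(\zeta)\ll_\eta[k(Y_0):\qe]^{N/2+\eta}$. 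This is the step I expect to be the most delicate: one needs the right lower bound for the degree of a torsion point of $E^N$ over the base field in terms of its order, with the correct exponent $N/2$, and to track that $\zeta$ really can be chosen in $k(Y_0)$ rather than a larger field.

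Finally, for the count $S$ of maximal $V$-torsion anomalous points of relative codimension one, I would observe that each such $Y_0$ is a component of some $V\cap A_0$ with $\deg A_0\ll(\deg B)^{d/(N-1)}$ and $\deg B\ll_\eta(\cV)^{(N-1)/(N-1-d)+\eta}(\deg V)^{\eta}$; by Lemma \ref{sottogruppo} only finitely many $B$ (equivalently finitely many $\zeta$, now with explicitly bounded order) occur, and for each the number of points in $V\cap A_0$ is at most $\deg V\,\deg A_0$ by B\'ezout. Summing over the $\zeta$'s — whose number is at most the number of torsion points of bounded order, i.e. $\ll \mathrm{ord}(\zeta)^{2N}$, itself polynomial in $[k(Y_0):\qe]$ and hence in the data of $V$ — gives $S\ll_\eta(\cV)^{A_1+\eta}(\deg V)^{A_2+1+\eta}[k(V):k]^{A_2+\eta}$ after collecting exponents. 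The explicit values of $A_1$ and $A_2$, together with the claimed crude bounds $A_1\leq(N+1)^4$ and $A_2\leq N^3$, follow by substituting $d\leq N-2$ (forced, since $Y_0$ is a point of positive relative codimension in a proper $V$) and simplifying; the bookkeeping of exponents is routine but must be done carefully so that the three independent quantities $h(V)+\deg V$, $\deg V$, and $[k(V):k]$ receive the exponents stated.
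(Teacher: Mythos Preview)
Your approach to the first bound has a genuine gap. You obtain (correctly, as in Theorem \ref{tadimzero}) a bound for $[k_\mathrm{tor}(Y_0):k_\mathrm{tor}]$ via B\'ezout on $V\cap A_0$, and then try to pass to $[k(Y_0):\qe]$. But $k_\mathrm{tor}/k$ is an \emph{infinite} extension, so there is no way to convert a bound over $k_\mathrm{tor}$ into a bound over $k$ by multiplying by $[k_\mathrm{tor}:k]$; in fact the inequality goes only in the wrong direction, $[k_\mathrm{tor}(Y_0):k_\mathrm{tor}]\le [k(Y_0):k]$, never the reverse. Your sentence ``the first factor is absorbed into $[k(V):k]$ up to bounded constants'' has no content, and the claimed exponents $\tfrac{d(N-1)}{(N-1-d)^2}$ and $\tfrac{N-1}{N-1-d}$ cannot ``emerge by rewriting'' from the smaller ones you have. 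The point is that $A_0$ is a torsion coset, defined only over $k_\mathrm{tor}$, so counting components of $V\cap A_0$ cannot see conjugates over $k$.

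The missing idea, which the paper supplies, is to build a new algebraic subgroup $G$ of codimension $d$ \emph{defined over $k$}, containing $Y_0$, with explicitly controlled degree. Since $\dim B=1$, the coordinates of $Y_0$ generate an $\mathrm{End}(E)$-module of rank one: write $x_i=\alpha_i g+\beta_i T$ with $g$ of infinite order and $T$ torsion of exact order $R$. Siegel's lemma over $\mathcal{O}=\mathrm{End}(E)$ (as in \cite{ioannali}, following \cite{BMZ_tori}) produces $d$ independent linear relations with coefficients in $\mathrm{End}(E)$, defining $G$ with $\deg G\ll\bigl(R^2\,\hat h(Y_0)/\hat h(g)\bigr)^{d/(N-1)}$. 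Now $Y_0$ is a component of $V\cap G_0$ with $G_0$ defined over $k$, so B\'ezout gives $[k(Y_0):k]\le [k(V):k]\deg V\deg G_0$. One bounds $1/\hat h(g)$ by Carrizosa and $\hat h(Y_0)[k_\mathrm{tor}(Y_0):k_\mathrm{tor}]$ by \eqref{arbez61}; the torsion order $R$ is handled by Serre's lower bound $R^{2-\eta}\ll[k(Y_0):k]$. This feeds $[k(Y_0):k]^{d/(N-1)+\eta}$ back into the right-hand side, and since $d<N-1$ one solves the resulting inequality for $[k(Y_0):k]$, obtaining precisely the stated exponents. Your sketches for $\zeta$ and for $S$ are closer in spirit to the paper's argument, but they rest on the bound for $[k(Y_0):\qe]$ and so inherit the same gap.
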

\begin{proof}
In view of Theorem \ref{tadimzero}, we know that $\deg B$ and $\hat{h}(Y_0)$ are bounded.
We now proceed to bound $[k(Y_0):k]$. 

To this aim we need to construct an algebraic subgroup $\sbgrpG$ of codimension $d$ defined over $k$, containing $Y_0$ and of controlled degree. In order to do this we use Siegel's lemma in a similar way as in \cite{ioannali}, Proposition 3, which in turn follows the work \cite{BMZ_tori} of Bombieri, Masser and Zannier in tori. Here we use Siegel's lemma directly on equations with coefficients in the endomorphism ring of $E$.

We notice that $\mathrm{End}(E)$  is an order in an imaginary quadratic field $L$ with ring of integers $\mathcal{O}$.
 The coordinates of $Y_0=(x_1,\ldots,x_N)$ generate an $\mathcal{O}$-module $\Gamma$ of rank one. The torsion submodule of $\Gamma$ is  well known; for instance in  \cite{ioannali}, Proposition 2  we find a description for it. Such a torsion module is  clearly  $\mathcal{O}$ invariant. As a $\mathbb{Z}$-module  it is generated by two points $T,\tau T$ of exact orders $R,R'$ respectively,  where $R=c(\tau)R'$ and $c(\tau)$ is essentially the real part of $\tau^2$, so a constant of the problem. 

Therefore  we can write
$$x_i=\alpha_i g +\beta_i T$$
for a fixed point of infinite order $g$ in $E$, with coefficients $\alpha_i,\beta_i\in\mathcal{O}$ such that 
\begin{equation}\label{ai}
\hat{h}(x_i)=|N_L(\alpha_i)|\hat{h}(g)
\end{equation}
and $$N_L(\beta_i)\ll R^2.$$

\medskip

We want to find coefficients $a_i\in\mathcal{O}$ such that $\sum_i^N a_i x_i=0$.
This gives a linear system of 2 equations, obtained equating to zero the coefficients of $g$ and $T$. The system has coefficients in $\mathcal{O}$ and $N+1$ unknowns: the $a_i$'s and one more unknown for the congruence relation arising from the torsion point.

We use a version of Siegel's lemma over $\mathcal{O}$ as stated in \cite{BiGi}, Section 2.9, to get $d$ equations with coefficients in $\mathcal{O}$; multiplying them by a constant depending only on $E$, we may assume that they have coefficients in $\mathrm{End}(E)$. Thus they define the sought-for algebraic subgroup $\sbgrpG$ of degree
$$\deg \sbgrpG\ll \left((\max_i N_L(\alpha_i))(\max_i N_L(\beta_i)) \right)^{\frac{d}{N-1}}.$$
Let $\sbgrpG_0$ be a $k$-irreducible component of $\sbgrpG$ passing through $Y_0$.
Then $$\deg \sbgrpG_0\ll \left((\max_i N_L(\alpha_i)) R^2 \right)^\frac{d}{N-1}.$$

By the maximality of $Y_0$, the point  $Y_0$ is a component of $V\cap \sbgrpG_0$  and by B\'ezout's theorem we get  $$\frac{[k(Y_0):k]}{[k(V):k]}\leq \deg V\deg \sbgrpG_0.$$
Using also \eqref{ai}, we obtain
\begin{alignat}{2}
\frac{[k(Y_0):k]}{[k(V):k]}&\ll \deg V \left((\max_i N_L(\alpha_i)) R^2 \right)^\frac{d}{N-1}\leq \label{erreesse}\\
&\leq \deg V \left(R^2\frac{ \max_i \hat{h}(x_i)}{\hat{h}({g})}\right)^\frac{d}{N-1}.\notag
\end{alignat}
Notice that $\hat{h}(x_i)\leq\hat{h}(Y_0)$ for all $i$.

We can now apply Theorem \ref{carri2} to $g$ in $E$, obtaining for every $\eta>0$
\begin{equation}\label{ratazzi61}
\frac{1}{\hat{h}(g)}\ll_\eta {[k_\mathrm{tor}(g):k_\mathrm{tor}]^{1+\eta}}\leq [k_\mathrm{tor}(Y_0):k_\mathrm{tor}]^{1+\eta}
\end{equation}
because $g$ is defined over $k(Y_0)$.

The product $[k_\mathrm{tor}(Y_0):k_\mathrm{tor}]\hat{h}(Y_0)$ was bounded in \eqref{arbez61}, so using also the bounds in Theorem \ref{tadimzero} we obtain
\begin{equation}\label{boundcombinato62}
 [k(Y_0):k]\ll_\eta \deg V [k(V):k](\cV)^{\frac{d}{N-1-d}+\eta}R^{2\frac{d}{N-1}}.
\end{equation}

 By a result of Serre in \cite{serre}, recalled also in \cite{ioannali}, Corollary 3,  for $R$ larger than a constant and  $\varphi$  the Euler function, we have $$\varphi(R)\varphi(R')\ll [k(Y_0):k] .$$ In addition $$R^{2-\eta}\ll_\eta\varphi(R)\varphi(R')$$ since in general $\varphi(x)\gg_\eta x^{1-\eta}$ and $R$ and $R'$ are related by a constant.
From this and \eqref{boundcombinato62}, for $\eta$ small enough we obtain
\begin{equation}\label{erreesse2}
[k(Y_0):k]\ll_\eta  \deg V [k(V):k]{(\cV)}^{\frac{d}{N-d-1}+\eta} [k(Y_0):k]^{\frac{d}{N-1}+\eta}.
\end{equation}
Since $Y_0$ is $V$-torsion anomalous of relative  codimension one, we have $d<N-1$ and we deduce 
\begin{align}
&[k(Y_0):\mathbb{Q}]\ll[k(Y_0):k] \ll_\eta    \notag\\
&\ll_\eta ({\deg V[k(V):k]})^{\frac{N-1}{N-d-1}+\eta}{(\cV)}^{\frac{d(N-1)}{(N-d-1)^2}+\eta}.\label{bY0}
\end{align}

 We now want to bound the degree of $\zeta$ over $k$ and the order of $\zeta$. 
 Let $K$ be the field of definition of $B+\zeta$. Notice that $K\subseteq k(\zeta)$: in fact if $\sigma\in \mathrm{Gal}(\overline{k}/k(\zeta))$, then $\sigma(B+\zeta)=B+\zeta$. We are going to prove that $[k(\zeta):K]$ is absolutely bounded and that $[K:k]\leq [k(Y_0):k]$. 

\medskip

From \cite{bertrand}, we can choose $\zeta$ in a complement $B'$ of $B$ such that $B\cap B'$ has cardinality bounded  only in terms of $N$. 
Now let $\sigma\in \mathrm{Gal}(\overline{k}/K)$ and suppose that $\sigma(\zeta)\neq \zeta$. Then $\sigma(B+\zeta)=B+\sigma(\zeta)=B+\zeta$,  because $B$ is defined over $k$. Since $\zeta, \sigma(\zeta)\in B'$ we have $\sigma(\zeta)-\zeta\in B\cap B'$.  So $[k(\zeta):K]\ll 1$.

\medskip

We also notice that $K\subseteq k(Y_0)$, otherwise we would have a $\sigma\in\Gal(\overline{k}/k)$ such that $\sigma(Y_0)=Y_0$, but $\sigma(B+\zeta)\neq B+\zeta$. If this were the case, $Y_0$ would be a component of $V\cap(B+\zeta)\cap \sigma(B+\zeta)$, against the minimality of $B+\zeta$.

 Thus $$[k(\zeta):k]=[k(\zeta):K][K:k]\ll [K:k]\leq [k(Y_0):k].$$
In view of \eqref{bY0},  $\zeta$ generates an extension of $k$ of bounded degree. By Serre's result mentioned above 
\begin{equation}\label{ordz}
\textrm{ord}(\zeta)\ll_\eta [k(Y_0):\mathbb{Q}]^{\frac{N}{2}+\eta}.
\end{equation}

We are left to give an explicit bound for the number of maximal $V$-torsion anomalous points $Y_0$ of relative codimension one. This is obtained in the following way: we first bound the number of possible subgroups $B$ and possible torsion points $\zeta$ such that $B+\zeta$ is minimal for some $Y_0$. Then we apply B\'ezout's theorem to every intersection $V\cap (B+\zeta)$. 

We already proved in Theorem \ref{tadimzero} and \eqref{ordz} that if $B+\zeta$ is minimal for $Y_0$, then $\deg B$ and $\textrm{ord}(\zeta)$ are bounded.

By  Section \ref{torsione}, the number of  abelian subvarieties  $B$ in $E^N$ of dimension one and degree at most $\deg B$ is $\ll_\eta({\deg B})^{N+\eta}$, for every $\eta>0$.  In fact, if $B$ is such a abelian subvariety, consider its associated matrix, as recalled in Section \ref{torsione}; it is an $(N-1)\times N$ matrix and we call $d_i$  the square of the norm of its $i$-th row. Then the number of possible choices for the elements $d_i$ is bounded by   $\delta(\deg B)^{N-1}$, where, for a positive integer $n$, $\delta(n)$ counts the number of divisors of $n$.
We notice that, for every $\eta>0$ we have $$ \delta(\deg B)\ll_\eta (\deg B)^{\eta}.$$
Now, for every choice of the $d_i$, the number $D$ of $(N-1)\times N$ matrices in which  the square of the norm of the $i$-th row is at most $d_i$ is bounded in the following way $$D\ll \left(\prod_{i=1}^{N-1} d_i\right)^N\ll ({\deg B})^N.$$

So for every $\eta>0$, the number of possible subgroups $B$ is $\ll_\eta(\deg B)^{N+\eta}.$

As for the point $\zeta$, it is well known that the number of torsion points in $E^N$ of order bounded by a constant $M$ is at most $M^{2N+1}$. In fact the number of points of order dividing a positive integer $i$ is $i^{2N}$; so a bound for the number of torsion points of order at most $M$  is given by $$\sum_{i=1}^M i^{2N}\ll M^{2N+1}.$$

Applying B\'ezout's theorem to every intersection $V\cap (B+\zeta)$, we obtain that for every $\eta>0$ the number $S$ of $V$-torsion anomalous points  of relative codimension one is bounded by $$S\ll_\eta \deg V (\deg B)^{N+1+\eta}  {\textrm{ord}(\zeta)}^{2N+1}.$$
This, combined with Theorem \ref{tadimzero}, \eqref{ordz} and \eqref{bY0}, gives the required explicit bounds.
\end{proof}

\section{Torsion anomalous translates of positive dimension}\label{seztadimposi}

We let $V$ be a weak-transverse variety in a   power of elliptic curves  with CM. In this section we study $V$-torsion anomalous varieties which are translates of positive dimension. We  reduce this case to the zero dimensional case.

First we compare the $V$-torsion anomalous translates with translates contained in $V$.
\begin{lem}\label{maximal_trans}
Let $V$ be a weak-transverse subvariety of an abelian variety  of dimension $N$. 
Let $Y$ be a maximal $V$-torsion anomalous translate, then  $Y$ is a maximal translate contained in $V$ (i.e. $Y$ is not  strictly contained in any translate contained in $V$).
\end{lem}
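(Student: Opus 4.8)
The plan is to argue by contradiction, exploiting the group structure exactly as in the proof of Theorem \ref{wtrans}. Suppose $Y$ is a maximal $V$-torsion anomalous translate but that $Y$ is strictly contained in some translate $H+q$ which is itself contained in $V$. Write $Y$ as a translate $Y = C + y$ with $C$ a proper abelian subvariety and $y$ a point, and recall that since $Y$ is $V$-torsion anomalous there is a minimal torsion variety $B+\zeta$ with $Y$ a component of $V \cap (B+\zeta)$ and $\codim Y < \codim V + \codim B$. The idea is to produce a strictly larger $V$-torsion anomalous translate containing $Y$, contradicting maximality; the natural candidate is built from $H+q$ together with the subgroup $B$.

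First I would observe that, replacing $H+q$ by the smallest translate containing $Y$ and contained in $V$, we may assume $Y \subsetneq H+q \subseteq V$ with $H$ an abelian subvariety, so $\dim H > \dim Y$. Next, since $Y \subseteq (H+q) \cap (B+\zeta)$ and both are translates, the same computation as in the proof of Theorem \ref{wtrans} shows that $(H+q)\cap(B+\zeta)$ is itself a translate of $H \cap B$ by a point, of dimension $\dim(H\cap B) \geq \dim Y$. Now consider the translate $W := (H+q)\cap(B+\zeta)$. It is contained in $V$ (being contained in $H+q \subseteq V$) and it is a component of $V \cap (B+\zeta)$ of dimension $\dim(H \cap B)$. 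I would then check that $W$ is $V$-torsion anomalous: indeed
\[
\codim W = N - \dim(H\cap B) = \codim H + \codim B - \codim(H+B) \leq \codim B < \codim V + \codim B,
\]
using $\codim H \leq \codim(H+B)$ since $H \subseteq H+B$, together with the fact that $H + q \subseteq V$ forces... — here I need $\codim H \geq \codim V$ is false in general, so instead I use that $W$ is a component of $V \cap (B+\zeta)$ and compare with $Y$: since $Y \subseteq W$ and $Y$ is a component of $V\cap(B+\zeta)$ with $\dim Y \leq \dim W$, and both lie inside $V\cap(B+\zeta)$, if $\dim W > \dim Y$ then $W$ is a strictly larger subvariety of $V\cap(B+\zeta)$ containing $Y$, hence $W$ is $V$-torsion anomalous (its codimension is at most that of $Y$, which already satisfies the strict inequality), contradicting the maximality of $Y$. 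This forces $\dim W = \dim Y$, hence $W = Y$ by irreducibility, hence $Y = (H+q)\cap(B+\zeta) \subseteq H+q$; but then $H+q$ is a translate strictly containing $Y$ — contradiction with $Y = W \subseteq H+q$ only if the inclusion is strict, which was our standing assumption.

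The step I expect to be the main obstacle is making the dimensional bookkeeping airtight: one must be careful that $W$ genuinely satisfies the strict anomaly inequality and not merely $\codim W \le \codim V + \codim B$, and one must ensure that passing from an arbitrary translate containing $Y$ in $V$ to a minimal such one, or intersecting with $B+\zeta$, does not destroy irreducibility or the property of being a component of $V \cap (B+\zeta)$. The clean way to organize this is: (i) reduce to $H+q$ minimal containing $Y$ inside $V$; (ii) show $(H+q)\cap(B+\zeta)$ is an irreducible translate containing $Y$ with $\dim \geq \dim Y$; (iii) use maximality of $Y$ among $V$-torsion anomalous varieties to collapse this intersection to $Y$ itself; (iv) conclude $Y = (H+q)\cap(B+\zeta)$, and derive a contradiction with $Y \subsetneq H+q$ by noting $Y \subseteq H+q$ would then be forced to be proper only if $\dim(H\cap B) < \dim H$, i.e. $B \not\supseteq H$ — and if $B \supseteq H$ then $H + \zeta \supseteq Y$ is a torsion variety of dimension $> \dim Y$ making $Y$... contained in a smaller torsion variety, contradicting Lemma \ref{wt}. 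Tracking which of these sub-cases actually arises is the delicate part; everything else is the same translate-intersection identity already used for Theorem \ref{wtrans}.
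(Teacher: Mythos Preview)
Your argument has a genuine gap. You correctly establish that $W := (H+q)\cap(B+\zeta)$ is an irreducible translate of $H\cap B$ containing $Y$; since $Y$ is already a component of $V\cap(B+\zeta)$ and $W$ is an irreducible subset of $V\cap(B+\zeta)$ containing $Y$, one gets $W = Y$ immediately. But this equality is \emph{not} a contradiction: it simply records $\dim Y = \dim(H\cap B)$. Your attempt in step (iv) to squeeze a contradiction out of $Y = (H+q)\cap(B+\zeta)$ together with $Y\subsetneq H+q$ does not succeed in the generic case $H\not\subseteq B$: there you only recover $\dim(H\cap B) < \dim H$, which is the hypothesis $Y\subsetneq H+q$ restated. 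Nothing you have written prevents $H+q$ from sitting inside $V$ with $B+\zeta$ cutting it transversally down to a strictly smaller $Y$. (Incidentally, the inequality ``$\codim H \le \codim(H+B)$ since $H\subseteq H+B$'' in your first attempt is backwards, which you in effect noticed when you abandoned that line.)

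The idea you are missing, and which the paper uses, is to \emph{enlarge} the torsion variety rather than stay inside $B+\zeta$. Pass to $H+B+\zeta$. The bound $\dim(H\cap B)\ge\dim Y$ (which you did prove), the anomaly inequality $\codim Y<\codim V+\codim B$, and the trivial $\dim H\le\dim V$ combine to give
\[
\dim(H+B)=\dim H+\dim B-\dim(H\cap B)\le\dim H+\dim B-\dim Y\le \dim V+\dim B-\dim Y<N,
\]
so $H+B+\zeta$ is a \emph{proper} torsion subvariety. Since $(H+q)\cap(B+\zeta)\ne\emptyset$ forces $q\in H+B+\zeta$, the whole translate $H+q$ lies in $V\cap(H+B+\zeta)$. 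The same chain of inequalities rearranges to $\codim(H+q)<\codim V+\codim(H+B)$, so the component of $V\cap(H+B+\zeta)$ through $H+q$ is $V$-torsion anomalous of dimension at least $\dim H>\dim Y$, contradicting the maximality of $Y$. The candidate for the larger anomalous variety was $H+q$ all along, not $W$.
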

\begin{proof}

Let $Y$ be a maximal $V$-torsion anomalous translate and suppose that $Y$ is  contained in a maximal translate $(H+p)\subseteq V$ with $\dim(H+p)> \dim Y $.
Let $B+\zeta $ be minimal for $Y$. Then $Y$ is a component of $V\cap (B+\zeta)$  and \begin{equation}\label{disuno}\codim Y <\codim V +\codim B .\end{equation}

Since $(H+p)\cap (B+\zeta)\supseteq Y$ then $p=h+b+\zeta$ for some $h\in H$ and $b\in B$. 
Therefore $$(H+p)\cap (B+\zeta)=(H+b+\zeta)\cap (B+\zeta)=(H+b+\zeta)\cap (B+b+\zeta)=(H\cap B)+b+\zeta$$ and $$\dim(H\cap B)=\dim((H+p) \cap (B+\zeta))\ge \dim Y.$$
By (\ref{disuno}),  we deduce  \begin{equation}\begin{split}\label{distre}\dim(H+B)&=\dim H +\dim B -\dim(H\cap B)\\
 & \le \dim H +\dim B -\dim Y <N.\end{split}\end{equation}
So $H+B+ \zeta$ is a proper torsion subvariety of the ambient variety.
Moreover $$H+p=H+h+b+\zeta \subseteq H+B+\zeta.$$
 Thus $$H+p\subseteq V\cap (H+B+\zeta).$$ 
 
 By (\ref{disuno}) and  (\ref{distre}), we deduce 
 \begin{equation*}\begin{split}N-\dim H &< N-\dim V + N -\dim H- \dim B+\dim Y\\
 &\leq N-\dim V + N -\dim H- \dim B+\dim (B\cap H)\\&=N-\dim V + N -\dim (H+B).\end{split}\end{equation*}
 This implies that $H+p$ is a $V$-torsion anomalous translate, against the maximality of $Y$.
\end{proof}

The following lemma is {due to Patrice Philippon and it  relates the essential minimum of a translate to the height of the point of translation. More in general, for abelian varieties properties of orthogonality in the Mordell-Weil groups have been studied by D. Bertrand \cite{BertOrto}.}
\begin{lem}[Philippon]\label{minimoess}
Let $H+Y_0$ be a weak-transverse translate in  $E^N$, with  $Y_0$ a point in  the orthogonal complement $H^\perp$ of $H$. Then $$ \mu(Y_0)= \mu(H+Y_0).$$

\end{lem}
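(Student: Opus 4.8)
The plan is to pin down $\mu(H+Y_0)$ by squeezing it between $\mu(Y_0)$ from above and below, the two inputs being that the N\'eron--Tate height is a quadratic form for which $H$ and its orthogonal complement $H^\perp$ are genuinely orthogonal, and that $H$, being an abelian subvariety, is a torsion variety and so has vanishing essential minimum.

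First I would note that for the point $Y_0$ one has $\mu(Y_0)=\hat h(Y_0)$: a non-empty subset of the zero-dimensional variety $\{Y_0\}$ is automatically dense in it, so $\{x\in\{Y_0\}\mid \hat h(x)\le\theta\}$ is non-dense exactly when $\theta<\hat h(Y_0)$. Next, for the inequality $\mu(H+Y_0)\ge\mu(Y_0)$, I would write a point of $H+Y_0$ as $h+Y_0$ with $h\in H$ and use that, since $Y_0\in H^\perp$, the N\'eron--Tate bilinear pairing of $h$ with $Y_0$ vanishes; hence $\hat h(h+Y_0)=\hat h(h)+\hat h(Y_0)\ge\hat h(Y_0)$. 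Thus for $\theta<\hat h(Y_0)$ the set of points of $H+Y_0$ of height at most $\theta$ is empty, a fortiori non-dense, so $\mu(H+Y_0)\ge\hat h(Y_0)=\mu(Y_0)$.

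For the reverse inequality I would invoke the Bogomolov Conjecture (Ullmo--Zhang): $H$ contains the origin, hence is a torsion variety, hence $\mu(H)=0$, so for every $\varepsilon>0$ the set $\{h\in H\mid\hat h(h)\le\varepsilon\}$ is dense in $H$. Translating by $Y_0$ and using the same orthogonal splitting, $\{h+Y_0\mid\hat h(h)\le\varepsilon\}$ is a dense subset of $H+Y_0$ consisting of points of height at most $\hat h(Y_0)+\varepsilon$. Hence for every $\theta>\hat h(Y_0)$ the set of points of $H+Y_0$ of height at most $\theta$ is dense, giving $\mu(H+Y_0)\le\hat h(Y_0)=\mu(Y_0)$; combining the two bounds yields the claimed equality.

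The single non-formal point is the orthogonality invoked twice above, namely that along the isogeny decomposition $E^N\sim H\times H^\perp$ the quadratic form $\hat h$ has no cross term, so that heights add. This --- in the precise form needed, and more generally the relation between $\mu(H+P)$ and the height of the $H^\perp$-component of an arbitrary point $P$ --- is exactly what the Appendix by Philippon supplies; granting it, the present lemma is the special case $P=Y_0\in H^\perp$ and the rest is the elementary squeezing argument. I expect this orthogonality statement to be the only real obstacle; everything else is routine.
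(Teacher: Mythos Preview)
Your argument is correct and follows the same route as the paper's: both directions rest on the orthogonal splitting $\hat h(h+Y_0)=\hat h(h)+\hat h(Y_0)$ supplied by Philippon's appendix. The only cosmetic difference is that for the upper bound $\mu(H+Y_0)\le\hat h(Y_0)$ the paper uses the density of torsion points of $H$ (which have height exactly zero) rather than invoking Bogomolov to get points of height at most $\varepsilon$; this is the same idea, just slightly more direct, and in fact the implication you cite---that a torsion variety has essential minimum zero---is the easy direction, not the deep one proved by Ullmo and Zhang.
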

\begin{proof}
The points $Y_0+\zeta$, for $\zeta\in {\mathrm{Tor}}_{H}$, are dense in $H+Y_0$ and they have height equal to $\hat{h} (Y_0)$. So we get $\mu(H+Y_0)\le \mu(Y_0).$

To obtain the other inequality, consider  a set of points of the form ${x_i+Y_0}$ with $x_i \in H$, which is dense in $H+Y_0$. 

{Let 
\begin{equation*}
 \langle p,q\rangle_{NT}=\sum_{i=1}^N\frac{1}{2}\left( \hat{h}(p_i+q_i)-\hat{h}(p_i)-\hat{h}(q_i)\right)\in\mathbb{R}
\end{equation*} 
be the N\'eron-Tate pairing on $E(\overline{\qe})^N$, with $p=(p_1,\dotsc,p_N)$ and $q=(q_1,\dotsc,q_N)$ in $E(\overline{\qe})^N$.
In \cite{PhilipponLemmaPreprint},  P.~Philippon  proves that, given two {connected algebraic} subgroups $H'$ and $H''$ of $E^N$, then $H'(\overline{\qe})$ and $H''(\overline{\qe})$ are orthogonal, with respect to the N\'eron-Tate pairing, if and only if their tangent spaces at the origin are orthogonal in $\mathbb{C}^N$, with respect to the standard hermitian product. 

Using this result}  we get  $\hat{h} (x_i+Y_0)= \hat{h}(x_i) +\hat{h}(Y_0) \le \hat{h}(Y_0) $. \end{proof}

 We now state our main theorem for $V$-torsion anomalous translates, which proves part (\ref{mainseconda}) of Theorem \ref{codimensionerelativauno}. Let $H+Y_0$ be a maximal $V$-torsion anomalous translate of relative codimension one. The idea of the proof is to apply the functorial Lehmer-type bound by Carrizosa to the point $Y_0$ in the complement $H^\perp$ of $H$, so that the problem becomes zero dimensional. We then apply the Arithmetic B\'ezout theorem to $H+Y_0$ in the usual way. The link between $\mu(Y_0)$ and $\mu(H+Y_0)$ is then given by  Lemma \ref{minimoess}.
 Notice that when the translate is a point, the first two bounds in the statement reduce to the ones in Theorem \ref{tadimzero}, while the third is trivial. For this reason the theorem, as stated, partially generalises Theorem \ref{tadimzero}; its interest, however, lies in the case of translates of positive dimension, and the proof only deals with those, as the case of points is already proved by Theorem \ref{tadimzero}.

\begin{thm}
\label{trasla}
Let $V$ be a weak-transverse subvariety of $E^N$, where $E$ has CM. Then the set of  maximal $V$-torsion anomalous translates of relative codimension one is a finite set of explicitly bounded normalized height and degree.

More precisely, let $k$ be a field of definition for $E$  and let $k_\mathrm{tor}$ be the field of definition of all torsion points of $E^N$.
Let  $H$ be an abelian subvariety of $E^N$ and let $p$ be a point such that $H+p$ is  a maximal $V$-torsion anomalous translate of relative  codimension one. Let  $B+\zeta$ be  minimal for  $H+p$.  Then, for every real positive $\eta$  there exist  constants depending only on $E^N$ and $\eta$, such that 
\begin{align*}
 \deg B&\ll_\eta {(\cV)}^{\frac{\codim B}{\codim V-1}+\eta},\\
h(H+p)&\ll_\eta {(h(V)+\deg V)}^{\frac{\codim B}{\codim V-1}+\eta}{[k_\mathrm{tor}(V):k_\mathrm{tor}]}^{\frac{\dim V-\dim B +1}{\codim V-1}+\eta},\\
\deg (H+p)&\ll_\eta (\deg V){(\cV)}^{\frac{\dim V-\dim B +1}{\codim V-1}+\eta}.
\end{align*}

In addition the points $\zeta$ belong to a finite set (of cardinality absolutely bounded in Theorem \ref{trasla2}).
\end{thm}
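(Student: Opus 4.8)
The plan is to reduce the positive-dimensional translate to the zero-dimensional case treated in Theorem \ref{tadimzero} and then run the same B\'ezout/Lehmer bookkeeping. Let $Y=H+p$ be a maximal $V$-torsion anomalous translate of relative codimension one, and let $B+\zeta$ be minimal for it. By Lemma \ref{wt}, $Y$ is weak-transverse in $B+\zeta$; since its relative codimension is one, $B+\zeta$ is then the smallest torsion variety containing $Y$, so $H\subseteq B$ and $\dim B-\dim H=1$, and moreover $Y$ cannot be torsion (it would be a proper torsion subvariety of $B+\zeta$ containing itself, contradicting weak-transversality). First I would set up the geometry of the reduction: write $p=p_1+Y_0$ with $p_1\in H$ and $Y_0$ in the orthogonal complement $H^\perp$ of $H$, so $Y=H+Y_0$ and $\deg(H+p)=\deg H$; by the Bogomolov Conjecture $Y_0$ is a non-torsion point, its minimal torsion variety $B_0+\zeta_0$ is contained in $H^\perp$ (being contained in every torsion variety through $Y_0$), hence $H\cap B_0$ is finite, $B=H+B_0$, $\dim B_0=1$, and $\deg B\ll\deg H\cdot\deg B_0$.

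Next I would derive the two opposing estimates for $\mu(Y)$. For the upper one, take $A_0\supseteq B$ to be a component of the zero set of the $r_1=\dim V-\dim Y$ smallest-degree forms among the $r=\codim B$ forms cutting out $B+\zeta$, so that $\codim A_0=r_1$ and $\deg A_0\ll(\deg B)^{r_1/r}$; by Lemma \ref{cruciale}, $Y$ is a component of $V\cap A_0$. Applying the Arithmetic B\'ezout theorem to $V\cap A_0$ (recalling $h(A_0)=0$ and that all $k_\mathrm{tor}(V)$-conjugates of $Y$ occur as components), combined with Zhang's inequality \eqref{zhang} in the form $h(Y)\ge\deg(H+p)\,\mu(Y)$ and with Lemma \ref{minimoess} applied inside $B$, which gives $\mu(Y)=\hat{h}(Y_0)$, yields
\[
[k_\mathrm{tor}(Y_0):k_\mathrm{tor}]\,\deg(H+p)\,\hat{h}(Y_0)\ \ll\ \cV\,(\deg B)^{r_1/r}.
\]
For the lower one, Carrizosa's relative Lehmer bound (Theorem \ref{carri2}) applied to $Y_0$ with its one-dimensional minimal torsion variety $B_0$ gives, for every $\eta>0$,
\[
\hat{h}(Y_0)\ \gg_\eta\ (\deg B_0)^{1-\eta}\,[k_\mathrm{tor}(Y_0):k_\mathrm{tor}]^{-1-\eta}.
\]

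Combining the two displays and using $(\deg B_0)^{1-\eta}\deg H\gg(\deg B)^{1-\eta}$ (from $\deg B\ll\deg H\cdot\deg B_0$ and $(\deg H)^{\eta}\ge1$), one obtains $(\deg B)^{1-r_1/r-\eta}\ll_\eta\cV\,[k_\mathrm{tor}(Y_0):k_\mathrm{tor}]^{\eta}$; since $r-r_1=\codim V-1$ and since $[k_\mathrm{tor}(Y_0):k_\mathrm{tor}]\ll\cV\,\deg V\,(\deg B)^{r_1/r}$ by B\'ezout on $V\cap A_0$, the $\eta$-powers absorb exactly as in the proof of Theorem \ref{tadimzero} and give $\deg B\ll_\eta {(\cV)}^{\codim B/(\codim V-1)+\eta}$. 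The bound for $h(H+p)$ is then $h(Y)\ll(h(V)+\deg V)(\deg B)^{r_1/r}$ from Arithmetic B\'ezout, after substituting the bound for $\deg B$, expanding $\cV$, and using $r_1=\dim V-\dim B+1$; the bound for $\deg(H+p)=\deg H\le\deg V\,\deg A_0$ follows from B\'ezout in the same way; and once $\deg B$ is effectively bounded, finitely many abelian subvarieties $B$ are possible, so Lemma \ref{sottogruppo} forces the $\zeta$ into a finite set, and hence there are only finitely many such $Y=H+p$.

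The delicate part will be the reduction to the zero-dimensional case carried out in the first paragraph: correctly identifying the minimal torsion variety $B_0$ of $Y_0$, proving $\dim B_0=1$ and $B=H+B_0$ with $H\cap B_0$ finite, establishing the degree comparison $\deg B\ll_N\deg H\cdot\deg B_0$ for such a sum, and justifying the use of Lemma \ref{minimoess} here --- $H+Y_0$ is weak-transverse only inside $B$, not in $E^N$, so one applies the lemma after translating by $-\zeta$ and working inside the abelian subvariety $B$. Everything downstream is a dimension-bookkeeping variant of the proof of Theorem \ref{tadimzero}, the only genuinely new inputs being this degree comparison and the observation (again as in Theorem \ref{tadimzero}) that the field-of-definition factor $[k_\mathrm{tor}(Y_0):k_\mathrm{tor}]$ enters the bound for $\deg B$ only with exponent $\eta$, so the apparent circularity is harmless.
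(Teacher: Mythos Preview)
Your overall strategy matches the paper's, but there is a genuine gap in the bookkeeping. The Arithmetic B\'ezout step counts $k_\mathrm{tor}(V)$-conjugates of the \emph{translate} $Y=H+Y_0$, so what it actually yields is
\[
[k_\mathrm{tor}(H+Y_0):k_\mathrm{tor}]\,\deg H\,\hat h(Y_0)\ \ll\ \cV\,(\deg B)^{r_1/r},
\]
not the same display with $[k_\mathrm{tor}(Y_0):k_\mathrm{tor}]$ in place of $[k_\mathrm{tor}(H+Y_0):k_\mathrm{tor}]$. Since $k_\mathrm{tor}(H+Y_0)\subseteq k_\mathrm{tor}(Y_0)$ (with strict inclusion possible, as a conjugate of $Y_0$ over $k_\mathrm{tor}(H+Y_0)$ can differ from $Y_0$ by a nonzero element of $H\cap H^\perp$), your display is strictly stronger than what B\'ezout gives. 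The same slip occurs when you write ``$[k_\mathrm{tor}(Y_0):k_\mathrm{tor}]\ll\cV\,\deg V\,(\deg B)^{r_1/r}$ by B\'ezout'': B\'ezout only controls $[k_\mathrm{tor}(H+Y_0):k_\mathrm{tor}]$. But Carrizosa's lower bound genuinely involves $[k_\mathrm{tor}(Y_0):k_\mathrm{tor}]$, so the two displays cannot be combined as you do.

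The paper bridges this with two extra ingredients you have not supplied. First, a Galois count: if $\sigma$ fixes $H+Y_0$ then $\sigma(Y_0)-Y_0\in H\cap B_0$, whence
\[
[k_\mathrm{tor}(Y_0):k_\mathrm{tor}]\ \le\ [k_\mathrm{tor}(H+Y_0):k_\mathrm{tor}]\cdot\sharp(H\cap B_0).
\]
Second, the \emph{exact} degree identity for the almost-direct sum $B=H+B_0$,
\[
\sharp(H\cap B_0)\,\deg B\ =\ \deg H\cdot\deg B_0,
\]
not merely your inequality $\deg B\ll\deg H\cdot\deg B_0$. The point is that the unwanted factor $\sharp(H\cap B_0)$ introduced by the Galois comparison is precisely $\deg H\,\deg B_0/\deg B$, so after substitution the main powers of $\deg H$ and $\deg B_0$ recombine into $\deg B$, leaving only $\eta$-powers of $\sharp(H\cap B_0)$ and $[k_\mathrm{tor}(H+Y_0):k_\mathrm{tor}]$; these are then bounded (via $\sharp(H\cap B_0)\le\sharp(H\cap H^\perp)\ll(\deg H)^2$ and B\'ezout) and absorbed as you say. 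With these two additions your argument becomes the paper's proof; without them the combination step does not close.
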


\begin{proof}
 If $\dim(H+p)=0$, then the theorem follows from Theorem \ref{tadimzero}; from now on we assume that $\dim H >0$. 

As we remarked in Section \ref{sezcarri}, we can assume all abelian subvarieties of $E^N$ to be defined over $k$.

Let $Y_0$ be a point in the  orthogonal complement $H^\perp$ of $H$,  such that $H+p=H+Y_0$.  By Lemma \ref{minimoess},  
\begin{equation}
\label{A}\mu(Y_0)=\mu(H+Y_0).
\end{equation} 

We are going to use the Arithmetic B\'ezout theorem to find an upper bound for $\mu(H+Y_0)$ and Theorem \ref{carri2} to find a lower bound.

Let $B+\zeta $ be minimal for $H+Y_0$.  Then $H+Y_0$ is a component of $V\cap (B+\zeta)$  and by assumption $\codim_{B+\zeta} (H+Y_0) =1$.  By Lemma \ref{wt} $H+Y_0$ is weak-transverse in $B+\zeta$, thus $Y_0$ is not a torsion point and the coordinates of $Y_0$ generate a module of rank $\dim(H^\perp\cap(B+\zeta))=\codim_{B+\zeta} (H+Y_0) =1$.

 Let $r$ be the codimension of $B$. The variety  $\sotto+\torsione$ is a component of the torsion variety given by the zero set of  the forms $h_1,\dots,h_{r}$ of increasing degrees $d_i$ and $$d_1 \cdots d_{r}\ll\deg \sotto=\deg (\sotto+\torsione)\ll d_1 \cdots d_{r}.$$ 
Note that $H+Y_0$ is contained in $V$ and, by assumption, it has relative codimension one. Thus $\codim V<\codim H=r+1$.  Denote $$r_1= r+1-\codim V= \dim V-\dim H.$$ 

Consider the torsion variety defined as the zero set of the first $r_1$ forms $h_1,\dots,h_{r_1}$, and take  one of its irreducible component $A_0$ passing through $H+Y_0$. Then
\begin{equation}\label{degA0t}
\deg A_0\ll  {(\deg B)}^{\frac{r_1}{r}}
\end{equation} and $$\codim A_0=r_1.$$

Recall that $k_\mathrm{tor}$ is the field of definition of all torsion points in $E^N$.
By Lemma \ref{cruciale},  $H+Y_0$   is a component of $V\cap A_0$. 
Since the intersection $V\cap A_0$ is defined over $k_\mathrm{tor}(V)$, every conjugate of $H+Y_0$ over $k_\mathrm{tor}(V)$  is a component of $V\cap A_0$ and all such components have the same normalized height.

 So the number of components of $V\cap A_0$  of height $h(H+Y_0)$  is at least $$ \frac{[k_\mathrm{tor}(H+Y_0):k_\mathrm{tor}]}{[k_\mathrm{tor}(V):k_\mathrm{tor}]}.$$

 We apply the Arithmetic B\'ezout theorem to  $V\cap A_0$ and we obtain 
\begin{equation}\label{arbez742}
{h}(H+Y_0)\frac{[k_\mathrm{tor}(H+Y_0):k_\mathrm{tor}]}{[k_\mathrm{tor}(V):k_\mathrm{tor}]}\ll (h(V)+\deg V) {(\deg B)}^{\frac{r_1}{r}}.
\end{equation}
By the Zhang's inequality, \eqref{A} and \eqref{arbez742}, we deduce 
\begin{equation}\label{n7t}
\mu(Y_0)\ll \frac{\cV {(\deg B)}^{\frac{r_1}{r}}}{[k_\mathrm{tor}(H+Y_0):k_\mathrm{tor}]\deg H}.
\end{equation}

Consider the minimal torsion subvariety $H_0$ of $B$ containing $Y_0-\zeta$, so $Y_0 $ is not contained in any torsion subvariety of $H_0+\zeta$. The dimension of $H_0$ is then the rank of the subgroup generated by the coordinates of $Y_0-\zeta$, which is the same as the rank of the subgroup generated by $Y_0$, because $\zeta$ is a torsion point; as we have seen, this rank is 1.

Consider the irreducible component $\sbgrpH$  of the intersection $ H^\perp\cap (B+\zeta)$ containing $Y_0$: since $Y_0$ is not torsion, then $\sbgrpH$ has dimension one. So $H_0+\zeta= \sbgrpH$, because both varieties are irreducible, contain $Y_0$ and are one dimensional. 
Let now $H_2$ be the translate of $H_0$ passing through 0. Then, as $H_0+\zeta=\sbgrpH$ is the component of $H^\perp\cap (B+\zeta)$ passing through $Y_0$, we see that $H_2$ is the component of $H^\perp\cap B$ passing through 0, \textit{i. e.} the orthogonal abelian subvariety of $H$ in $B$. Hence $H+H_2=B$.

\medskip

We also notice that
\begin{equation}\label{t73conj}
 [k_\mathrm{tor}(Y_0):k_\mathrm{tor}]\leq [k_\mathrm{tor}(H+Y_0):k_\mathrm{tor}]\cdot \sharp(H\cap H_2).
\end{equation}
In fact if $\sigma\in\Gal(\overline{k_\mathrm{tor}}/k_\mathrm{tor}(H+Y_0))$ then $\sigma(Y_0)-Y_0\in H$. 
Since $H_2+Y_0=H_0+\zeta$ is defined over $k_\mathrm{tor}$, we have that $\sigma(Y_0)-Y_0\in H_2$ as well. Hence the number of conjugates of $Y_0$ over $k_\mathrm{tor}(H+Y_0)$ is at most $\sharp(H\cap H_2)$.

\medskip

For the lower bound for $\mu(Y_0)$, the proof follows  the case of dimension zero. 
In particular applying Theorem \ref{carri2} to $Y_0$ in $H_0+\zeta$ we get that, for every positive real $\eta$
\begin{equation}\label{carrizosa74}
\mu(Y_0)=\hat{h}(Y_0)\gg_\eta \frac{(\deg H_0)^{1-\eta}}{[k_\mathrm{tor}(Y_0):k_\mathrm{tor}]^{1+\eta}}.
\end{equation}

Combining \eqref{n7t}, \eqref{t73conj} and \eqref{carrizosa74} we get
\begin{equation}\label{t73combining}
\begin{split}
(\deg H_0)^{1-\eta}\ll_\eta {\cV} \frac{(\deg B)^{\frac{r_1}{r}}}{\deg H}\cdot\\ \cdot\sharp(H\cap H_2)^{1+\eta}[k_\mathrm{tor}(H+Y_0):k_\mathrm{tor}]^{\eta}.
\end{split}
\end{equation}

 Recall now that $H_2$ is the orthogonal of $H$ in $B$; therefore, applying Theorem~3 of \cite{BertrandDuality} to the decomposition $B=H+H_2$ we get
 \begin{equation*}
\deg H \deg H_2 \ll\sharp(H\cap H_2)\deg B  \ll \deg H \deg H_2;
\end{equation*}
$H_0$ and $H_2$ have, of course, the same degree, and therefore 
\begin{equation}\label{t73gradiHH'}
\deg H \deg H_0 \ll\sharp(H\cap H_2)\deg B  \ll \deg H \deg H_0.
\end{equation}

Thus, possibly changing $\eta$, we have 
\begin{equation}\label{bougradoB}
\deg B\ll_\eta  (\cV)^{\frac{r}{\codim V-1}+\eta} \left( \sharp(H\cap H_2)[k_\mathrm{tor}(H+Y_0):k_\mathrm{tor}]\right)^{\eta}.
\end{equation}

We now want to remove the dependence on $\sharp(H\cap H_2)$ and  $[k_\mathrm{tor}(H+Y_0):k_\mathrm{tor}]$ and bound the degree of the translate $H+Y_0$.

We first apply B\'ezout's theorem to the intersection $V\cap A_0$, obtaining: 
\begin{equation}\label{t73bez}
 \deg H\leq \deg V\deg A_0\ll \deg V(\deg B)^\frac{r_1}{r}
\end{equation}
and estimating the components we obtain
\begin{equation}\label{t73bez2}
 \deg H\frac{[k_\mathrm{tor}(H+Y_0):k_\mathrm{tor}]}{[k_\mathrm{tor}(V):k_\mathrm{tor}]}\leq \deg V\deg A_0\ll \deg V(\deg B)^\frac{r_1}{r}.
\end{equation}

By definition $H_2\subseteq H^\perp\cap (B+\zeta)$, therefore $H\cap H_2 \subseteq H\cap H^\perp$ and,
\begin{equation*}
\sharp(H\cap H_2)\leq \sharp(H\cap H^\perp)\ll (\deg H)^2.
\end{equation*}

This and   \eqref{t73bez2} give 
\[
 \sharp(H\cap H_2)[k_\mathrm{tor}(H+Y_0):k_\mathrm{tor}]\ll ([k_\mathrm{tor}(V):k_\mathrm{tor}]\deg V)^2 (\deg B)^{\frac{2r_1}{r}}.
\]

Substituting in \eqref{bougradoB} we get
\begin{equation}\label{bougradoB2} \deg B\ll_\eta {(\cV)}^{\frac{r}{\codim V-1}+\eta}.\end{equation}
 Then,  using  \eqref{t73bez} and replacing \eqref{bougradoB2}  we have
\begin{align*}
 \deg(H+Y_0) &\ll (\deg V)(\deg B)^\frac{r_1}{r}\ll_\eta\\
&\ll_\eta (\deg V){(\cV)}^{\frac{r_1}{\codim V-1}+\eta}.
\end{align*}

Finally, from \eqref{arbez742}  and \eqref{bougradoB2} we obtain
\begin{equation*}\label{stimah74}
h(H+Y_0)\ll_\eta {(h(V)+\deg V)}^{\frac{r}{\codim V-1}+\eta}{[k_\mathrm{tor}(V):k_\mathrm{tor}]}^{\frac{r_1}{\codim V-1}+\eta}.
\end{equation*}

Since we have bounded $\deg B$, we can conclude from Lemma \ref{sottogruppo} that the points $\zeta$ belong to a finite set.

\end{proof} 
In the proof we bounded the degree of $H+Y_0$ using B\'ezout's theorem, and we obtained a bound depending on $\deg V$ and $h(V)$. The dependence on $h(V)$ may be removed  with a different argument.

 Let $H+p\subseteq V$ be a translate which is maximal with respect to the inclusion among all such translates. Bombieri and Zannier in \cite{BZ}, Lemma 2, proved that only finitely many such abelian subvarieties $H$ can occur. More precisely the maximal  translates contained in $V$ have degree bounded only in terms of the degree and the dimension of $V$. As a corollary of their proof we obtain the following lemma.
\begin{lem}[Bombieri-Zannier]\label{Manin}
Let $V$ be a   weak-transverse variety. Then the maximal translates contained in $V$ are of the form $H+p$ for finitely many abelian subvarieties $H$ with $$\deg H \leq (\deg V)^{2^{\dim V }}.$$ 
\end{lem}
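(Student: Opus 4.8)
The plan is to recover this from the proof of \cite{BZ}, Lemma~2, arguing by induction on $d=\dim V$ and keeping track of the degrees; weak-transversality plays no role, so I would prove the statement for an arbitrary irreducible subvariety $V$ of $E^N$. The base case $d=0$ is trivial (take $H=\{0\}$). For the inductive step let $H+p\subseteq V$ be a maximal translate; if $\dim H=0$ there is nothing to prove, so assume $\dim H\ge 1$, and set $G:=\mathrm{Stab}(V)^0$.

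I would split into two cases. If $H\subseteq G$, then $G+p\subseteq V$ since $V$ is $G$-invariant and $p\in V$, so maximality of $H+p$ forces $H=G$; hence this case produces a single abelian subvariety, namely $G=\mathrm{Stab}(V)^0$. To bound its degree I would use that $\bigcap_{x\in V}(V-x)=\mathrm{Stab}(V)$, so that $\mathrm{Stab}(V)=\bigcap_{i=1}^{k}(V-x_i)$ already for a bounded number $k\le d+1$ of generic points $x_i\in V$ (a routine Noetherianity argument: once $V$ has been cut down to the dimension of its stabilizer it is a union of cosets of $G$, and one further generic translate isolates the coset through $0$). Since translation preserves degree, the component-wise B\'ezout inequality in the ambient projective space gives $\deg G\le(\deg V)^{k}\le(\deg V)^{d+1}\le(\deg V)^{2^{\dim V}}$.

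If instead $H\not\subseteq\mathrm{Stab}(V)$, choose $h\in H$ with $V+h\ne V$. Then $H+p=(H+p)+h\subseteq (V+h)\cap V$, which is a proper closed subset of $V$; let $V'$ be an irreducible component of $V\cap(V+h)$ containing $H+p$. Then $1\le\dim H\le\dim V'\le d-1$, and the component-wise B\'ezout inequality gives $\deg V'\le(\deg V)(\deg(V+h))=(\deg V)^2$. Moreover $H+p$ is still a maximal translate inside $V'$: any translate of $V'$ through $H+p$ is a translate of $V\supseteq V'$ through $H+p$, hence equals $H+p$. Applying the inductive hypothesis to $V'$ therefore yields $\deg H\le(\deg V')^{2^{\dim V'}}\le\bigl((\deg V)^2\bigr)^{2^{d-1}}=(\deg V)^{2^{d}}$.

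Finiteness of the set of abelian subvarieties $H$ that occur is then immediate: we have bounded $\deg H$ by $(\deg V)^{2^{\dim V}}$ uniformly over all maximal translates, and there are only finitely many abelian subvarieties of $E^N$ of bounded degree, since (Section~\ref{torsione}) such a subvariety is cut out by a matrix over $\mathrm{End}(E)$ whose row norms are controlled by the degree. The point requiring the most care is the degree bookkeeping: it is essential to use the B\'ezout inequality in the form $\sum_j\deg Z_j\le\deg X\,\deg Y$ for the components $Z_j$ of $X\cap Y$ in $\mathbb{P}^m$, which carries \emph{no} constant depending on the embedding, so that the factor $2$ introduced at each inductive step is exactly absorbed by the doubly-exponential exponent; a secondary, purely geometric point is the bound on $\deg\mathrm{Stab}(V)^0$, where one must check that boundedly many generic translates of $V$ already cut out the stabilizer.
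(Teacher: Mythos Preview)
The paper does not give its own proof of this lemma: it is stated as extracted from the proof of \cite{BZ}, Lemma~2, with the sentence ``As a corollary of their proof we obtain the following lemma'' and nothing further. So there is no argument in the paper to compare against; what can be said is whether your reconstruction is a faithful rendering of the Bombieri--Zannier method with the claimed degree bound.

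It is. The heart of the Bombieri--Zannier argument is exactly your Case~B: if $H\not\subseteq\mathrm{Stab}(V)$, pick $h\in H$ with $V+h\neq V$, pass to an irreducible component $V'$ of $V\cap(V+h)$ containing $H+p$, observe that $H+p$ remains a maximal translate in $V'$, and induct. The degree doubles and the dimension drops at each step, which is precisely what produces the exponent $2^{\dim V}$. Your verification that maximality persists in $V'$ and that the two cases $H\subseteq\mathrm{Stab}(V)^0$ versus $H\not\subseteq\mathrm{Stab}(V)$ are exhaustive (using that $H$ is connected) are both correct.

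The only point that is genuinely sketchy is Case~A, where you assert that $\mathrm{Stab}(V)$ is cut out by at most $d+1$ generic translates $V-x_i$. A Noetherianity argument gives \emph{some} finite number, and a generic intersection drops the dimension of the component through $0$ by at least one; but controlling the extra components so that a single further translate isolates $\mathrm{Stab}(V)$ is not automatic. This does not affect the correctness of the scheme, since (i) Case~A contributes a \emph{single} abelian subvariety $G=\mathrm{Stab}(V)^0$, so finiteness is unaffected, and (ii) any polynomial bound $\deg G\le(\deg V)^{c(d)}$ with $c(d)\le 2^d$ suffices, and such bounds are available (one may also sidestep Case~A entirely by first quotienting by $\mathrm{Stab}(V)^0$, as is sometimes done). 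But if you want the displayed inequality to hold on the nose, this step deserves a reference or a cleaner argument rather than the parenthetical sketch.
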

Using this lemma we obtain a bound which is more uniform, though the dependence on $\deg V$ shows a bigger exponent.

\bigskip

As remarked for the zero dimensional case, in  Theorem \ref{trasla} we proved finiteness  using the non effective Lemma \ref{sottogruppo}. We now give a completely effective result which is  the analogous of Theorem \ref{tadimzero2} in positive dimension. We bound the degrees of the fields of definition of the translates $H+p$ and of the torsion points $\zeta$.

\begin{thm}\label{trasla2}

Let $V$ be a weak-transverse subvariety of $E^N$, where $E$ has CM. Let $k$ be a field of definition for $E$.
 Let $H$ be an abelian subvariety of $E^N$, and let $p$ be a point such that $H+p$ is a maximal $V$-torsion anomalous translate of relative  codimension one; let  $B+\zeta$ be  minimal for  $H+p$.
Set $r=\codim B$; then the field $k(H+p)$ of definition of $H+p$  has degree bounded by 
\begin{multline*}
 [k(H+p):\mathbb{Q}]\ll_\eta [k(V):k]^{r+\eta} ({\deg V})^{3r-1}\cdot\\
\cdot (h(V)+\deg V)^{\frac{(2r-1)(r-\codim V+1)+r(r-1)}{\codim V-1}+\eta} {[k_\mathrm{tor}(V):k_\mathrm{tor}]}^{\frac{(3r-2)(r-\codim V +1)}{\codim V-1}}.
\end{multline*}

Moreover the torsion points $\zeta$  can be chosen so that
$$[k(\zeta):\mathbb{Q}]\ll_\eta  [k(H+p):\mathbb{Q}]$$ and $$\mathrm{ord}(\zeta)\ll_\eta [k(H+p):\mathbb{Q}]^{\frac{N}{2}+\eta}.$$
\end{thm}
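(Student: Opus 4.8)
The plan is to follow the strategy of Theorem~\ref{tadimzero2}, first reducing the positive–dimensional translate to a zero–dimensional problem exactly as in the proof of Theorem~\ref{trasla}. Write $H+p=H+Y_0$ with $Y_0$ in the orthogonal complement $H^\perp$ of $H$; by Lemma~\ref{minimoess} we have $\mu(Y_0)=\mu(H+Y_0)$, and by Lemma~\ref{wt} the translate $H+Y_0$ is weak-transverse in its minimal $B+\zeta$, so $Y_0$ is not torsion and its coordinates generate a rank–one $\mathrm{End}(E)$–module. From Theorem~\ref{trasla} the degree $\deg B$ is already bounded only in terms of $V$, $E^N$ and $\eta$, hence so are $\deg H$, $\deg H'$, $\sharp(H\cap H')$, $\deg(H+Y_0)$, $h(H+Y_0)$ and, by \eqref{t73bez2}, $[k_\mathrm{tor}(H+Y_0):k_\mathrm{tor}]$. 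Combining Zhang's inequality \eqref{zhang} with $\mu(Y_0)=\mu(H+Y_0)$ bounds $\hat h(Y_0)$ from above, while \eqref{t73conj} together with the bound on $[k_\mathrm{tor}(H+Y_0):k_\mathrm{tor}]$ bounds $[k_\mathrm{tor}(Y_0):k_\mathrm{tor}]$; these play, in the present setting, the role of \eqref{arbez61} in the zero–dimensional case. Since $k(H+p)\subseteq k(Y_0)$ and $[k(Y_0):k]\ll[k(H+p):k]\,\sharp(H\cap H^\perp)\ll[k(H+p):k](\deg H)^2$, it suffices to bound $[k(Y_0):k]$.

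Next I would repeat the Siegel's lemma construction of the proof of Theorem~\ref{tadimzero2} with $Y_0$ in the role of the torsion anomalous point. Writing the coordinates of $Y_0$ as $x_i=\alpha_ig+\beta_iT$ with $g\in E$ of infinite order, $\hat h(x_i)=|N_L(\alpha_i)|\hat h(g)$ and $N_L(\beta_i)\ll R^2$ (where $R$ is the order of $T$), Siegel's lemma over the order $\mathcal{O}$ of $\mathrm{End}(E)$, as in \cite{BiGi}, produces an algebraic subgroup $G$ \emph{defined over $k$}, passing through $Y_0$, of codimension $\dim V$, with $\deg G\ll\big((\max_iN_L(\alpha_i))R^2\big)^{\frac{\dim V}{N-1}}$. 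Arranging $H\cap G$ to be finite, the $k$–rational subgroup $H+G$ has codimension $r_1:=\dim V-\dim H$ and degree $\ll\deg H\,\deg G$, it contains $H+Y_0$ (since $Y_0\in G$), and — using that $H+Y_0$ is already a component of $V\cap A_0$, with $A_0$ the codimension $r_1$ torsion variety of the proof of Theorem~\ref{trasla}, and that it is a maximal translate in $V$ by Lemma~\ref{maximal_trans} — one checks that $H+Y_0$ is a component of $V\cap(H+G)$. Then all its $\Gal(\overline k/k)$–conjugates $H+\sigma(Y_0)$ are distinct components of this $k$–rational intersection, and B\'ezout's theorem gives
\[
[k(H+Y_0):k]\ll [k(V):k]\,\deg V\,\deg(H+G)\ll [k(V):k]\,\deg V\,\deg H\,\deg G .
\]

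To close the estimate, bound $\max_iN_L(\alpha_i)\le\hat h(Y_0)/\hat h(g)$ with $\hat h(Y_0)$ already bounded, and $1/\hat h(g)\ll_\eta[k_\mathrm{tor}(g):k_\mathrm{tor}]^{1+\eta}\le[k_\mathrm{tor}(Y_0):k_\mathrm{tor}]^{1+\eta}$ by Theorem~\ref{carri2} applied to $g$ in $E$, which is bounded; what remains is a power of $R$, for which Serre's estimate \cite{serre} gives $R^{2-\eta}\ll_\eta\varphi(R)\varphi(R')\ll[k(Y_0):k]$. Feeding everything back, and using $[k(Y_0):k]\ll[k(H+p):k](\deg H)^2$, one obtains a self–referential inequality of the shape $[k(Y_0):k]\ll_\eta C\,[k(Y_0):k]^{\theta+\eta}$, where $C$ is a product of $\deg V$, $[k(V):k]$, $\deg H$ and the intermediate bounds of Theorem~\ref{trasla}, and $\theta<1$ because the relative codimension being one forces $\dim V<N-1$; solving it exactly as in the passage from \eqref{erreesse2} to \eqref{bY0}, and propagating the bounds for $\deg B$, $\deg H$ and $h(H+p)$ from Theorem~\ref{trasla}, yields the stated bound for $[k(H+p):\mathbb{Q}]$. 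For the torsion point $\zeta$ one argues as at the end of the proof of Theorem~\ref{tadimzero2}: choosing $\zeta$ in a complement $B'$ of $B$ with $\sharp(B\cap B')\ll1$ (by \cite{bertrand}), the field $K$ of definition of $B+\zeta$ satisfies $[k(\zeta):K]\ll1$ and $K\subseteq k(H+p)$ by the minimality of $B+\zeta$, so $[k(\zeta):\mathbb{Q}]\ll[k(H+p):\mathbb{Q}]$; then $\varphi(\mathrm{ord}(\zeta))\ll[k(\zeta):\mathbb{Q}]$ and $\varphi(\mathrm{ord}(\zeta))\gg_\eta\mathrm{ord}(\zeta)^{1-\eta}$ give $\mathrm{ord}(\zeta)\ll_\eta[k(H+p):\mathbb{Q}]^{\frac N2+\eta}$.

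The main obstacle is the middle step: one must set up the Siegel construction so that the \emph{positive–dimensional} translate $H+Y_0$ — not merely a point — is genuinely cut out as a component of a $k$–rational intersection of the expected codimension (equivalently, that $H\cap G$ can be kept finite while $\deg G$ stays controlled), and then track carefully how $\deg H$, $\sharp(H\cap H^\perp)$ and the intermediate degree and field bounds of Theorem~\ref{trasla} propagate through the self–referential inequality to produce the precise exponents in the statement. Once this bookkeeping is carried out, the argument is, step by step, the same combination of the Arithmetic B\'ezout theorem, Zhang's inequality, Carrizosa's relative Lehmer bound and Serre's estimate used in Theorems~\ref{tadimzero} and \ref{tadimzero2}.
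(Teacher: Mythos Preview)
Your overall plan is right and mirrors the paper's, but the obstacle you flag at the end is precisely where your construction diverges from the paper, and you have not actually resolved it. Taking $G$ of codimension $\dim V$ exactly as in Theorem~\ref{tadimzero2} gives no control on $H\cap G$: the $\dim V$ Siegel forms could all vanish on a positive-dimensional subgroup of $H$, so $H+G$ need not have codimension $r_1$, and the component argument collapses. Saying ``arranging $H\cap G$ to be finite'' is not a step but the whole difficulty.

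The paper resolves this by constructing $G$ of dimension $2$ \emph{inside} $H^\perp$ rather than of codimension $\dim V$. Concretely, to the two Siegel equations coming from $g$ and $T$ it adjoins the $N-r-1$ linear forms $\lambda_1,\dots,\lambda_{N-r-1}$ that cut out $H^\perp$, giving a system of $N-r+1$ equations in $N+1$ unknowns; Siegel then yields $r-1$ new forms which, together with the $\lambda_i$, are the $N-2$ equations of $G$. Since $G\subseteq H^\perp$ by construction, $H\cap G\subseteq H\cap H^\perp$ is automatically finite and $\dim(H+G)=\dim H+2=\dim B+1$; thus any component of $V\cap(H+G)$ strictly larger than $H+Y_0$ is anomalous, and maximality gives the component claim with no extra hypothesis. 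This design choice also fixes the self-referential exponent: the paper gets
\[
\deg G\ll_\eta(\deg H)^{2-\frac{1}{r}}\,\hat h(Y_0)^{\frac{r-1}{r}}\,[k(Y_0):k]^{\frac{r-1}{r}+\eta},
\]
so after B\'ezout on $V\cap(H+G)$ the exponent of $[k(Y_0):k]$ on the right is $(r-1)/r$, not your $\dim V/(N-1)$. Solving and inserting the bounds of Theorem~\ref{trasla} for $\deg H$ and $h(H+Y_0)$ is what produces the specific powers $[k(V):k]^{r}$, $(\deg V)^{3r-1}$, etc., in the statement; your construction would not reproduce them. Your treatment of $\zeta$ at the end is correct and matches the paper.
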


\begin{proof}
We  keep all the notations and definitions used in Theorem \ref{trasla}.

Of course 
\begin{equation*}
[k(H+p):k]=[k(H+Y_0):k]\leq [k(Y_0):k],
\end{equation*}
because we are assuming all abelian subvarieties  of $E^N$ to be defined over $k$.

The bound on the degree $[k(Y_0):k]$ is obtained following the same idea of Theorem \ref{tadimzero2}: since $H_0$ has dimension one, the  group generated by the coordinates of $Y_0$ is an $\mathrm{End}(E)$-module of rank one.
We can apply Siegel's lemma in a similar way to the zero-dimensional case,  where we use 
 the estimate  \eqref{n7t}  for the height of $Y_0$.  In this case, however, we want to find an algebraic subgroup $\sbgrpGG$ of dimension 2, containing $Y_0$, and contained in $H^\perp$.

\bigskip

We know that $\mathrm{End}(E)$ is an order in an imaginary quadratic field $L$.
Let $\lambda_1,\dotsc, \lambda_{N-r-1}$ be linear forms which give equations for $H^\perp$, and define
\begin{equation*}
\delta_i=\max_j |N_L(l_{ij})|,
\end{equation*}
where $(l_{ij})_{j}$ is the vector of coefficients of the form $\lambda_i$. We now follow the steps of the proof of Theorem \ref{tadimzero2} and apply Siegel's lemma to obtain $r-1$ independent solutions which are also orthogonal to the vectors of coefficients of $\lambda_1,\dotsc, \lambda_{N-r-1}$; this time, in addition to the two equations of Theorem \ref{tadimzero}, we have also one equation for each of the $\lambda_i$, for a total of $N-r+1$ equations in $N+1$ unknowns ($N$ coefficients and one for the torsion point).

These $r-1$ vectors give $r-1$ linear forms which, together with the $\lambda_1,\dotsc, \lambda_{N-r-1}$, provide the $N-2$ equations needed to define $\sbgrpGG$. 

The bounds provided by Siegel's lemma give
\begin{align}
 \deg \sbgrpGG&\ll_\eta \left(\prod_{i=1}^{N-r-1}\delta_i \right)\left(\hat{h}(Y_0)[k(Y_0):k]^{1+\eta} \prod_{i=1}^{N-r-1}\delta_i\right)^\frac{r-1}{r}\ll_\eta\label{degG75}\\
&\ll_\eta (\deg H)^{2-\frac{1}{r}}\hat{h}(Y_0)^\frac{r-1}{r}[k(Y_0):k]^{\frac{r-1}{r}+\eta}.\notag
\end{align}

Now that we have found $\sbgrpGG$, we  first show that $H+Y_0$ is a component of $V\cap (H+\sbgrpGG)$:
indeed $\dim (H+\sbgrpGG)=\dim B +1$, because $\sbgrpGG\subseteq H^\perp$  has dimension 2; therefore any component of the intersection $V\cap (H+\sbgrpGG)$ with dimension greater than $\dim H$ is anomalous. But $H+Y_0$ is clearly contained in $V\cap (H+\sbgrpGG)$ and,  by the maximality of $H+Y_0$, it is not contained in any $V$-anomalous subvariety of greater dimension; hence it is itself a component of $V\cap (H+\sbgrpGG)$  and so are all its conjugates over $k(V)$.

Applying B\'ezout's theorem to $V\cap (H+\sbgrpGG)$ we get
\begin{align}
 \frac{[k(Y_0):k]}{[k(V):k]}\deg H &\leq \deg V\deg(H+\sbgrpGG)\ll\deg V\deg H\deg \sbgrpGG,\notag\\
\label{bezoutin7.5} [k(Y_0):k] &\ll [k(V):k]\deg V\deg \sbgrpGG,
\end{align}

 and this gives the desired bound.

 Substituting \eqref{degG75} in \eqref{bezoutin7.5}, and using all the bounds from Theorem \ref{trasla} we obtain 
 \begin{equation*}
 \begin{split}
[k(Y_0):\mathbb{Q}]\ll_\eta [k(V):k]^{r+\eta} ({\deg V})^{3r-1} (h(V)+\deg V)^{\frac{(2r-1)r_1+r(r-1)}{\codim V-1}+\eta}\cdot\\
\cdot {[k_\mathrm{tor}(V):k_\mathrm{tor}]}^{\frac{(3r-2)r_1}{\codim V-1}}.
\end{split}
\end{equation*}

 Finally, as in the proof of Theorem \ref{tadimzero}, $\zeta$ can be chosen so that  $[k(\zeta):\mathbb{Q}] \ll  [k(Y_0): \mathbb{Q}]$; again by Serre's theorem,   the points $\zeta$ have order bounded by $\mathrm{ord}(\zeta)\ll_\eta [k(Y_0):\mathbb{Q}]^{\frac{N}{2}+\eta}$ and therefore belong to an  explicit  finite set. 

 Finally, the set of all possible translates $H+p$ has cardinality $S$ bounded by

\begin{equation}
 S\ll_\eta [k(V):k]^{D_1 + \eta} ({\deg V})^{D_2}(h(V)+\deg V)^{D_3+\eta} {[k_\mathrm{tor}(V):k_\mathrm{tor}]}^{D_4}
\end{equation}
where
\begin{align*}
D_1&=\frac{rN(2N+1)}{2}\\
D_2&=\frac{(3r-1)N(2N+1)}{2}+1\\
D_3&=\frac{(N+1)r}{\codim V -1} + \frac{N(2N+1)}{2}\frac{(2r-1)(r-\codim V +1)+r(r-1)}{\codim V -1}\\
D_4&=\frac{(N+1)r}{\codim V -1} +\frac{N(2N+1)}{2}\frac{(3r-2)(r-\codim V +1)}{\codim V-1}.\qedhere
\end{align*}
\end{proof}

\section*{Acknowledgements}
We warmly thank the referee for a  thorough report, precious comments and for the possibility to give some more details. We wish to thank P. Philippon for useful discussions and for {the proof of Lemma \ref{minimoess}.} We are grateful to U. Zannier for his valuable comments and suggestions on an early version of this paper. We  kindly thank A. Galateau for his communication of the constants in \cite{galateau}.  We thank D. Bertrand,  P. Habegger, M. Hindry and D.~Masser for providing us useful references.

We warmly thank the FNS for the great support  we receive in doing  research in Mathematics.
E. Viada also likes to thank the AIM and the organizers J. Pila and U.~Zannier for supporting her participation to a conference on unlikely intersections  in Pisa, where this work was inspired.
S. Checcoli and F. Veneziano would also like to address a special thank to their advisor U. Zannier for introducing them to mathematical research.

\def\cprime{$'$}
\providecommand{\bysame}{\leavevmode\hbox to3em{\hrulefill}\thinspace}
\providecommand{\MR}{\relax\ifhmode\unskip\space\fi MR }
\providecommand{\MRhref}[2]{%
  \href{http://www.ams.org/mathscinet-getitem?mr=#1}{#2}
}
\providecommand{\href}[2]{#2}

 \end{document}